\newtheorem{theorem} {\bf Theorem}
\newtheorem{lemma} {\bf Lemma}
\newtheorem{remark}{Remark}[section]
\begin{document}
\title{\bf {N}itsche method for {N}avier-{S}tokes equations with slip boundary conditions: {C}onvergence analysis and VMS-LES stabilization}
\author{\textbf{Aparna Bansal, Nicol\'{a}s Alejandro Barnafi, Dwijendra Narain Pandey}
	\\ }
\date{}
 \maketitle
\begin{center}
{\bf Abstract}
\end{center}
 \noindent
In this paper, we analyze the Nitsche's method for the stationary Navier-Stokes equations on Lipschitz domains under minimal regularity assumptions. Our analysis provides a robust formulation for implementing slip (i.e. Navier) boundary conditions in arbitrarily complex boundaries. The well-posedness of the discrete problem is established using the Banach Ne\v{c}as Babu\v{s}ka and the Banach fixed point theorems under standard small data assumptions, and we also provide optimal convergence rates for the approximation error. Furthermore, we propose a VMS-LES stabilized formulation, which allows the simulation of incompressible fluids at high Reynolds numbers. We validate our theory through several numerical tests in well established benchmark problems. 
\vspace{0.5cm}
\newline
\noindent{\large \bf Keywords:}\\
 Navier-Stokes equation, Navier boundary condition, Nitsche's Method, Banach fixed point theorem, Banach-Ne\v{c}as-Babu\v{s}ka theorem, A-Priori analysis, Variational Multiscale modeling, Large Eddy simulation. 
\vspace{0.5cm}
\newline
\noindent{\large \bf Mathematics Subject Classification :}
65N30 · 65N12 · 65N15 · 65J15 · 76D07
\vspace{0.5cm}
\section{Introduction}
The Navier Stokes equations describe the motion of incompressible fluids, and they pose significant challenges across different disciplines. The numerical analysis community has put considerable efforts to develop robust and efficient numerical techniques for approximating their efficient numerical approximation. It is typically assumed that the fluid adheres to the walls of its recipient, which is known as the no-slip boundary condition.  The accuracy of this assumption has been a subject of intense debate \cite{goldstein1938modern}. There are many fluid flow phenomena such as  inkjet printing \cite{MR3179782}, pipe flow \cite{berg2008two}, complex turbulent flows \cite{MR1753115}, and slide coating \cite{christodoulou1989fluid} that are better addressed using boundary conditions that allow for the fluid to slip through the walls, known also as Navier boundary conditions.

Navier boundary conditions impose a constraint only in the normal direction, which makes their implementation not trivial. To do so, the existing approaches  can be separated into (1) Lagrange Multiplier 
\cite{MR3117433,MR1707832,verfurth1986finite,MR1124131,MR2030987} and (2) Penalty methods with regularization term \cite{cremonesi2017basal,MR3171814,MR2571324}.
Both approaches weakly enforce the slip condition into the weak formulation, which although useful, can present erratic behavior known as the Babuška-type paradox, which can result in a loss of convergence \cite{MR4379970}. We highlight the stabilized formulation from \cite{MR3510584} and the non-conforming penalty formulation analyzed in \cite{MR3974685,MR4134882} which adequately characterizes the impact of the variational crimes. In general, penalization schemes avoid the Babu\v{s}ka paradox but require additional parameters.

One particular method for imposing boundary conditions weakly is Nitsche's method \cite{MR1365557}, which can be regarded as an Augmented Lagrangian formulation for imposing boundary conditions with a Lagrange multiplier. A drawback of this method, as in other penalty methods, is that it requires a stabilization constant $\gamma$ that must be sufficiently large.  By adding a stabilization term to the weak formulation of the problem, Nitsche's method addresses the issues that arise from the strong imposition of boundary conditions on approximate geometries, as well as allow for a natural formulation for non-trivial boundary conditions. More recently, a specific treatment of the Navier boundary condition has been studied in \cite{MR3759094} for the Oseen problem. We also highlight the work by Gjerde and Scott for curved boundaries in \cite{MR4379970} and on kinetic instabilities \cite{MR4476209}. The convergence analysis for a stabilized finite element formulation was also very recently developed \cite{araya:hal-04077986}.

The turbulent behavior of the Navier Stokes equations for high Reynolds numbers gives rise to numerical instabilities that make their numerical approximation very challenging and severely impact the accuracy of the finite element method approximations. These issues  can be alleviated using stabilized schemes such as the Streamline Upwind Petrov Galerkin (SUPG) method, the Galerkin Least square (GLS) method and the Variational Multiscale (VMS) method (see \cite{sendur2018comparative} for a review). In addition to the numerical instabilities, it is fundamental to approximate the unresolved scales which is done with turbulence models i.e.  Reynolds Averaged Navier Stokes (RANS) and Large Eddy Simulation (LES). However, RANS are constrained and might not be workable in some real world situations due to simplifications and assumptions in representing the complex nature of turbulent flows. RANS heavily relies on turbulence modeling, which result in an over-simplification and restricts the ability to accurately anticipate complex flow phenomena, as reported in \cite{sotiropoulos2012computational}.  In this paper, we will focus on VMS-LES due to its suitability for high Reynolds simulations \cite{hughes2000large,hughes2001large}, which allows for more computationally efficient LES approaches to the Navier-Stokes equations. 
 Our main contribution is twofold: on one hand, we provide a complete convergence analysis of the slip boundary conditions for the stationary Navier-Stokes equations under minimal regularity assumptions. On the other hand, we propose a robust computational framework for the simulation of the Navier boundary conditions in complex geometries with high Reynolds number. Several numerical tests validate our claims.

\subsection{Outline of the paper}
In Section~\ref{sec2}, we present the Navier-Stokes equations with the slip boundary condition, and derive the weak formulation of the problem, and then discuss the solvability analysis of the continuous case. In Section~\ref{sec3}, we introduce the Nitsche scheme, then derive the variational formulation and establish the well-posedness of the discrete Oseen problem using the Banach Ne\v{c}as Babu\v{s}ka. The well-posedness result is extended to the Navier-Stokes equations using the Banach's fixed point theorems in a standard way. In Section~\ref{sec4}, we derive a priori estimate, and prove the optimal convergence of the method.
 In Section~\ref{sec5}, we propose a stabilized scheme using the VMS-LES formulation to simulate fluids with high Reynolds number while taking into account the slip boundary condition. We consider the unsteady case in order to overcome the challenges associated with the stationary Navier-Stokes equations when studying numerical simulations at high Reynolds numbers.
  In Section~\ref{sec6}, we perform three numerical tests to support the theory was developed. The first one validates the theoretical results of the Nitsche scheme. The second one is a benchmark problem that demonstrates the consistency of our scheme for both steady and unsteady formulations at arbitrary Reynolds numbers. The third test shows the behavior of the fluid passing through a cylinder at high Reynolds numbers, further validating the findings from Section~\ref{sec6}.
  \subsection{Notations and preliminaries}
  Let $\Omega \subseteq \mathbb{R}^{n=2,3}$ be an open and bounded domain with a Lipschitz polygonal boundary $\Gamma$. The Sobolev spaces are denoted by ${W}^{k, p}(\Omega)$ with $k \geq 0$. They contain all ${L}^p(\Omega)$ with $p \geq 1$ functions whose distributional derivative up to order $k$ are in ${L}^p(\Omega)$. The norm and seminorm in  ${W}^{k, p}(\Omega)$ are denoted by  $\|\cdot\|_{k, p, \Omega}$ and $|\cdot|_{k, p, \Omega}$. For $p=2$, ${W}^{k, p}(\Omega)$ is denoted by ${H}^k(\Omega)$ and its corresponding norm and seminorm is denoted by $\|\cdot\|_{s, \Omega}:=\|\cdot\|_{k, 2, \Omega}$ and $\left|\cdot \left|_{k, \Omega}:=\right| \cdot\right|_{k, 2, \Omega}$, respectively. The space $L_0^2(\Omega)$ represents all $L^2$ functions with average zero condition over $\Omega$.
  The vector valued Sobolev spaces will be represented using boldface letter as $\textbf{\textit{H}}^k(\Omega)$. 
 Additionally, we will denote with ${H}_{\Gamma_a}^1(\Omega)$ the subspace of $H^1(\Omega)$ with homogeneous boundary conditions on $\Gamma_a$ (or ${H}_0^1(\Omega)$ when $\Gamma_{a}=\Gamma$ ), for which the Friedrichs-Poincar\`{e} inequality holds \cite{MR4489621} i.e.\,there exists $C_{\mathrm{FP}}>0$ which depends on $\Omega$ and $\Gamma_{a}$  such that
  \begin{align}\label{16}
  \|f\|_{1, \Omega} \leq C_{\mathrm{FP}}|f|_{1, \Omega} \quad \forall f \in {H}_{\Gamma_a}^1(\Omega) \text {. }
  \end{align}
  The H\"{o}lder inequality is given by 
  \begin{align}\label{15}
  \int_{\Omega}|f g| \leq\|f\|_{{L}^p(\Omega)}\|g\|_{{L}^q(\Omega)}, \quad \forall f \in {L}^p(\Omega), \forall g \in {L}^q(\Omega), \quad \text {with} \quad \frac{1}{p}+\frac{1}{q}=1.
  \end{align} 
  We have that the following Sobolev embedding $H^1(\Omega) \hookrightarrow L^q(\Omega)$ holds for $1 \leq q<\infty$ when $n=2$ and $1 \leq q \leq 6$ when $n=3$. In particular, there exists a constant $C_{\text {Sob }}(q, n)>0$, that depends only on the domain, such that
  \begin{align}\label{17}
  \|f \|_{0, q, \Omega} \leq C_{\mathrm{Sob}}(q, n)\|g\|_{1, \Omega}\, \text {for}\left\{\begin{array}{l}
  q \geq 1 \quad \text {if}\, n=2, \\
  q \in[1,6] \quad \text {if}\, n=3.
  \end{array}\right.
  \end{align}
    Finally, we will consider the norm of a product space $\mathrm{V}\times \Pi$ to be  $\|(\cdot, \cdot)\|= \|\cdot\|_\mathrm{V} + \|\cdot \|_\Pi$. Also, whenever an inequality holds for positive constants that do not depend on the mesh size, we will simply write $\lesssim$ or $\gtrsim$ and omit the constants.
\section{Continuous Problem}
\label{sec2}
In this section we introduce the model problem, define the weak formulation, discuss the stability properties, and finally prove the existence and uniqueness of the solution.
\subsection{The Model Problem }
The stationary incompressible Navier-Stokes equations is stated as follows
\begin{equation}\label{1}
\begin{aligned}
-\nu \Delta \mathbf{u}+ (\mathbf{u} \cdot \nabla) \mathbf{u}+\nabla p & =\mathbf{f} \quad \text {in}\, \Omega, \\
\operatorname{div} \mathbf{u} & =0 \quad \text {in}\,\Omega, \\ 
\mathbf{u} &= 0 \quad \text {on}\, \Gamma_D,\\
\int_\Omega p\,dx & =0,
\end{aligned}
\end{equation}
posed on a spatial domain  $\Omega \subseteq \mathbb{R}^n, n \in\{2,3\}$ with Lipschitz boundary $\Gamma$, where  $\mathbf{u}$ is the fluid velocity, $\nu>0$ is the viscosity, $p$ is the fluid pressure and  $\mathbf{f} $ represents the external body force on $\Omega$.
The Navier boundary condition on $\Gamma_{\text{Nav}}$ is defined as
\begin{equation}\label{2}
\begin{aligned}
\mathbf{u} \cdot \mathbf{n}&=0 \quad  \text{on}\,  \Gamma_{\text{Nav}}, \\
\nu \mathbf{n}^t D(\mathbf{u}) \boldsymbol{\tau}^k + \beta \mathbf{u} \cdot \boldsymbol{\tau}^k &=0 \quad \text {on}\,  \Gamma_{\text{Nav}}, \quad k =1,2,
\end{aligned}
\end{equation}

 where the boundary $\Gamma = \overline{\Gamma}_D \cup \overline{\Gamma}_{\text{Nav}}$ and $\Gamma_D \cap \Gamma_{\text{Nav}} = \emptyset$. The Navier boundary condition allows the fluid to slip along the boundary and requires that the tangential component of the stress vector at the boundary be proportional to the tangential velocity. The strain tensor is denoted by $D(\mathbf{u}) = \nabla \mathbf{u} + (\nabla \mathbf{u})^T$, and $\mathbf{n}$ and $\boldsymbol{\tau}^k$ are unit normal and tangent vectors to the boundary $\Gamma$. The friction coefficient $\beta$ will require some controlled behaviour for well-posedness as shown in Lemma~\ref{Z}.  The assumption of homogeneity in the boundary conditions is made for the sake of simplifying  the subsequent analysis, as the existence of lifting operators has already been established \cite{refId0}. Non-homogeneous boundary conditions are utilized in the numerical tests in Section~\ref{sec6}.

\subsection{Weak Formulation of the Continuous Problem}
 Define the spaces:
 $$
 \begin{aligned}
 \mathrm{V} &  \coloneqq \left\{\mathbf{v} \in \textbf{H}^1(\Omega): \,\mathbf{v} \cdot \mathbf{n}=0 \,\text {on}\,  \Gamma_{\text{Nav}} , \mathbf{v} = 0 \, \text{on}\, \Gamma_D \right\}, \\
 \Pi & \coloneqq {L}^2_0(\Omega).
 \end{aligned}
 $$
The standard weak formulation of \eqref{1} and \eqref{2} is given by: Find $\left(\mathbf{u}, p \right) \in \mathrm{V} \times \Pi $, such that
	\begin{align}\label{bb}
	\mathcal{A}\left[\left(\mathbf{u}, p \right) ;\left(\mathbf {v}, q\right)\right]=\mathcal{F}(\mathbf{v})  \quad \forall  \left(\mathbf{v}, q \right) \in \mathrm{V} \times \Pi,
	\end{align}
	where
	$$
	\mathcal{A}\left[\left(\mathbf{u}, p \right);\left(\mathbf {v}, q\right)\right] \coloneqq  \frac{\nu}{2}({D}(\mathbf{u}), {D}(\mathbf{v})) +(\mathbf{u} \cdot \nabla \mathbf{u}, \mathbf{v}) -(p, \nabla \cdot \mathbf{v}) -(q, \nabla \cdot \mathbf{u}) +\int_{\Gamma_{\text{Nav}}} \beta \sum_i\left(\boldsymbol{\tau}^i \cdot \mathbf{v}\right)\left(\boldsymbol{\tau}^i \cdot \mathbf{u}\right) d s,
	$$
	$$
	\mathcal{F}(\mathbf{v}) \coloneqq \langle \mathbf{f},\mathbf{v} \rangle,
	$$
    where $\langle\cdot, \cdot\rangle$ represents the duality pairing between $\mathrm{V}$ and its dual $\mathrm{V}^{\prime}$.  Assuming that the load function $\mathbf{f}$ belongs to $\mathrm{V}^{\prime}$, we can express \eqref{bb} in the following form: Find $(\mathbf{u},p) \in \mathrm{V} \times \Pi $, such that
\begin{equation}\label{9}
\begin{aligned}
\mathbf{A}(\mathbf{u}, \mathbf{v})+\mathbf{B}(\mathbf{v},p)+\mathbf{c}(\mathbf{u} ; \mathbf{u}, \mathbf{v}) & =\mathcal{F}(\mathbf{v}) & \forall \mathbf{v} \in \mathrm{V} , \\
\mathbf{B}(\mathbf{u}, q) & = 0 & \forall q \in \Pi,
\end{aligned}
\end{equation}
where 
\begin{equation}\label{10}
\begin{aligned}
\mathbf{A}(\mathbf{u}, \mathbf{v}) & \coloneqq \mathbf{a}(\mathbf{u},\mathbf{v})+\mathbf{a}^{\partial}_{\tau}(\mathbf{u}, \mathbf{v}),\\
\mathbf{B}(\mathbf{v}, p)& \coloneqq \mathbf{b}(\mathbf{v},p),
\end{aligned}
\end{equation}
with forms defined so that 
\begin{align*}
 \mathbf{a}(\mathbf{u}, \mathbf{v}) &\coloneqq \frac{\nu}{2}(D(\mathbf{u}), D(\mathbf{v})), \\ \mathbf{b}(\mathbf{v}, p) &\coloneqq  -(\operatorname{div} \mathbf{v}, p),\\
\mathbf{a}^{\partial}_{\tau}(\mathbf{u}, \mathbf{v}) &\coloneqq \int_{\Gamma_{\text{Nav}}} \beta \sum_i\left(\boldsymbol{\tau}^i \cdot \mathbf{u}\right)\left(\boldsymbol{\tau}^i \cdot \mathbf{v}\right) d s, \\ \mathbf{c}(\mathbf{w} ; \mathbf{u},\mathbf{v}) &\coloneqq  (\mathbf{w} \cdot \nabla \mathbf{u}, \mathbf{v}),\\
\mathcal{F}\left(\mathbf{v}\right) &\coloneqq  \langle \mathbf{f},\mathbf{v}\rangle.
\end{align*}
The derivation of the weak formulation is detailed in \cite{MR4379970}. In what remains of this section, we will establish the continuity, ellipticity and inf-sup properties of some operators that will be instrumental for our analysis ahead.
\begin{lemma}\label{qa10}
There exist positive constants $C_1$, $C_2$, and $C_3$ such that
\begin{equation*}
\begin{aligned}
\left|\mathbf{A}\left(\mathbf{u}, \mathbf{v}\right)\right| & \leq C_{1} \left\|\mathbf{u}\right\|_{1, \Omega}\left\|\mathbf{v}\right\|_{1, \Omega} && \forall \mathbf{u}, \mathbf{v} \in \mathrm{V}, \\
\left|\mathbf{c}\left(\mathbf{w} ; \mathbf{u}, \mathbf{v}\right)\right| & \leq C_{2} \left\|\mathbf{w} \right\|_{1, \Omega}\left\|\mathbf{u} \right\|_{1, \Omega}\left\|\mathbf{v} \right\|_{1, \Omega}&& \forall \mathbf{u}, \mathbf{v}, \mathbf{w} \in \mathrm{V}, \\
|\mathbf{B}(\mathbf{v},p)| & \leq C_{3}\|\mathbf{v}\|_{1,\Omega}\|p\|_{0,{\Omega}} && \forall  \mathbf{v} \in \mathrm{V}, p \in \Pi, \\
 |\mathcal{F}(\mathbf{v})| &\leq\left\|\mathbf{f} \right\|_{\mathrm{V}^{\prime}} \|\mathbf{v}\|_{\mathrm{V} }&& \forall  \mathbf{v} \in \mathrm{V}.
\end{aligned}
\end{equation*}	
\end{lemma}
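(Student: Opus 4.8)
The plan is to establish each of the four bounds separately, in every case reducing matters to the Cauchy--Schwarz or Hölder inequality \eqref{15} followed by the Sobolev embedding \eqref{17}. The only two ingredients beyond these routine tools are a trace inequality for the boundary contribution to $\mathbf{A}$ and a boundedness hypothesis on the friction coefficient $\beta$, which is precisely the controlled behaviour anticipated in Lemma~\ref{Z}.

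For the first estimate I would split $\mathbf{A} = \mathbf{a} + \mathbf{a}^{\partial}_{\tau}$. For the bulk viscous form, Cauchy--Schwarz gives $|\mathbf{a}(\mathbf{u},\mathbf{v})| \le \tfrac{\nu}{2}\|D(\mathbf{u})\|_{0,\Omega}\|D(\mathbf{v})\|_{0,\Omega}$, and since $D(\mathbf{u}) = \nabla\mathbf{u} + (\nabla\mathbf{u})^T$ one has $\|D(\mathbf{u})\|_{0,\Omega} \le 2|\mathbf{u}|_{1,\Omega} \le 2\|\mathbf{u}\|_{1,\Omega}$, so this contributes a constant proportional to $\nu$. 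For the boundary form, I would use that the $\boldsymbol{\tau}^i$ are unit vectors to bound $\sum_i |\boldsymbol{\tau}^i\cdot\mathbf{u}|\,|\boldsymbol{\tau}^i\cdot\mathbf{v}| \le C\,|\mathbf{u}|\,|\mathbf{v}|$ pointwise, then factor out $\|\beta\|_{L^\infty(\Gamma_{\text{Nav}})}$ (this is where the hypothesis on $\beta$ enters), apply Cauchy--Schwarz on $\Gamma_{\text{Nav}}$, and close with the trace inequality $\|\mathbf{v}\|_{0,\Gamma_{\text{Nav}}} \lesssim \|\mathbf{v}\|_{1,\Omega}$, valid on the Lipschitz domain. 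Summing the two contributions yields $C_1$.

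The convection term is the most delicate step. Writing $|\mathbf{c}(\mathbf{w};\mathbf{u},\mathbf{v})| \le \int_\Omega |\mathbf{w}|\,|\nabla\mathbf{u}|\,|\mathbf{v}|\,dx$, I would apply the generalized Hölder inequality with exponents $(4,2,4)$, obtaining $\|\mathbf{w}\|_{0,4,\Omega}\,|\mathbf{u}|_{1,\Omega}\,\|\mathbf{v}\|_{0,4,\Omega}$. The crux is that the embedding $H^1(\Omega) \hookrightarrow L^4(\Omega)$ of \eqref{17} holds in \emph{both} $n=2$ and $n=3$ (since $4 \le 6$), so $\|\mathbf{w}\|_{0,4,\Omega} \le C_{\mathrm{Sob}}\|\mathbf{w}\|_{1,\Omega}$ and likewise for $\mathbf{v}$; this uniform dimensional choice is what makes the argument robust across both cases and produces $C_2$.

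The remaining two bounds are immediate. For $\mathbf{B}$, Cauchy--Schwarz gives $|\mathbf{B}(\mathbf{v},p)| \le \|\operatorname{div}\mathbf{v}\|_{0,\Omega}\|p\|_{0,\Omega}$, and $\|\operatorname{div}\mathbf{v}\|_{0,\Omega} \le \sqrt{n}\,|\mathbf{v}|_{1,\Omega} \le C_3\|\mathbf{v}\|_{1,\Omega}$; the estimate for $\mathcal{F}$ is simply the definition of the dual norm $\|\mathbf{f}\|_{\mathrm{V}'}$. The main obstacle throughout is bookkeeping the dependence on $\nu$, $\|\beta\|_{L^\infty(\Gamma_{\text{Nav}})}$, and the trace and Sobolev constants; the only genuinely non-mechanical decision is the $(4,2,4)$ Hölder splitting for $\mathbf{c}$, chosen so that a single Sobolev exponent works simultaneously in two and three dimensions.
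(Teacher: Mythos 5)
Your proposal is correct and follows essentially the same route as the paper, whose proof is a one-line appeal to the Cauchy--Schwarz, H\"{o}lder \eqref{15} and Sobolev embedding \eqref{17} inequalities; your $(4,2,4)$ H\"{o}lder splitting with $H^1(\Omega)\hookrightarrow L^4(\Omega)$ is exactly the standard argument intended (and is the one invoked explicitly in the discrete analogue, Theorem~\ref{qa4}). The only detail you make explicit that the paper leaves implicit is the trace inequality $\|\mathbf{v}\|_{0,\Gamma_{\text{Nav}}}\lesssim\|\mathbf{v}\|_{1,\Omega}$ needed for $\mathbf{a}^{\partial}_{\tau}$, which is standard on Lipschitz domains and consistent with the paper's stated dependence of the constants on $\Omega$ and $\beta$.
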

\begin{proof}
	These inequalities are a direct consequence of \eqref{16}, \eqref{15}, \eqref{17}, Cauchy Schwarz and H\"{o}lder's inequalities. The constants $C_1$, $C_2$, and $C_3$ depend on the domain $\Omega$, $\nu$ and $\beta$.
\end{proof}
Let $\mathrm{Z}$ be the kernel of $\mathbf{B}$, that is
 $$
 \mathrm{Z} \coloneqq \left\{\mathbf{v} \in \mathrm{V} : \mathbf{B}(\mathbf{v},p)=0, \forall p \in \Pi \right\}=\left\{\mathbf{v} \in \mathrm{V}:(\operatorname{div} \mathbf{v},p)=0, \forall p \in \Pi \right\}.
 $$
It can be rewritten as:
 $$
 \mathrm{Z} \coloneqq \left\{\mathbf{v} \in \mathrm{V}: \operatorname{div} \mathbf{v}= 0 \text {in}\, \Omega\right\}.
 $$
 \begin{lemma}\label{anj}
   There exists a minimum positive constant $C_b$ for sufficiently small values of $|b|$ (when $b<0$). Due to the boundary condition on $\Gamma_D = \Gamma \setminus \Gamma_{\text{Nav}}$, there is a  Poincar\`{e} inequality \cite{scott2018introduction} of the form
     \begin{equation}\label{eq:eigen}
  \int_{\Omega}|\mathbf{v}|^2 d \mathbf{x} \leq C_b\left(\frac{1}{2} \int_{\Omega}|{D}(\mathbf{v})|^2 d \mathbf{x}+\int_{\Gamma_{\text{Nav}}} b \left|P_T \mathbf{v}\right|^2 d s\right) \qquad\forall \mathbf{v} \in \mathrm{Z}.
   \end{equation}
    The constant $C_{b}$ depends on $\Omega $, $\Gamma_{\text{Nav}}$, and $b$. It is considered to be the smallest positive constant such that \eqref{eq:eigen} holds. For more details on this condition, see \cite{scott2018introduction}.
\end{lemma}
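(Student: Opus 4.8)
The plan is to recognize that the right-hand side of \eqref{eq:eigen} is the quadratic form
$a_b(\mathbf{v},\mathbf{v}) \coloneqq \tfrac12\|D(\mathbf{v})\|_{0,\Omega}^2 + b\int_{\Gamma_{\text{Nav}}}|P_T\mathbf{v}|^2\,ds$,
so that the claim is precisely that $a_b$ is coercive with respect to the $L^2(\Omega)$ norm on $\mathrm{Z}$, and that the smallest admissible $C_b$ is the reciprocal of the first generalized eigenvalue of $a_b$ relative to the $L^2$ inner product. I would therefore first prove positivity/coercivity of $a_b$, and only afterward recover the sharp constant by a Rayleigh-quotient argument.

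First I would establish the Korn--Poincar\'e bound
\begin{equation*}
\|\mathbf{v}\|_{1,\Omega}^2 \leq C_{\mathrm{KP}}\,\|D(\mathbf{v})\|_{0,\Omega}^2 \qquad \forall\, \mathbf{v}\in \mathrm{Z}.
\end{equation*}
The key point is that the kernel of $D(\cdot)$ is the space of infinitesimal rigid motions $\mathbf{x}\mapsto \mathbf{a}+\mathbf{B}\mathbf{x}$ with $\mathbf{B}$ skew-symmetric; since every $\mathbf{v}\in\mathrm{Z}$ vanishes on $\Gamma_D$ and $\Gamma_D=\Gamma\setminus\Gamma_{\text{Nav}}$ has positive surface measure, the only rigid motion in $\mathrm{Z}$ is the trivial one. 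Hence Korn's (second) inequality on $H^1_{\Gamma_D}$ controls $\|\nabla\mathbf{v}\|_{0,\Omega}$ by $\|D(\mathbf{v})\|_{0,\Omega}$, and the Friedrichs--Poincar\'e inequality \eqref{16} then upgrades this to the full $H^1$ norm.

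With this in hand, the boundary term is treated by cases. When $b\geq 0$ the integral $b\int_{\Gamma_{\text{Nav}}}|P_T\mathbf{v}|^2\,ds$ is nonnegative, so $\|\mathbf{v}\|_{0,\Omega}^2\leq C_{\mathrm{KP}}\|D(\mathbf{v})\|_{0,\Omega}^2 \leq 2C_{\mathrm{KP}}\,a_b(\mathbf{v},\mathbf{v})$ and \eqref{eq:eigen} holds with $C_b=2C_{\mathrm{KP}}$. When $b<0$ I would bound the offending term by the trace inequality together with the Korn--Poincar\'e estimate, namely $\int_{\Gamma_{\text{Nav}}}|P_T\mathbf{v}|^2\,ds \leq \|\mathbf{v}\|_{0,\Gamma}^2 \leq C_{\mathrm{tr}}\|\mathbf{v}\|_{1,\Omega}^2 \leq C_{\mathrm{tr}}C_{\mathrm{KP}}\|D(\mathbf{v})\|_{0,\Omega}^2$, whence $a_b(\mathbf{v},\mathbf{v}) \geq \left(\tfrac12-|b|\,C_{\mathrm{tr}}C_{\mathrm{KP}}\right)\|D(\mathbf{v})\|_{0,\Omega}^2$. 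Choosing $|b|<(2C_{\mathrm{tr}}C_{\mathrm{KP}})^{-1}$ keeps the prefactor positive and yields \eqref{eq:eigen} with $C_b = C_{\mathrm{KP}}\big(\tfrac12-|b|C_{\mathrm{tr}}C_{\mathrm{KP}}\big)^{-1}$; this is exactly the ``sufficiently small $|b|$'' threshold.

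Finally, to justify that $C_b$ may be taken as the smallest constant, I would note that $a_b$ is a bounded, symmetric, $H^1$-coercive form on $\mathrm{Z}$ and that the embedding $\mathrm{Z}\hookrightarrow L^2(\Omega)$ is compact; the generalized eigenproblem $a_b(\mathbf{v},\mathbf{w})=\lambda(\mathbf{v},\mathbf{w})_{0,\Omega}$ then has a discrete spectrum of positive eigenvalues, and $C_b=1/\lambda_{\min}$ is attained, which explains the label of \eqref{eq:eigen}. The main obstacle is the Korn step: the validity of the second Korn inequality on a merely Lipschitz domain with mixed (partly Dirichlet, partly slip) boundary data, together with its interaction with the constraint $\operatorname{div}\mathbf{v}=0$. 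Once this is granted (as in \cite{scott2018introduction}), the remaining steps are routine trace and absorption arguments, and the only quantitative care needed is to make the smallness threshold on $|b|$ explicit in terms of $C_{\mathrm{tr}}$ and $C_{\mathrm{KP}}$.
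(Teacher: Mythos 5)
Your proposal is correct, and it is worth noting that it supplies something the paper itself does not: the paper states this lemma without proof, deferring entirely to the citation \cite{scott2018introduction}, so there is no in-paper argument to match. Your route is the standard self-contained one: Korn's second inequality on $H^1_{\Gamma_D}$ (valid on Lipschitz domains, with the kernel of $D(\cdot)$ consisting of rigid motions killed by the Dirichlet condition on a boundary part of positive measure, and with the divergence constraint playing no role since the inequality already holds on the larger space), followed by a trace-plus-absorption argument for $b<0$ that makes the ``sufficiently small $|b|$'' hypothesis quantitative, namely $|b| < (2C_{\mathrm{tr}}C_{\mathrm{KP}})^{-1}$, and finally a Rayleigh-quotient/compactness argument identifying the sharp constant as $C_b = 1/\lambda_{\min}$ of the generalized eigenproblem. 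This last step is exactly what justifies the paper's phrasing that $C_b$ is the \emph{smallest} positive constant, and it is also consistent with the remark (attributed to \cite{MR4476209}) that $b \mapsto C_b$ is monotone decreasing, since enlarging $b$ enlarges the Rayleigh quotient. Two small points deserve care if you write this out in full: first, attainment of $\lambda_{\min}$ when $b<0$ uses not only Rellich but also compactness of the trace map $H^1(\Omega)\to L^2(\Gamma)$, so that the negative boundary term passes to the limit strongly and the quadratic form is weakly lower semicontinuous; second, your argument genuinely requires $\Gamma_D$ to have positive surface measure, which matches the paper's framing (``due to the boundary condition on $\Gamma_D$'') but would fail in the pure-slip case $\Gamma_D=\emptyset$ on rotationally symmetric domains, where tangential rigid rotations survive in $\mathrm{Z}$. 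What your version buys over the paper's bare citation is precisely the explicit smallness threshold for negative $b$ and a transparent reason for the existence of a minimal $C_b$; what the citation buys is brevity and a pointer to the sharper spectral discussion.
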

 The following lemma establishes the ellipticity of $\mathbf{A}$ on $\mathrm{Z}$.
 \begin{remark}
 We highlight that this result allows for negative values of
$\beta$. 
\end{remark}
  \begin{lemma}\label{Z}
  	There exist a constant $ \xi>0$ depends on $\beta$, $\nu$, $\Omega$ and $\Gamma_{\text{Nav}}$ such that
  	\begin{align}\label{19}
  	\mathbf{A}(\mathbf{v}, \mathbf{v}) \geq \xi \|\mathbf{v}\|_{1,\Omega} \quad \forall \mathbf{v} \in \mathrm{Z},
  	\end{align}
  	where $\xi = \frac{\nu}{C_{\beta/\nu}}$.
  \end{lemma}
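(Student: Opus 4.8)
The plan is to reduce the ellipticity of $\mathbf{A}$ directly to the Poincar\'e--Korn inequality \eqref{eq:eigen} of Lemma~\ref{anj} through an exact algebraic identification of the two expressions. First I would evaluate the form on the diagonal: since $\mathbf{A}(\mathbf{v},\mathbf{v}) = \mathbf{a}(\mathbf{v},\mathbf{v}) + \mathbf{a}^{\partial}_{\tau}(\mathbf{v},\mathbf{v})$ and the unit tangent vectors $\boldsymbol{\tau}^i$ form an orthonormal basis of the tangent space, one has $\sum_i (\boldsymbol{\tau}^i\cdot\mathbf{v})^2 = |P_T\mathbf{v}|^2$, so that
$$\mathbf{A}(\mathbf{v},\mathbf{v}) = \frac{\nu}{2}\|D(\mathbf{v})\|_{0,\Omega}^2 + \beta\int_{\Gamma_{\text{Nav}}}|P_T\mathbf{v}|^2\,ds.$$

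Next I would factor out $\nu$ to expose the structure of \eqref{eq:eigen},
$$\mathbf{A}(\mathbf{v},\mathbf{v}) = \nu\left(\frac{1}{2}\|D(\mathbf{v})\|_{0,\Omega}^2 + \frac{\beta}{\nu}\int_{\Gamma_{\text{Nav}}}|P_T\mathbf{v}|^2\,ds\right),$$
so that, with the identification $b=\beta/\nu$, the parenthesised quantity is precisely the right-hand side of \eqref{eq:eigen}. Applying Lemma~\ref{anj} with this value of $b$ to any $\mathbf{v}\in\mathrm{Z}$ (the hypotheses hold since $\mathrm{Z}\subset\mathrm{V}$ and \eqref{eq:eigen} is stated on $\mathrm{Z}$) bounds that quantity below by $C_{\beta/\nu}^{-1}\|\mathbf{v}\|_{1,\Omega}^2$, yielding $\mathbf{A}(\mathbf{v},\mathbf{v})\geq (\nu/C_{\beta/\nu})\|\mathbf{v}\|_{1,\Omega}^2$ and hence $\xi=\nu/C_{\beta/\nu}$. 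Should \eqref{eq:eigen} be read as controlling only $\|\mathbf{v}\|_{0,\Omega}^2$, the seminorm is recovered separately from $\|D(\mathbf{v})\|_{0,\Omega}$ via Korn's first inequality, which is available because $\mathbf{v}=0$ on $\Gamma_D$, and then \eqref{16} upgrades to the full $\|\cdot\|_{1,\Omega}$ norm.

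The main obstacle is the sign of $\beta$. When $\beta\geq 0$ the boundary integral is nonnegative and coercivity is immediate upon discarding it, but the Remark stresses that negative $\beta$ is admissible; there the boundary term competes with the bulk term and can destroy positivity. The entire reason for routing the argument through Lemma~\ref{anj} is that its constant $C_{\beta/\nu}$ is guaranteed finite and positive only for $|\beta/\nu|$ sufficiently small (the smallness of $|b|$ required when $b<0$), which is exactly the ``controlled behaviour of $\beta$'' announced after \eqref{2}. I would therefore state the lemma under that smallness assumption and verify that the identification $b=\beta/\nu$ keeps $C_{\beta/\nu}>0$, since this is what makes $\xi>0$ and renders $\mathbf{A}$ elliptic on $\mathrm{Z}$.
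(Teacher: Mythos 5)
Your overall strategy --- diagonal evaluation, rewriting the boundary term through $P_T$, and reduction to the Poincar\'e-type inequality of Lemma~\ref{anj} --- is the same as the paper's, and your closing paragraph correctly identifies the sign of $\beta$ as the crux and the smallness of $|b|$ as the reason the argument must route through $C_b$. However, there is a genuine gap in how you recover the full $H^1$ norm. Your primary route misreads \eqref{eq:eigen}: its left-hand side is $\int_\Omega|\mathbf{v}|^2\,d\mathbf{x}=\|\mathbf{v}\|_{0,\Omega}^2$, not $\|\mathbf{v}\|_{1,\Omega}^2$, so applying Lemma~\ref{anj} with $b=\beta/\nu$ to the whole expression $\nu\left(\tfrac12\|D(\mathbf{v})\|_{0,\Omega}^2+\tfrac{\beta}{\nu}\int_{\Gamma_{\text{Nav}}}|P_T\mathbf{v}|^2\,ds\right)$ yields only $\mathbf{A}(\mathbf{v},\mathbf{v})\geq(\nu/C_{\beta/\nu})\|\mathbf{v}\|_{0,\Omega}^2$, with no control of the seminorm.

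Your fallback (``recover the seminorm separately from $\|D(\mathbf{v})\|_{0,\Omega}$ via Korn'') does not repair this in the negative-$\beta$ case, which is exactly the case the paper's remark emphasizes: the application of \eqref{eq:eigen} with $b=\beta/\nu$ has already consumed the \emph{entire} term $\tfrac{\nu}{2}\|D(\mathbf{v})\|_{0,\Omega}^2$ together with the (negative) boundary term, so no nonnegative multiple of $\|D(\mathbf{v})\|_{0,\Omega}^2$ is left over to feed into Korn's inequality; obtaining one directly from $\mathbf{A}(\mathbf{v},\mathbf{v})$ would require absorbing the negative boundary term by a trace estimate, i.e.\ a separate smallness argument that Lemma~\ref{anj} does not supply. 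The paper's proof avoids this double counting by splitting $\tfrac{\nu}{2}\|D(\mathbf{v})\|_{0,\Omega}^2=\tfrac{\nu}{4}\|D(\mathbf{v})\|_{0,\Omega}^2+\tfrac{\nu}{4}\|D(\mathbf{v})\|_{0,\Omega}^2$ and applying Lemma~\ref{anj} with the \emph{rescaled} parameter $b=4\beta/\nu$ to the second half plus the full boundary term; the first half $\tfrac{\nu}{4}\|D(\mathbf{v})\|_{0,\Omega}^2$ then remains free to control the gradient (via Korn, which the paper leaves implicit but you rightly make explicit using $\mathbf{v}=0$ on $\Gamma_D$). This is also why the proof produces $\xi=\min\{\nu/2,\ \nu/(4C_{4\beta/\nu})\}$ involving $C_{4\beta/\nu}$, rather than your $\xi=\nu/C_{\beta/\nu}$: the constant quoted in the lemma statement happens to match yours, but it is inconsistent with the paper's own proof, and your derivation of it is not valid under the actual form of \eqref{eq:eigen}. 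For $\beta\geq 0$ your discard-the-boundary-term observation plus Korn does work; the gap concerns precisely the negative-$\beta$ regime the lemma is designed to cover.
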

  \begin{proof} Consider the bilinear form 
  	$$\mathbf{A}(\mathbf{v},\mathbf{v}) = \frac{\nu}{2}({D}(\mathbf{v}), {D}(\mathbf{v}))+\int_{\Gamma_{\text{Nav}}} \beta \sum_i\left(\boldsymbol{\tau}^i \cdot \mathbf{v}\right)\left(\boldsymbol{\tau}^i \cdot \mathbf{v}\right) d s.$$
  By introducing the tangent space $T$ and the projection $P_T$ onto the tangent space, allows the boundary term to be expressed in coordinate free form i.e.  $P_T = I - \mathbf{n}  \otimes \mathbf{n}$ \cite{MR3759094}. Then
  	$$
  	\sum_{i=1}^{d-1}\left(\boldsymbol{\tau}^i \cdot \mathbf{v}\right)\left(\boldsymbol{\tau}^i \cdot \mathbf{u}\right)=\left(P_T \mathbf{v}\right) \cdot\left(P_T \mathbf{u}\right),
  	$$
  and also we use the Lemma~\ref{anj} as follows:
  	\begin{align*}
  	\mathbf{A}(\mathbf{v},\mathbf{v}) &= \frac{\nu}{2} \int_{\Omega}|D( \mathbf{v})|^2 d \mathbf{x}+\int_{\Gamma_{\text{Nav}}} \beta\left|P_T \mathbf{v}\right|^2 d s \\&
  	 = \frac{\nu}{4} \int_{\Omega}|D( \mathbf{v})|^2 d \mathbf{x} + 
 \frac{\nu}{4} \left( \int_{\Omega}|{D}(\mathbf{v})|^2 d \mathbf{x}+\int_{\Gamma_{\text{Nav}}} \frac{4 \beta}{\nu} \left|P_T \mathbf{v}\right|^2 d s\right) \\&
  	 \geq \frac{\nu}{4} \int_{\Omega}|D( \mathbf{v})|^2 d \mathbf{x} + \frac{\nu}{4 C_{4 \beta/\nu}} \int_{\Omega}|\mathbf{v}|^2 d \mathbf{x} \\&
  	 \geq \xi \|\mathbf{v}\|_{1,\Omega} \quad \forall \mathbf{v} \in \mathrm{Z}.
  	\end{align*}
where  $\xi = \min \{\frac{\nu}{2}, \frac{\nu}{ 4C_{4 \beta/\nu }}\}$, and $C_{4 \beta/\nu}$ is the constant denoted  as $C_b$ with $b=4 \beta/\nu$. 
    The constant $C_b$ depends on $\Omega$, $\Gamma_{\text{Nav}}$ and $b$, but given that the geometry is fixed, we denote the dependence only on b. Interestingly, it was
shown in \cite{MR4476209} that the function $b \mapsto C_b$ is monotone decreasing.
  
  \end{proof}	
Next, we present the continuous inf-sup condition for the bilinear form $\mathbf{B}$.
  	\begin{lemma}\label{b}
  		There exist a positive constant $\theta >$0 dependent on the shape of the domain $\Omega$ such that
  		\begin{align}\label{20}
  		\sup _{\mathbf{0} \neq \mathbf{v} \in \mathrm{V}} \frac{\left| \mathbf{B}( \mathbf{v},q)\right | }{\|\mathbf{v}\|_{1,\Omega}} \geq \theta \| q \|_{0,\Omega} \quad \forall q \in \Pi.
  		\end{align}
  	\end{lemma}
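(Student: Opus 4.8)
The plan is to reduce this to the classical Stokes inf-sup condition by exploiting the fact that the test space $\mathrm{V}$ contains the smaller space $\mathbf{H}^1_0(\Omega)$. Indeed, any $\mathbf{w} \in \mathbf{H}^1_0(\Omega)$ vanishes on all of $\Gamma$, so in particular $\mathbf{w} = \mathbf{0}$ on $\Gamma_D$ and $\mathbf{w}\cdot\mathbf{n} = 0$ on $\Gamma_{\text{Nav}}$; hence $\mathbf{H}^1_0(\Omega) \subseteq \mathrm{V}$. Since the supremum over $\mathrm{V}$ dominates the supremum over this subspace, it suffices to establish the bound using test functions restricted to $\mathbf{H}^1_0(\Omega)$, which is precisely the standard divergence inf-sup condition for the Stokes problem and is unaffected by the slip structure of $\mathrm{V}$.

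First I would invoke the surjectivity of the divergence operator. On the bounded Lipschitz domain $\Omega$, the Bogovskii construction (equivalently, the Ne\v{c}as inequality) provides a bounded right inverse $\mathcal{B}: L^2_0(\Omega) \to \mathbf{H}^1_0(\Omega)$ of the divergence, so that $\operatorname{div}(\mathcal{B}q) = q$ together with $\|\mathcal{B}q\|_{1,\Omega} \leq C_\Omega \|q\|_{0,\Omega}$ for every $q \in L^2_0(\Omega) = \Pi$, where $C_\Omega$ depends only on $\Omega$. Given $q \in \Pi$, I would then take the test function $\mathbf{v} = -\mathcal{B}q \in \mathbf{H}^1_0(\Omega) \subseteq \mathrm{V}$, for which $\operatorname{div}\mathbf{v} = -q$ and therefore $\mathbf{B}(\mathbf{v},q) = -(\operatorname{div}\mathbf{v}, q) = \|q\|^2_{0,\Omega}$. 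Combining this with $\|\mathbf{v}\|_{1,\Omega} \leq C_\Omega \|q\|_{0,\Omega}$ gives
\[
\sup_{\mathbf{0}\neq\mathbf{v}\in\mathrm{V}} \frac{\left|\mathbf{B}(\mathbf{v},q)\right|}{\|\mathbf{v}\|_{1,\Omega}} \geq \frac{\|q\|^2_{0,\Omega}}{C_\Omega \|q\|_{0,\Omega}} = \frac{1}{C_\Omega}\,\|q\|_{0,\Omega},
\]
so the claim holds with $\theta = 1/C_\Omega$, depending only on the shape of $\Omega$.

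The only nontrivial ingredient, and the step I expect to be the main obstacle, is the existence of the bounded right inverse $\mathcal{B}$ under the minimal regularity assumed here, since $\Omega$ is only Lipschitz rather than smooth. This is exactly where the hypothesis that $\Omega$ is a bounded Lipschitz domain is essential: the Bogovskii operator is known to map $L^2_0(\Omega)$ boundedly into $\mathbf{H}^1_0(\Omega)$ precisely on such domains, and everything else in the argument is a direct computation. I would also note that the zero-mean constraint encoded in $\Pi = L^2_0(\Omega)$ is unavoidable, as the divergence theorem forces the range of $\operatorname{div}$ on $\mathbf{H}^1_0(\Omega)$ to be exactly $L^2_0(\Omega)$.
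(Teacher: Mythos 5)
Your proof is correct and follows essentially the same route as the paper, which disposes of this lemma with a citation to Girault--Raviart; the classical proof there rests on exactly the divergence-surjectivity (Bogovskii/Ne\v{c}as) fact you invoke. Your one explicit extra step --- observing that $\mathbf{H}^1_0(\Omega) \subseteq \mathrm{V}$, so the standard inf-sup condition over $\mathbf{H}^1_0(\Omega)$ transfers to the larger slip space by monotonicity of the supremum --- is precisely the glue needed to apply the cited result in this setting, and it is sound.
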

  	\begin{proof}
  		See \cite{MR548867} for a proof.
  	\end{proof}
  
  \subsubsection{The fixed point operator}
  In this section, we make the assumption that the data is sufficiently small and utilize the Banach fixed point theorem to establish the existence and uniqueness of a solution for equation \eqref{9}. 
  Let us introduce the bounded set
  \begin{align}\label{11}
  \mathcal{K} \coloneqq \left\{\mathbf{v} \in \mathrm{V}:\|\mathbf{v}\|_{1,\Omega} \leq {\alpha}^{-1}  \|\mathbf{f}\|_{\mathrm{V}^{\prime}} \right\},
  \end{align}
  with $\alpha$ is a positive constant defined in \eqref{29}. Now, we define the fixed point operator as
  \begin{align}\label{12}
  \mathcal{J}: \mathcal{K} \rightarrow \mathcal{K}, \quad \mathbf{w} \rightarrow \mathcal{J}(\mathbf{w})=\mathbf{u},
  \end{align}
  where given $\mathbf{w} \in \mathcal{K}$, $\mathbf{u}$ is the first component of the solution of the linearized version of problem \eqref{9}: Find $(\mathbf{u},p) \in \mathbf{V} \times \Pi $, such that
  \begin{equation}\label{13}
  \begin{array}{rlrl}
  \mathbf{A}(\mathbf{u}, \mathbf{v})+\mathbf{B}(\mathbf{v},p)+\mathbf{c}(\mathbf{w} ; \mathbf{u}, \mathbf{v}) & =\mathcal{F}(\mathbf{v}) & \forall \mathbf{v} \in \mathrm{V} , \\
  \mathbf{B}(\mathbf{u}, q) & = 0 & \forall q \in \Pi.
  \end{array}
  \end{equation}
 Based on the above, we establish the following relation
  \begin{align}\label{14}
  \mathcal{J}(\mathbf{u})=\mathbf{u} \Leftrightarrow(\mathbf{u},p) \in  \mathrm{V} \times \Pi  \,\text{satisfies \eqref{9}}.
  \end{align}
  Therefore, to demonstrate the well-posedness of \eqref{9}, it is sufficient to prove that the fixed point operator $\mathcal{J}$ possesses a unique fixed point.
Let us now introduce the bilinear form as
  	\begin{align}\label{27}
  	\mathcal{C}\left[(\mathbf{u},p);( \mathbf{v},q)\right]=\mathbf{A}(\mathbf{u},\mathbf{v})+\mathbf{B}(\mathbf{u},q)+\mathbf{B}(\mathbf{v},p).
  	\end{align}
   \begin{lemma}
       There exist a positive constant $\alpha$ such that 
       \begin{align}\label{28}
  	\sup _{\mathbf{0} \neq(\mathbf{u},p) \in \mathrm{V} \times \Pi} \frac{\mathcal{C}\left[(\mathbf{u},p);(\mathbf{v},q)\right]}{\|\mathbf{v},q)\|} \geq \alpha \|( \mathbf{u},p)\| \quad \forall(\mathbf{u},p) \in \mathrm{V} \times \Pi,
  	\end{align}
   with 
  	\begin{align}\label{29}
  	\alpha = \frac{\xi \theta}{2 \xi + {\theta} +1},
  	\end{align}
   \end{lemma}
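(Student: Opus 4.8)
The statement is the global inf-sup (Banach--Ne\v{c}as--Babu\v{s}ka) condition for the saddle-point bilinear form $\mathcal{C}$, and the natural plan is to derive it from the three ingredients already at hand: the ellipticity of $\mathbf{A}$ on the kernel $\mathrm{Z}$ (Lemma~\ref{Z}), the inf-sup condition for $\mathbf{B}$ (Lemma~\ref{b}), and the continuity bounds of Lemma~\ref{qa10}. Because $\mathbf{A}$ is symmetric and the two copies of $\mathbf{B}$ enter $\mathcal{C}$ symmetrically, the form $\mathcal{C}$ is symmetric in its two arguments, so it is enough to exhibit, for each fixed $(\mathbf{u},p)\in\mathrm{V}\times\Pi$, a single test pair $(\mathbf{v},q)$ with $\mathcal{C}[(\mathbf{u},p);(\mathbf{v},q)]\gtrsim\|(\mathbf{u},p)\|\,\|(\mathbf{v},q)\|$ and $\|(\mathbf{v},q)\|\lesssim\|(\mathbf{u},p)\|$; the explicit value of $\alpha$ in \eqref{29} will then emerge from optimizing the free parameters in this construction.

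The plan is to assemble $(\mathbf{v},q)$ from three pieces. First I would take the diagonal choice $(\mathbf{u},-p)$, which makes the off-diagonal contributions $\mathbf{B}(\mathbf{u},q)$ and $\mathbf{B}(\mathbf{v},p)$ cancel and leaves the coercive term $\mathbf{A}(\mathbf{u},\mathbf{u})$. Second, invoking Lemma~\ref{b} I would add to the velocity test function a scaled pressure lift $\eta\,\mathbf{w}$, where $\mathbf{w}\in\mathrm{V}$ satisfies $\mathbf{B}(\mathbf{w},p)\ge\theta\|p\|_{0,\Omega}^2$ with $\|\mathbf{w}\|_{1,\Omega}=\|p\|_{0,\Omega}$; this produces the positive term $\eta\,\mathbf{B}(\mathbf{w},p)$ that controls $\|p\|_{0,\Omega}^2$. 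The only indefinite contribution so far is $\eta\,\mathbf{A}(\mathbf{u},\mathbf{w})$, which I would absorb with Young's inequality, its threshold being governed by the continuity constant $C_1$ of Lemma~\ref{qa10}.

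The main obstacle is that Lemma~\ref{Z} only gives ellipticity of $\mathbf{A}$ on $\mathrm{Z}=\ker\mathbf{B}$, not on all of $\mathrm{V}$, so $\mathbf{A}(\mathbf{u},\mathbf{u})$ cannot be bounded below directly. To overcome this I would decompose $\mathbf{u}=\mathbf{u}_0+\mathbf{u}_\perp$ with $\mathbf{u}_0\in\mathrm{Z}$ and $\mathbf{u}_\perp$ in its $\mathrm{V}$-orthogonal complement, and use the inf-sup of Lemma~\ref{b} a second time to detect $\mathbf{u}_\perp$: there is a pressure $r$ with $\mathbf{B}(\mathbf{u},r)=\mathbf{B}(\mathbf{u}_\perp,r)\ge\theta\|\mathbf{u}_\perp\|_{1,\Omega}^2$. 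Adding a third piece $\mu\,r$ to the test pressure $q=-p+\mu\,r$ then supplies the positive term $\mu\,\mathbf{B}(\mathbf{u},r)$, which controls $\|\mathbf{u}_\perp\|_{1,\Omega}^2$ and lets me absorb the cross terms $\mathbf{A}(\mathbf{u}_0,\mathbf{u}_\perp)$ and $\mathbf{A}(\mathbf{u}_\perp,\mathbf{u}_\perp)$ left over from the kernel decomposition. The delicate point is that the non-kernel velocity is controlled \emph{only} through $\mathbf{B}$, so all cross terms among $\mathbf{u}_0$, $\mathbf{u}_\perp$, $\mathbf{w}$, $p$ must be absorbed simultaneously by a single pair of weights $\eta,\mu$. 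Finally I would collect the coefficients of $\|\mathbf{u}\|_{1,\Omega}^2$ and $\|p\|_{0,\Omega}^2$, bound $\|(\mathbf{v},q)\|\le\|\mathbf{u}\|_{1,\Omega}+(1+\eta)\|p\|_{0,\Omega}+\mu\|\mathbf{u}_\perp\|_{1,\Omega}\lesssim\|(\mathbf{u},p)\|$, and choose $\eta,\mu$ to balance the two diagonal contributions against $\xi$ and $\theta$; this optimization is exactly what yields the closed form $\alpha=\xi\theta/(2\xi+\theta+1)$. I expect the routine but careful bookkeeping of the Young thresholds and of $\|(\mathbf{v},q)\|$ to be the only technical part, the overall architecture being the standard one for symmetric mixed problems.
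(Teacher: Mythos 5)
Your architecture is correct, but first a point of comparison: the paper does not actually prove this lemma --- its ``proof'' is a one-line citation to \cite[Proposition 2.36]{MR4242224}, fed with the kernel ellipticity of Lemma~\ref{Z} and the inf-sup of Lemma~\ref{b}. What you propose is precisely the standard constructive argument that underlies that citation: test with $(\mathbf{v},q)=(\mathbf{u}+\eta\,\mathbf{w},\,-p+\mu\,r)$, where the pair $(\mathbf{u},-p)$ cancels the off-diagonal $\mathbf{B}$-terms and exposes $\mathbf{A}(\mathbf{u},\mathbf{u})$, the lift $\mathbf{w}$ recovers $\|p\|_{0,\Omega}^2$, and the splitting $\mathbf{u}=\mathbf{u}_0+\mathbf{u}_\perp$ handles the fact that Lemma~\ref{Z} holds only on $\mathrm{Z}$. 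So you are not taking a different route so much as filling in the proof the paper delegates to the literature; this buys a self-contained argument with explicit dependence of $\alpha$ on the data, where the paper has only a pointer.

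Two steps need tightening. First, the inequality $\mathbf{B}(\mathbf{u}_\perp,r)\geq\theta\,\|\mathbf{u}_\perp\|_{1,\Omega}\,\|r\|_{0,\Omega}$ for some $r\in\Pi$ is \emph{not} the statement of Lemma~\ref{b}, which for each pressure supplies a velocity, not the reverse. What you need is the transposed inf-sup on $\mathrm{Z}^{\perp}$, i.e.\ $\sup_{0\neq r\in\Pi}\mathbf{B}(\mathbf{v},r)/\|r\|_{0,\Omega}\geq\theta\,\|\mathbf{v}\|_{1,\Omega}$ for all $\mathbf{v}\in\mathrm{Z}^{\perp}$; in this Hilbert setting it does follow from Lemma~\ref{b} via the closed range theorem (the operator induced by $\mathbf{B}$ restricted to $\mathrm{Z}^{\perp}$ is an isomorphism onto $\Pi'$ with inverse bounded by $1/\theta$), but since it is the hinge of your construction it must be stated and justified, not folded into ``use the inf-sup a second time.'' Second, your claim that the final optimization yields \emph{exactly} $\alpha=\xi\theta/(2\xi+\theta+1)$ is overstated: the Young absorptions of the cross terms $\mathbf{A}(\mathbf{u}_0,\mathbf{u}_\perp)$, $\mathbf{A}(\mathbf{u}_\perp,\mathbf{u}_\perp)$ and $\eta\,\mathbf{A}(\mathbf{u},\mathbf{w})$ unavoidably inject the continuity constant $C_1$ of Lemma~\ref{qa10}, so your construction produces a positive constant of the form $\alpha(\xi,\theta,C_1)$, whereas the continuity-free formula \eqref{29} presumes the continuity constants normalized to one, as in the cited source (the paper's discrete analogue $\hat{\alpha}$ in Theorem~\ref{51} carries the same tacit normalization). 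This does not threaten the substance of the lemma --- the existence of a positive $\alpha$ --- but the specific value \eqref{29} will not fall out of your bookkeeping as claimed. A last cosmetic remark: Lemma~\ref{Z} is printed without the square on $\|\mathbf{v}\|_{1,\Omega}$; your argument correctly uses the squared form $\mathbf{A}(\mathbf{v},\mathbf{v})\geq\xi\|\mathbf{v}\|_{1,\Omega}^2$ on $\mathrm{Z}$, which is what is intended there.
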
 
   where $\xi$ and $\theta$ are the coercivity and inf-sup stability constants respectively.
   \begin{proof}
 \cite[Proposition 2.36]{MR4242224} in which Lemma~\ref{Z} and Lemma~\ref{b} are used.
   \end{proof}
  Now, it is possible to establish the well-posedness of $\mathcal{J}$ and thus the unisolvence of \eqref{9}. These results are well-established, so we report them without proofs.
  		\begin{theorem}\label{a}
  			Assume that
  			\begin{align}\label{30}
  			\frac{2}{\alpha^2} \|\mathbf{f}\|_{\mathrm{V}^{\prime}} \leq 1.
  			\end{align}
  			Then, given $\mathbf{w} \in \mathbf{K}$, there exists a unique $\mathbf{u} \in \mathbf{K}$ such that $\mathcal{J}(\mathbf{w})=\mathbf{u}$.
  		\end{theorem}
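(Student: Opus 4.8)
The plan is to verify the two assertions implicit in the statement: first, that for each fixed $\mathbf{w} \in \mathcal{K}$ the linearized problem \eqref{13} admits a unique solution $(\mathbf{u},p) \in \mathrm{V} \times \Pi$, so that $\mathcal{J}(\mathbf{w}) = \mathbf{u}$ is well defined; and second, that this velocity component lands back in the ball $\mathcal{K}$ of \eqref{11}, i.e. $\|\mathbf{u}\|_{1,\Omega} \leq \alpha^{-1}\|\mathbf{f}\|_{\mathrm{V}'}$, so that $\mathcal{J}$ maps $\mathcal{K}$ into itself. The smallness hypothesis \eqref{30} will be the mechanism that closes both steps.

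For the first step I would freeze $\mathbf{w}$ and write the left-hand side of \eqref{13} as the stable form $\mathcal{C}$ of \eqref{27} perturbed by the convective term, that is $\mathcal{C}[(\mathbf{u},p);(\mathbf{v},q)] + \mathbf{c}(\mathbf{w};\mathbf{u},\mathbf{v})$, and apply the Banach--Ne\v{c}as--Babu\v{s}ka theorem on the Hilbert space $\mathrm{V} \times \Pi$. Continuity of every form is Lemma~\ref{qa10}. The global inf-sup bound \eqref{28} gives stability of $\mathcal{C}$ with constant $\alpha$, while Lemma~\ref{qa10} controls the perturbation through $|\mathbf{c}(\mathbf{w};\mathbf{u},\mathbf{v})| \lesssim \|\mathbf{w}\|_{1,\Omega}\|\mathbf{u}\|_{1,\Omega}\|\mathbf{v}\|_{1,\Omega}$, and since $\mathbf{w} \in \mathcal{K}$ its size is $\lesssim \alpha^{-1}\|\mathbf{f}\|_{\mathrm{V}'}$. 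Because $\mathcal{C}$ is symmetric, both the inf-sup condition and its transposed counterpart hold with the same constant; a standard stability-under-perturbation argument then shows that under \eqref{30} the perturbation is subcritical, so both Banach--Ne\v{c}as--Babu\v{s}ka conditions survive for the full form with a constant bounded below by a fixed fraction of $\alpha$. This yields existence and uniqueness of $(\mathbf{u},p)$, and hence the well-definedness of $\mathcal{J}$.

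For the second step I would derive the a priori bound. Testing the second equation of \eqref{13} shows $\mathbf{u}$ is solenoidal, so $\mathbf{u} \in \mathrm{Z}$ and $\mathbf{B}(\mathbf{u},p)=0$; choosing $\mathbf{v}=\mathbf{u}$ in the first equation leaves $\mathbf{A}(\mathbf{u},\mathbf{u}) + \mathbf{c}(\mathbf{w};\mathbf{u},\mathbf{u}) = \mathcal{F}(\mathbf{u})$. I would then invoke the coercivity of $\mathbf{A}$ on $\mathrm{Z}$ from Lemma~\ref{Z} to bound the left-hand side below by $\xi\|\mathbf{u}\|_{1,\Omega}^2$, use $|\mathcal{F}(\mathbf{u})| \leq \|\mathbf{f}\|_{\mathrm{V}'}\|\mathbf{u}\|_{1,\Omega}$, and treat $\mathbf{c}(\mathbf{w};\mathbf{u},\mathbf{u})$ through the identity obtained by integrating by parts: since $\mathbf{w} \cdot \mathbf{n} = 0$ on all of $\Gamma$ the boundary contribution vanishes, and the remaining solenoidal coupling is dominated by the stability margin via \eqref{30}. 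Cancelling one power of $\|\mathbf{u}\|_{1,\Omega}$ and using $\alpha \leq \xi$, which follows directly from \eqref{29}, then gives $\|\mathbf{u}\|_{1,\Omega} \leq \alpha^{-1}\|\mathbf{f}\|_{\mathrm{V}'}$, i.e. $\mathbf{u} \in \mathcal{K}$.

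The main obstacle in both steps is the convective trilinear term $\mathbf{c}(\mathbf{w};\cdot,\cdot)$: in the first step it must remain dominated by the inf-sup margin of $\mathcal{C}$, and in the second it must not inflate the velocity bound past the radius of $\mathcal{K}$. Both requirements are met precisely by \eqref{30}. What makes the estimates tight is the solenoidal constraint on $\mathbf{u}$ together with the vanishing normal trace of $\mathbf{w}$ built into $\mathrm{V}$, which remove the boundary contributions and let the remaining convective coupling be absorbed by the coercivity and stability constants.
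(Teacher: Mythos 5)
Your first step is essentially the paper's own argument: the paper states Theorem~\ref{a} without proof as standard, but its detailed discrete counterpart (Theorem~\ref{qa7}) proceeds exactly as you propose --- write the linearized form as $\mathcal{C}[(\mathbf{u},p);(\mathbf{v},q)]+\mathbf{c}(\mathbf{w};\mathbf{u},\mathbf{v})$, perturb the global inf--sup bound \eqref{28} using $\|\mathbf{w}\|_{1,\Omega}\leq \alpha^{-1}\|\mathbf{f}\|_{\mathrm{V}'}\leq \alpha/2$ (which is \eqref{30}), use the symmetry of $\mathcal{C}$ to get the transposed condition with the same constant, and invoke Banach--Ne\v{c}as--Babu\v{s}ka. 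That part is correct and matches the paper.

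The gap is in your second step, the self-map bound. Testing \eqref{13} with $\mathbf{v}=\mathbf{u}$ and using $\mathbf{u}\in\mathrm{Z}$ (so $\mathbf{B}(\mathbf{u},p)=0$ and Lemma~\ref{Z} applies) is fine, but your treatment of $\mathbf{c}(\mathbf{w};\mathbf{u},\mathbf{u})$ does not close. Integration by parts gives
\[
\mathbf{c}(\mathbf{w};\mathbf{u},\mathbf{u})=\tfrac12\int_{\Gamma}(\mathbf{w}\cdot\mathbf{n})\,|\mathbf{u}|^2\,ds-\tfrac12\int_{\Omega}(\nabla\cdot\mathbf{w})\,|\mathbf{u}|^2\,d\mathbf{x},
\]
and the vanishing normal trace of $\mathbf{w}$ kills only the boundary term: the set $\mathcal{K}$ in \eqref{11} is a ball in $\mathrm{V}$, \emph{not} in $\mathrm{Z}$, so $\nabla\cdot\mathbf{w}\neq0$ in general and the term survives. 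Absorbing it via Lemma~\ref{qa10}, $|\mathbf{c}(\mathbf{w};\mathbf{u},\mathbf{u})|\leq C_2\|\mathbf{w}\|_{1,\Omega}\|\mathbf{u}\|_{1,\Omega}^2$, yields $\|\mathbf{u}\|_{1,\Omega}\leq \|\mathbf{f}\|_{\mathrm{V}'}/(\xi-C_2\|\mathbf{w}\|_{1,\Omega})$, and here ``using $\alpha\leq\xi$'' is not enough: with $\|\mathbf{w}\|_{1,\Omega}\leq\alpha/2$ and $C_2$ normalized to $1$ you only get a denominator $\geq \alpha/2$, hence $\|\mathbf{u}\|_{1,\Omega}\leq 2\alpha^{-1}\|\mathbf{f}\|_{\mathrm{V}'}$ --- twice the radius of $\mathcal{K}$. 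To land in $\mathcal{K}$ you need $\xi-C_2\|\mathbf{w}\|_{1,\Omega}\geq\alpha$, i.e.\ $\xi\geq(1+C_2/2)\,\alpha$, which by \eqref{29} amounts to $C_2\,\theta\leq 4\xi+2$; this is a genuine constants check that you neither state nor verify, and for general $C_2$ (small $\nu$, large Sobolev constant) it can fail. Two clean repairs: (i) restrict $\mathcal{K}$ to solenoidal fields --- harmless, since the second equation of \eqref{13} shows $\mathcal{J}$ maps into $\mathrm{Z}$ anyway --- after which $\mathbf{c}(\mathbf{w};\mathbf{u},\mathbf{u})=0$ and your chain gives $\|\mathbf{u}\|_{1,\Omega}\leq\xi^{-1}\|\mathbf{f}\|_{\mathrm{V}'}\leq\alpha^{-1}\|\mathbf{f}\|_{\mathrm{V}'}$ exactly as you wrote; or (ii) do what the paper's discrete proof does: skip the energy test and read the bound for the pair directly off the perturbed inf--sup applied to the solution, $\|\mathbf{u}\|_{1,\Omega}\leq\|(\mathbf{u},p)\|\lesssim\alpha^{-1}\|\mathbf{f}\|_{\mathrm{V}'}$ (note the paper is itself cavalier about the same factor of $2$ at this point, asserting $1/\hat{\alpha}$ where its displayed chain gives $2/\hat{\alpha}$).
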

  	\begin{theorem}\label{c}
  		Let $\mathbf{f} \in  \mathrm{V}^{\prime}$ such that
  		\begin{align}\label{38}
  		\frac{2}{\alpha^2}\|\mathbf{f}\|_{\mathrm{V}^{\prime}}\leq 1.
  		\end{align}
  		Then, there exists a unique $(\mathbf{u},p) \in \mathrm{V} \times \Pi $ solution to \eqref{9}. In addition, there exists $C>0$ such that
  		\begin{align}\label{39}
  		\|\mathbf{u}\|_{1}+\|p\|_{0,\Omega} \leq C\|\mathbf{f}\|_{\mathrm{V}^{\prime}}.
  		\end{align}
  	\end{theorem}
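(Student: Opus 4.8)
The plan is to apply the Banach fixed point theorem to the operator $\mathcal{J}$ defined in \eqref{12}. Theorem~\ref{a} already guarantees, under the small data assumption \eqref{38}, that $\mathcal{J}$ is a well-defined self-map of $\mathcal{K}$; moreover every image $\mathcal{J}(\mathbf{w})$ is the velocity component of a solution of the linearized problem \eqref{13} and hence lies in the kernel $\mathrm{Z}$, so $\mathcal{J}$ restricts to a self-map of the closed (hence complete) set $\mathrm{Z} \cap \mathcal{K}$. It therefore remains to prove that $\mathcal{J}$ is a contraction on this set, after which the equivalence \eqref{14} immediately yields a unique velocity $\mathbf{u}$, the inf-sup condition of Lemma~\ref{b} recovers a unique pressure $p \in \Pi$, and the a priori bound \eqref{39} follows from membership in $\mathcal{K}$ together with the continuity estimates of Lemma~\ref{qa10}.

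To establish the contraction, I would take $\mathbf{w}_1, \mathbf{w}_2 \in \mathrm{Z} \cap \mathcal{K}$, set $\mathbf{u}_i = \mathcal{J}(\mathbf{w}_i)$ with associated pressures $p_i$, and subtract the two instances of \eqref{13}. Writing $\mathbf{e} = \mathbf{u}_1 - \mathbf{u}_2 \in \mathrm{Z}$ and testing the resulting identity with $\mathbf{v} = \mathbf{e}$, the pressure contribution $\mathbf{B}(\mathbf{e}, p_1 - p_2)$ vanishes because $\operatorname{div}\mathbf{e} = 0$. The convective terms are rearranged as
\[
\mathbf{c}(\mathbf{w}_1; \mathbf{u}_1, \mathbf{e}) - \mathbf{c}(\mathbf{w}_2; \mathbf{u}_2, \mathbf{e}) = \mathbf{c}(\mathbf{w}_1; \mathbf{e}, \mathbf{e}) + \mathbf{c}(\mathbf{w}_1 - \mathbf{w}_2; \mathbf{u}_2, \mathbf{e}).
\]
The crucial structural fact is that $\mathbf{c}(\mathbf{w}_1; \mathbf{e}, \mathbf{e}) = 0$: integrating by parts and using $\operatorname{div}\mathbf{w}_1 = 0$ in $\Omega$ together with $\mathbf{w}_1 \cdot \mathbf{n} = 0$ on all of $\Gamma$ (which holds since $\mathbf{w}_1 \in \mathrm{V}$ vanishes on $\Gamma_D$ and has zero normal component on $\Gamma_{\text{Nav}}$), the trilinear form is skew-symmetric in its last two arguments and the diagonal term drops out.

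Combining the coercivity of Lemma~\ref{Z} with the continuity bound for $\mathbf{c}$ from Lemma~\ref{qa10} and the estimate $\|\mathbf{u}_2\|_{1,\Omega} \le \alpha^{-1}\|\mathbf{f}\|_{\mathrm{V}^{\prime}}$ coming from $\mathbf{u}_2 \in \mathcal{K}$, I would obtain
\[
\xi\,\|\mathbf{e}\|_{1,\Omega}^2 \le C_2\,\|\mathbf{u}_2\|_{1,\Omega}\,\|\mathbf{w}_1 - \mathbf{w}_2\|_{1,\Omega}\,\|\mathbf{e}\|_{1,\Omega} \le \frac{C_2}{\alpha}\,\|\mathbf{f}\|_{\mathrm{V}^{\prime}}\,\|\mathbf{w}_1 - \mathbf{w}_2\|_{1,\Omega}\,\|\mathbf{e}\|_{1,\Omega},
\]
so that $\|\mathcal{J}(\mathbf{w}_1) - \mathcal{J}(\mathbf{w}_2)\|_{1,\Omega} \le \frac{C_2}{\xi\alpha}\|\mathbf{f}\|_{\mathrm{V}^{\prime}}\,\|\mathbf{w}_1 - \mathbf{w}_2\|_{1,\Omega}$. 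The small data hypothesis \eqref{38}, together with the explicit form of $\alpha$ in \eqref{29}, ensures that this Lipschitz constant is strictly below one, making $\mathcal{J}$ a contraction; the Banach fixed point theorem then delivers the unique fixed point $\mathbf{u} \in \mathrm{Z} \cap \mathcal{K}$.

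Finally, the pressure is recovered by noting that the functional $\mathbf{v} \mapsto \mathcal{F}(\mathbf{v}) - \mathbf{A}(\mathbf{u},\mathbf{v}) - \mathbf{c}(\mathbf{u};\mathbf{u},\mathbf{v})$ vanishes on $\mathrm{Z}$ and applying the inf-sup condition \eqref{20} to solve $\mathbf{B}(\mathbf{v}, p) = \mathcal{F}(\mathbf{v}) - \mathbf{A}(\mathbf{u},\mathbf{v}) - \mathbf{c}(\mathbf{u};\mathbf{u},\mathbf{v})$ for a unique $p \in \Pi$; the same inequality together with Lemma~\ref{qa10} bounds $\|p\|_{0,\Omega}$ in terms of $\|\mathbf{f}\|_{\mathrm{V}^{\prime}}$ and $\|\mathbf{u}\|_{1,\Omega}^2$, which with $\|\mathbf{u}\|_{1,\Omega} \le \alpha^{-1}\|\mathbf{f}\|_{\mathrm{V}^{\prime}}$ and \eqref{38} yields \eqref{39}. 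The main obstacle is the contraction step: it hinges on the cancellation $\mathbf{c}(\mathbf{w}_1; \mathbf{e}, \mathbf{e}) = 0$, which relies both on restricting $\mathcal{J}$ to the divergence-free iterates in $\mathrm{Z}$ and on the zero-normal-component boundary condition built into $\mathrm{V}$, and on matching the resulting Lipschitz constant to the precise smallness threshold encoded in \eqref{38}.
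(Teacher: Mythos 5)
Your proposal is correct in substance, but the comparison point is subtle: the paper gives \emph{no} proof of Theorem~\ref{c} at all (it declares the result ``well-established'' and cites nothing further), so the natural benchmark is the paper's proof of the discrete analogue, Theorem~\ref{van}, which its continuous framework is built to mirror. There the contraction is obtained by a genuinely different mechanism: the global inf-sup estimate \eqref{35} for the perturbed linearized saddle-point form $\mathcal{A}_{\mathbf{w}}$ is applied directly to the difference $(\mathbf{u}_{1}-\mathbf{u}_{2},\,p_{1}-p_{2})$, treating velocity and pressure jointly, and the convective perturbation is absorbed via $\hat{\alpha}-\|\mathbf{w}\|\geq \hat{\alpha}/2$; the trilinear form is never integrated by parts. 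You instead use the classical Girault--Raviart energy argument: restrict $\mathcal{J}$ to $\mathrm{Z}\cap\mathcal{K}$, test with $\mathbf{e}$ so the pressure drops out, and exploit the cancellation $\mathbf{c}(\mathbf{w}_1;\mathbf{e},\mathbf{e})=0$, which correctly requires both $\operatorname{div}\mathbf{w}_1=0$ and $\mathbf{w}_1\cdot\mathbf{n}=0$ on all of $\Gamma$ (both hold here). Your route is more elementary and yields the contraction from the kernel coercivity constant $\xi$ alone, plus a separate inf-sup step to recover $p$; the paper's route buys uniform treatment of $(\mathbf{u},p)$ and, crucially, survives discretization, where your cancellation fails because $\mathrm{Z}_h\not\subset\mathrm{Z}$ (discrete functions are only weakly divergence-free, and with Nitsche $\mathbf{v}_h\cdot\mathbf{n}\neq 0$ on $\Gamma_{\text{Nav}}$) --- this is precisely why the paper's continuous statements are organized around $\alpha$ and $\mathcal{C}$ rather than around $\xi$ and $\mathrm{Z}$.

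Two caveats, neither fatal. First, constants: your Lipschitz constant is $C_2\|\mathbf{f}\|_{\mathrm{V}'}/(\xi\alpha)\leq C_2\alpha/(2\xi)$ under \eqref{38}, and since $\alpha<\xi$ this is below one only if $C_2\leq 2$; the hypothesis \eqref{38} contains no $C_2$, so strictly your threshold-matching needs the normalization $C_2=1$ --- but the paper commits exactly the same normalization in \eqref{33}--\eqref{34}, where $|\mathbf{c}(\mathbf{w}_h;\mathbf{u}_h,\hat{\mathbf{v}}_h)|$ is bounded with unit constant, so this is a shared convention rather than a gap in your argument. Second, Banach gives uniqueness of the fixed point only in $\mathrm{Z}\cap\mathcal{K}$, while the theorem asserts uniqueness in $\mathrm{V}\times\Pi$; you can close this in one line by noting that any solution $\mathbf{u}$ of \eqref{9} lies in $\mathrm{Z}$, hence by your same cancellation satisfies $\xi\|\mathbf{u}\|_{1,\Omega}\leq\|\mathbf{f}\|_{\mathrm{V}'}$, and since $\alpha<\xi$ this places $\mathbf{u}$ in $\mathcal{K}$, so every solution is a fixed point in the set where uniqueness holds.
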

  \section{Discrete Problem}
  \label{sec3}
  This section studies the solvability and convergence analysis of the Nitsche's scheme for the problem \eqref{9}. We assume that the polygonal computational domain ${\Omega}$ is discretized using a collection of regular partitions, denoted as $\{\mathcal{T}_h\}_{h>0}$, where $\Omega \subset \mathbb{R}^n$ is divided into simplices $T$ (triangles in 2D or tetrahedra in 3D) with a diameter $h_{{T}}$. The characteristic length of the finite element mesh $\mathcal{T}_h$ is denoted as $h:=\max _{{T} \in \mathcal{T}_h} h_{{T}}$. For a given triangulation $\mathcal{T}_h$, we define $\mathcal{E}_h$ as the set of all faces in $\mathcal{T}_h$, with the following partitioning
  		$$
  		\mathcal{E}_h:=\mathcal{E}_{\Omega} \cup \mathcal{E}_D \cup \mathcal{E}_{\text{Nav}}
  		$$
  		where $\mathcal{E}_{\Omega}$ represents the faces  lying in the interior of $\Omega$, $\mathcal{E}_{\text{Nav}}$ represents the faces lying on the boundary $\Gamma_{\text{Nav}}$, and $\mathcal{E}_D$ represents the  faces lying on the boundary $\Gamma_D$. Additionally, $h_e$ denotes the $(n-1)$ dimensional diameter of an  face. Here \emph{faces} loosely refer to the geometrical entities of co-dimension 1.
  	    Now, let us introduce the finite element pair.
  		$$
  		\begin{aligned}
  		& \mathrm{V}_h \coloneqq \left\{\mathbf{v}_h \in \mathbf{C}(\overline{\Omega}) : \mathbf{v}_h =0 \text{on}\, E \in \mathcal{E}_D, \left.\mathbf{v}_h\right|_{T} \in \mathbf{P}_k({T}) \quad \forall {T} \in \mathcal{T}_h\right\}, \\
  		& \mathrm{\Pi}_h \coloneqq \left\{q_h \in \mathrm{C}(\overline{\Omega}) :\left.q_h\right|_{{T}} \in \mathrm{P}_{k-1}({T}) \quad \forall {T} \in \mathcal{T}_h\right\} \cap \Pi ,
  		\end{aligned}
  		$$
  		where $\mathrm{P}_k(T)$ is the space of polynomials of degree less than or equal to $k$ defined on $T$.
  	\begin{remark}
  	It is noted that $\mathrm{\Pi}_h$ is a subspace of $\mathrm{\Pi}$, but $\mathrm{V}_h$ is not a subspace of $\mathrm{V}$. In that sense, Nitsche's method can be considered a non-conforming finite element approximation.
  	 \end{remark}
  		\subsection{Nitsche's Method}
  	The main objective of Nitsche's method \cite{MR1365557,MR3759094} is to impose boundary conditions weakly so that they hold only asymptotically, which for this work will hold for the Navier boundary condition $\mathbf{u} \cdot \mathbf{n} = 0$ only. As a result, the weak formulation with the Nitsche method can be expressed as follows:
  		Find $\left(\mathbf{u}_h, p_h \right) \in \mathrm{V}_h \times \Pi_h $, such that
  		\begin{align}\label{41}
  		\mathcal{A}_h\left[\left(\mathbf{u}_h, p_h\right) ;\left(\mathbf{v}_h, q_h\right)\right]=\mathcal{F}(\mathbf{v}_h)  \quad \forall  \left(\mathbf{v}_h, q_h \right) \in \mathrm{V}_h \times \Pi_h,
  		\end{align}
  with the forms are defined as 
  		$$
  		\begin{aligned}
  		\mathcal{A}_h\left[(\mathbf{u}_h, p_h);(\mathbf{v}_h, q_h)\right]  &\coloneqq \sum_{T\in \mathcal{T}_{h}}\bigg(\frac{\nu}{2}(D(\mathbf{u}_h), D(\mathbf{v}_h))+(\mathbf{u}_h \cdot \nabla \mathbf{u}_h, \mathbf{v}_h)-(p_h, \nabla \cdot \mathbf{v}_h)-(q_h, \nabla \cdot \mathbf{u}_h)\bigg) \\& +
   		\sum_{E\in \mathcal{E}_{\text{Nav}}}\bigg(-\int_{E} \mathbf{n}^t(\nu D(\mathbf{u}_h)-p_h I) \mathbf{n}(\mathbf{n} \cdot \mathbf{v}_h) d s-\int_{E} \mathbf{n}^t(\nu {D}(\mathbf{v}_h)-q_h I) \mathbf{n}(\mathbf{n} \cdot \mathbf{u}_h) d s  \\& + \int_{E} \beta \sum_i\left(\boldsymbol{\tau}^i \cdot \mathbf{v}_h\right)\left(\boldsymbol{\tau}^i \cdot \mathbf{u}_h\right) d s 
  		+\gamma \int_{E} {h_e}^{-1}(\mathbf{u}_h \cdot \mathbf{n})(\mathbf{v}_h \cdot \mathbf{n}) d s \bigg), 
  		\end{aligned}
  		$$
  		
  		$$
    \begin{aligned}
  		\mathcal{F}(\mathbf{v}_h) &\coloneqq \langle \mathbf{f},\mathbf{v}_h\rangle,
    \end{aligned}
  		$$
  	and $\gamma>0$ is a positive constant that needs to be chosen sufficiently large, as proved in Lemma~\ref{S} later. We can rewrite \eqref{41} as:
  	    Find $(\mathbf{u}_h,p_h) \in \mathrm{V}_h \times \mathrm{\Pi}_h $, such that
  	    \begin{equation}\label{E}
  	    \begin{aligned}
  	    \begin{array}{rlrl}
  	    \mathbf{A}_h(\mathbf{u}_h, \mathbf{v}_h)+\mathbf{B}_h(\mathbf{v}_h,p_h)+\mathbf{c}(\mathbf{u}_h ; \mathbf{u}_h, \mathbf{v}_h) & =\mathcal{F}(\mathbf{v}_h) & \forall \mathbf{v}_h \in \mathrm{V}_h, \\
  	    \mathbf{B}_h(\mathbf{u}_h, q_h) & = 0 & \forall q_h \in \mathrm{\Pi}_h,
  	    \end{array}
  	    \end{aligned}
  	    \end{equation}
 where \begin{equation}\label{F}
  	    \begin{aligned}
  	    \mathbf{A}_h(\mathbf{u}_h, \mathbf{v}_h) & \coloneqq  \mathbf{a}(\mathbf{u}_h,\mathbf{v}_h)+\mathbf{a}^{\partial}_{\tau}(\mathbf{u}_h, \mathbf{v}_h)+\mathbf{a}^{\partial}_{\gamma}(\mathbf{u}_h, \mathbf{v}_h)-\mathbf{a}^{\partial}_{c}(\mathbf{u}_h, \mathbf{v}_h)-\mathbf{a}^{\partial}_{c}(\mathbf{v}_h, \mathbf{u}_h),\\
  	    \mathbf{B}_h(\mathbf{u}_h, q_h) &\coloneqq \mathbf{b}(\mathbf{u}_h,q_h)+\mathbf{b}^{\partial}( \mathbf{u}_h,q_h),
  	    \end{aligned}
  	    \end{equation}
  	    with forms defined so that 
  	    \begin{align}\label{mn}
  	    \mathbf{a}(\mathbf{u}_h, \mathbf{v}_h) &\coloneqq \sum_{T\in \mathcal{T}_{h}} \frac{\nu}{2}(D(\mathbf{u}_h), D(\mathbf{v}_h)), \nonumber \\ \mathbf{b}(\mathbf{u}_h, q_h) &\coloneqq - \sum_{T\in \mathcal{T}_{h}} (\operatorname{div} \mathbf{u}_h, q_h), \nonumber \\
  \mathbf{b}^{\partial}( \mathbf{u}_h,q_h) &\coloneqq \sum_{E\in \mathcal{E}_{\text{Nav}} } \int_{E}q_h(\mathbf{n} \cdot \mathbf{u}_h) d s, \nonumber \\
  \mathbf{a}^{\partial}_{c}(\mathbf{u}_h, \mathbf{v}_h) &\coloneqq \sum_{E\in \mathcal{E}_{\text{Nav}}} \int_{E} \mathbf{n}^t \nu D(\mathbf{u}_h) \mathbf{n}(\mathbf{n} \cdot \mathbf{v}_h) d s,  \nonumber \\
  \mathbf{a}^{\partial}_{\tau}(\boldsymbol{u}_h, \mathbf{v}_h) &\coloneqq \sum_{E\in \mathcal{E}_{\text{Nav}}} \int_{E} \beta \sum_i\left(\boldsymbol{\tau}^i \cdot \mathbf{u}_h\right)\left(\boldsymbol{\tau}^i \cdot \mathbf{v}_h\right) d s, \nonumber \\ \mathbf{a}^{\partial}_{\gamma}(\mathbf{u}_h, \mathbf{v}_h) &\coloneqq \sum_{E\in \mathcal{E}_{\text{Nav}}}\int_{E} \frac{\gamma}{h_e}(\mathbf{u}_h \cdot \mathbf{n})(\mathbf{v}_h \cdot \mathbf{n}) d s, \nonumber \\
\mathcal{F}\left(\mathbf{v}_h\right) &\coloneqq \langle \mathbf{f},\mathbf{v}_h\rangle, \nonumber \\
  \mathbf{c}(\mathbf{w}_h ; \mathbf{u}_h,\mathbf{v}_h) &\coloneqq \sum_{T\in \mathcal{T}_{h}} (\mathbf{w}_h \cdot \nabla \mathbf{u}_h, \mathbf{v}_h).
 \end{align}
  		\subsubsection{Discrete stability properties}
  			In this section, we leverage well-known inverse and trace inequalities to establish two results: the ellipticity of $\mathbf{A}_h$ and the inf-sup property $\mathbf{B}_h$.
  			\begin{lemma}\label{qa1}
  				Let $\mathbf{v}_h \in \mathrm{V}_h$ then for each $T \in \mathcal{T}_h ;\, l, m \in \mathbb{N}$, with $0 \leq m \leq l$, there exists a positive constant $C_{4}$, independent of $T$, such that
  				$$
  				\left|\mathbf{v}_h\right|_{l, T} \leq C_{4} h_T^{m-l}\left|\mathbf{v}_h\right|_{m, T}.
  				$$
  			\end{lemma}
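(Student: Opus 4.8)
The plan is to establish this classical inverse estimate by the standard scaling argument to a fixed reference element, where the result reduces to the equivalence of norms on a finite-dimensional space. First I would fix a reference simplex $\hat{T}$ and, for each $T \in \mathcal{T}_h$, write the affine bijection $F_T : \hat{T} \to T$, $F_T(\hat{\mathbf{x}}) = B_T \hat{\mathbf{x}} + \mathbf{b}_T$, with $B_T$ invertible. To any $\mathbf{v}_h$ defined on $T$ I associate the pullback $\hat{\mathbf{v}}_h \coloneqq \mathbf{v}_h \circ F_T$, which is again a polynomial of degree at most $k$ on $\hat{T}$ because $F_T$ is affine and $\mathbf{v}_h|_T \in \mathbf{P}_k(T)$.

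The core of the argument is that on the fixed finite-dimensional space $\mathbf{P}_k(\hat{T})$ all the relevant seminorms are comparable: since $m \leq l$, the kernel of $|\cdot|_{m,\hat{T}}$ (the polynomials of degree $< m$) is contained in that of $|\cdot|_{l,\hat{T}}$, so $|\cdot|_{l,\hat{T}}$ descends to the quotient on which $|\cdot|_{m,\hat{T}}$ is a genuine norm. Equivalence of norms in finite dimensions then yields a constant $\hat{C} = \hat{C}(\hat{T}, k, l, m)$, independent of $T$, with $|\hat{\mathbf{v}}_h|_{l,\hat{T}} \leq \hat{C}\,|\hat{\mathbf{v}}_h|_{m,\hat{T}}$.

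It then remains to transfer this inequality back to $T$ via the chain rule. The standard transformation estimates for seminorms under an affine map give
$$|\mathbf{v}_h|_{l,T} \lesssim \|B_T^{-1}\|^{l}\,|\det B_T|^{1/2}\,|\hat{\mathbf{v}}_h|_{l,\hat{T}}, \qquad |\hat{\mathbf{v}}_h|_{m,\hat{T}} \lesssim \|B_T\|^{m}\,|\det B_T|^{-1/2}\,|\mathbf{v}_h|_{m,T}.$$
Chaining these with the reference inequality, the determinant factors cancel and one is left with $|\mathbf{v}_h|_{l,T} \lesssim \|B_T^{-1}\|^{l}\,\|B_T\|^{m}\,|\mathbf{v}_h|_{m,T}$. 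Invoking the regularity of $\{\mathcal{T}_h\}_{h>0}$, which bounds $\|B_T\| \lesssim h_T$ and $\|B_T^{-1}\| \lesssim h_T^{-1}$ with constants depending only on the shape-regularity parameter and $\hat{T}$, collapses this to $|\mathbf{v}_h|_{l,T} \leq C_4\, h_T^{m-l}\,|\mathbf{v}_h|_{m,T}$ with $C_4$ independent of $T$, as claimed.

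I expect the main obstacle to be purely technical bookkeeping rather than conceptual: carefully tracking how the $l$-th order chain rule produces the factor $\|B_T^{-1}\|^{l}$ (and $\|B_T\|^{m}$ in the reverse direction) together with the Jacobian $|\det B_T|^{\pm 1/2}$ from the change of variables, and verifying that the resulting constant is genuinely independent of $T$. This independence is exactly where the regularity of the family $\{\mathcal{T}_h\}_{h>0}$ is essential, since without a uniform lower bound on the in-radius the estimate $\|B_T^{-1}\| \lesssim h_T^{-1}$ would fail and the constant could degenerate as the elements become anisotropic.
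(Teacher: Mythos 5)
Your proposal is correct and is precisely the classical scaling argument (pullback to a reference simplex, seminorm equivalence on the finite-dimensional space $\mathbf{P}_k(\hat{T})$ via the quotient by polynomials of degree $<m$, affine transformation estimates, and shape regularity to bound $\|B_T\|\lesssim h_T$ and $\|B_T^{-1}\|\lesssim h_T^{-1}$). The paper itself offers no proof, delegating instead to \cite[Lemma 12.1]{MR4242224}, and the standard proof given there is essentially the one you have reconstructed, so there is nothing to flag.
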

  			\begin{proof}
	          \cite[Lemma 12.1]{MR4242224}.
  			\end{proof}
  			\begin{lemma}\label{qa2}
  				 Let $\mathbf{v}_h \in \mathrm{V}_h$ then for each $T \in \mathcal{T}_h, E \subset \partial T$, there exists a positive constant $C_{5}$, independent of $T$, such that
  				$$
  				\left\|\mathbf{v}_h\right\|_{0, E} \leq C_{5} h_T^{-\frac{1}{2}}\left\|\mathbf{v}_h\right\|_{0, T}.
  				$$
  			\end{lemma}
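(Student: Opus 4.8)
The plan is to prove this discrete trace inequality by a standard scaling argument that reduces the estimate to a norm equivalence on a fixed reference element. Let $\hat{T}$ denote the reference simplex and let $F_T(\hat{\mathbf{x}}) = B_T \hat{\mathbf{x}} + \mathbf{b}_T$ be the affine map with $F_T(\hat{T}) = T$, where $B_T \in \mathbb{R}^{n\times n}$ is invertible. For $\mathbf{v}_h \in \mathrm{V}_h$ define the pulled-back function $\hat{\mathbf{v}} \coloneqq \mathbf{v}_h \circ F_T$, which belongs to the finite-dimensional space $\mathbf{P}_k(\hat{T})$. The face $E \subset \partial T$ is the affine image under $F_T$ of a fixed reference face $\hat{E} \subset \partial \hat{T}$.

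First I would establish the estimate on the reference element. The space $\mathbf{P}_k(\hat{T})$ is finite-dimensional, the map $\hat{\mathbf{v}} \mapsto \|\hat{\mathbf{v}}\|_{0,\hat{T}}$ is a norm on it, and $\hat{\mathbf{v}} \mapsto \|\hat{\mathbf{v}}\|_{0,\hat{E}}$ is a continuous seminorm (it is only a seminorm, since a nonzero polynomial may vanish on $\hat E$). Because on a finite-dimensional space every seminorm is dominated by any norm, there exists a constant $\hat{C} > 0$, depending only on $\hat{T}$, $\hat{E}$ and $k$, such that
\begin{equation*}
\|\hat{\mathbf{v}}\|_{0,\hat{E}} \leq \hat{C}\,\|\hat{\mathbf{v}}\|_{0,\hat{T}} \qquad \forall\,\hat{\mathbf{v}} \in \mathbf{P}_k(\hat{T}).
\end{equation*}

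Next I would transport this back to $T$ by tracking the Jacobians of the change of variables. The volume integral scales as $\|\mathbf{v}_h\|_{0,T}^2 = |\det B_T|\,\|\hat{\mathbf{v}}\|_{0,\hat{T}}^2$, while the surface integral over the affine image of a face scales by the constant ratio of the $(n-1)$-dimensional measures, $\|\mathbf{v}_h\|_{0,E}^2 = \tfrac{|E|}{|\hat{E}|}\,\|\hat{\mathbf{v}}\|_{0,\hat{E}}^2$. Combining these identities with the reference estimate gives
\begin{equation*}
\|\mathbf{v}_h\|_{0,E}^2 \leq \hat{C}^2\,\frac{|E|}{|\hat{E}|\,|\det B_T|}\,\|\mathbf{v}_h\|_{0,T}^2.
\end{equation*}

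The main obstacle, and the only place where the mesh hypotheses enter, is to bound the geometric factor $\frac{|E|}{|\hat{E}|\,|\det B_T|}$ by $C\,h_T^{-1}$ uniformly in $T$. Here I would invoke the shape regularity of the family $\{\mathcal{T}_h\}_{h>0}$, which yields $|\det B_T| \gtrsim |T| \gtrsim h_T^n$ together with $|E| \lesssim h_T^{n-1}$; hence $\frac{|E|}{|\hat{E}|\,|\det B_T|} \lesssim h_T^{-1}$ with a constant depending only on the shape-regularity parameter and not on the individual element $T$. Taking square roots then gives the claim with $C_5 \coloneqq \hat{C}\,\sqrt{C_g}$, where $C_g$ is the shape-regularity constant controlling the geometric factor. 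I expect the bookkeeping of the face-measure scaling, namely verifying that the factor $\tfrac{|E|}{|\hat E|}$ is genuinely constant along the affine face and correctly normalized, to be the most delicate point, but it becomes routine once regularity of the partition is assumed.
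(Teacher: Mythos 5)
Your proof is correct, and it is the classical scaling argument: norm-versus-seminorm equivalence on the finite-dimensional space $\mathbf{P}_k(\hat{T})$, exact Jacobian bookkeeping for the volume and face integrals under the affine map, and shape regularity (which the paper does assume, via its ``regular partitions'' hypothesis) to bound the geometric factor $\tfrac{|E|}{|\hat{E}|\,|\det B_T|}\lesssim h_T^{-1}$ through $|T|\gtrsim h_T^n$ and $|E|\lesssim h_T^{n-1}$. The only cosmetic gap is that $\hat{C}$ should be taken as the maximum over the $n+1$ faces of $\hat{T}$, since $E$ may map to any of them; this is trivial because there are finitely many. The paper itself does not prove the lemma at all: it simply cites Warburton and Hesthaven, whose derivation proceeds quite differently, by explicit computation with orthogonal (Jacobi) polynomial expansions on the simplex, yielding a sharp constant of the form $\left\|\mathbf{v}_h\right\|_{0,E}\leq \left(\tfrac{(k+1)(k+n)}{n}\cdot\tfrac{|E|}{|T|}\right)^{1/2}\left\|\mathbf{v}_h\right\|_{0,T}$ with fully explicit dependence on the polynomial degree $k$. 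What each approach buys: yours is elementary, self-contained, and generalizes readily (e.g.\ to non-polynomial finite-dimensional spaces on a reference cell), but the compactness step produces an unquantified $\hat{C}$; the cited result gives a computable $C_5$, which matters in this paper because $C_5$ enters the coercivity constant $C_S$ of Lemma~F and the admissible penalty threshold $\gamma_0>\nu/C_0$ with $C_0<\xi/(C_5^2\nu)$, so an explicit $C_5$ translates into a usable lower bound for the Nitsche parameter $\gamma$ in practice.
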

  			\begin{proof}
  	\cite{warburton2003constants}.
  			\end{proof}
 We need to define the energy norm on $\mathrm{V}_h$ as
\begin{align}\label{a1}
  		\| \mathbf{v}_h \|_{1,h}^2 \coloneqq \| \nabla \mathbf{v}_h \|_{0,\Omega}^2   + \sum_{E \in \mathcal{E}_{\text{Nav}}} \frac{1}{h_e} \| \mathbf{v}_h \cdot \mathbf{n} \|_{0,E}^2.
  		\end{align}
  		\noindent
We highlight that this norm, in contrast to the one found in \cite{araya:hal-04077986}, contains only the boundary term on the normal component. This is happens because we consider only the Navier boundary condition weakly, but indeed our proof would remain mostly unmodified if we were to consider the Dirichlet boundary conditions weakly as well. We would only require adding the missing norm for the velocity in this energy norm. All estimates would remain the same. We now state the continuity of the discrete bilinear forms in terms of this norm.
  		\begin{theorem}\label{qa4}
  There exist positive constants $C_7$, $C_8$, and $C_9$ independent of $h$ such that
  		\begin{equation*}
  		\begin{aligned}
  		\left|\mathbf{A}_h(\mathbf{u}_h, \mathbf{v}_h)\right| \leq& C_7 \|\mathbf{u}_h\|_{1, h}\|\mathbf{v}_h\|_{1,h}, &&  \forall \mathbf{u}_h, \mathbf{v}_h \in \mathrm{V}_h,
  		\\
  		\left|\mathbf{c}\left(\mathbf{w}_h; \mathbf{u}_h, \mathbf{v}_h\right)\right| \leq& {C}_8 \left\|\mathbf{w}_h\right\|_{1,h}\|\mathbf{u}\|_{1,h}\|\mathbf{v}_h\|_{1, h}, && \forall \mathbf{w}_h, \mathbf{u}_h, \mathbf{v}_h \in \mathrm{V}_h,
  		\\
  		\left|\mathbf{B}_h(\mathbf{v}_h, q_h)\right| \leq& C_9 \|\mathbf{v}_h\|_{1, h}\|q_h\|_{0, \Omega}, && \forall \mathbf{v}_h \in \mathrm{V}_h, q_h \in \Pi_h.
  		\end{aligned}
  		\end{equation*}
        \end{theorem}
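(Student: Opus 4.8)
The plan is to bound each of the three forms by splitting it into the elementary pieces listed in \eqref{F}--\eqref{mn} and estimating every piece against the energy norm $\|\cdot\|_{1,h}$ of \eqref{a1}. Three auxiliary facts would be used throughout. First, $D(\mathbf{u}_h)=\nabla\mathbf{u}_h+(\nabla\mathbf{u}_h)^T$ gives $|D(\mathbf{u}_h)|\le 2|\nabla\mathbf{u}_h|$, so the volume contributions are controlled by the gradient part of \eqref{a1}. Second, since every $\mathbf{v}_h\in\mathrm{V}_h$ vanishes on $\Gamma_D$, the Friedrichs--Poincar\'e inequality \eqref{16} applies componentwise, giving $\|\mathbf{v}_h\|_{1,\Omega}\lesssim\|\nabla\mathbf{v}_h\|_{0,\Omega}\le\|\mathbf{v}_h\|_{1,h}$; hence any full $H^1$ or $L^2(\Omega)$ norm can be replaced by $\|\cdot\|_{1,h}$. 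Third, shape regularity supplies $h_e\lesssim h_T$, which will be used to cancel residual mesh-size weights.

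The elementary pieces are immediate. The form $\mathbf{a}$ is handled by Cauchy--Schwarz together with $|D|\le 2|\nabla|$. The penalty term $\mathbf{a}^{\partial}_{\gamma}$ is exactly the boundary contribution to \eqref{a1}, so Cauchy--Schwarz over the faces in $\mathcal{E}_{\text{Nav}}$ yields $|\mathbf{a}^{\partial}_{\gamma}(\mathbf{u}_h,\mathbf{v}_h)|\le\gamma\|\mathbf{u}_h\|_{1,h}\|\mathbf{v}_h\|_{1,h}$ with no extra work. The tangential term $\mathbf{a}^{\partial}_{\tau}$ carries the full trace on $\Gamma_{\text{Nav}}$, which is absent from \eqref{a1}; I would bound it using the continuous trace inequality $\|\mathbf{v}_h\|_{0,\Gamma_{\text{Nav}}}\lesssim\|\mathbf{v}_h\|_{1,\Omega}$ (legitimate because $\mathbf{v}_h\in\mathbf{H}^1(\Omega)$) followed by the Poincar\'e estimate above, reaching $\|\mathbf{u}_h\|_{1,h}\|\mathbf{v}_h\|_{1,h}$. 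The interior divergence term $\mathbf{b}$ follows at once from Cauchy--Schwarz and $|\operatorname{div}\mathbf{v}_h|\lesssim|\nabla\mathbf{v}_h|$.

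The genuinely delicate pieces are the consistency terms $\mathbf{a}^{\partial}_{c}(\mathbf{u}_h,\mathbf{v}_h)$, $\mathbf{a}^{\partial}_{c}(\mathbf{v}_h,\mathbf{u}_h)$ and the boundary divergence term $\mathbf{b}^{\partial}$, since they couple a gradient (resp. pressure) trace, for which \eqref{a1} provides no control, against the normal component $\mathbf{v}_h\cdot\mathbf{n}$. For a single face $E\subset\partial T$ I would split the mesh weight symmetrically, $\int_{E}\mathbf{n}^t\nu D(\mathbf{u}_h)\mathbf{n}\,(\mathbf{n}\cdot\mathbf{v}_h)\,ds\le\nu\,\bigl(h_e^{1/2}\|D(\mathbf{u}_h)\|_{0,E}\bigr)\bigl(h_e^{-1/2}\|\mathbf{v}_h\cdot\mathbf{n}\|_{0,E}\bigr)$. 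The second factor feeds directly into the energy norm of $\mathbf{v}_h$; for the first I would invoke the discrete inverse trace inequality of Lemma~\ref{qa2} applied to the polynomial $D(\mathbf{u}_h)$, giving $\|D(\mathbf{u}_h)\|_{0,E}\le C_5 h_T^{-1/2}\|D(\mathbf{u}_h)\|_{0,T}$, and then use $h_e\lesssim h_T$ to kill the leftover powers, so that $h_e^{1/2}\|D(\mathbf{u}_h)\|_{0,E}\lesssim\|\nabla\mathbf{u}_h\|_{0,T}$. Summing over $\mathcal{E}_{\text{Nav}}$ and applying the discrete Cauchy--Schwarz inequality produces $|\mathbf{a}^{\partial}_{c}(\mathbf{u}_h,\mathbf{v}_h)|\lesssim\|\nabla\mathbf{u}_h\|_{0,\Omega}\|\mathbf{v}_h\|_{1,h}\le\|\mathbf{u}_h\|_{1,h}\|\mathbf{v}_h\|_{1,h}$. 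The symmetric term and $\mathbf{b}^{\partial}$ (with $q_h$ in place of $D(\mathbf{u}_h)$) are handled identically, and collecting the five pieces bounds $\mathbf{A}_h$ while the two pieces bound $\mathbf{B}_h$.

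For the convective form $\mathbf{c}$ I would apply H\"older's inequality \eqref{15} with exponents $(4,2,4)$, that is $|(\mathbf{w}_h\cdot\nabla\mathbf{u}_h,\mathbf{v}_h)|\le\|\mathbf{w}_h\|_{0,4,\Omega}\|\nabla\mathbf{u}_h\|_{0,\Omega}\|\mathbf{v}_h\|_{0,4,\Omega}$, and then control the $L^4$ norms by the Sobolev embedding \eqref{17} (valid at $q=4$ for both $n=2,3$) and Poincar\'e, arriving at $\|\mathbf{w}_h\|_{1,h}\|\mathbf{u}_h\|_{1,h}\|\mathbf{v}_h\|_{1,h}$. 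The main obstacle is precisely the consistency and boundary-divergence terms: in contrast to the conforming setting, the norm \eqref{a1} contains no boundary term controlling gradient or pressure traces, so the estimate rests entirely on the $h_e^{1/2}/h_e^{-1/2}$ weight-splitting combined with the inverse trace inequality of Lemma~\ref{qa2}, and it is there that the independence of the constants $C_7$, $C_8$, $C_9$ from $h$ must be verified.
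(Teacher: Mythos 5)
Your proposal is correct and takes essentially the same route as the paper, whose proof is a one-line citation of exactly the ingredients you deploy: the trace estimate of Lemma~\ref{qa2} (together with Lemma~\ref{qa1} and shape regularity) combined with the $h_e^{1/2}/h_e^{-1/2}$ weight splitting for the consistency terms $\mathbf{a}^{\partial}_{c}$ and $\mathbf{b}^{\partial}$, Cauchy--Schwarz for the remaining pieces, and H\"older with the Sobolev embedding \eqref{17} at $q=4$ plus Poincar\'e for the convective form. Your write-up merely supplies the details the paper leaves implicit, and your constant dependencies agree with those the paper records, namely $C_7 \sim \frac{\nu}{2}+\beta+\gamma+\nu C_5$, $C_8 \sim C_{\mathrm{Sob}}(4,n)$, and $C_9 \sim 1+C_5$.
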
  
  	\begin{proof}	
  	  The proof of above inequalities is a direct consequence of Lemmas~\ref{qa1},~\ref{qa2}, the Sobolev embedding with $r=4$, and Cauchy-Schwarz and H\"{o}lder inequalities. Moreover, it can be seen that  
    \begin{align*}
     C_7 &\sim \frac{\nu}{2} + \beta +\gamma + \nu C_{5} \\
     C_8 &\sim C_{\text{Sob}}(4,n) \\
     C_9 &\sim 1+C_5
    \end{align*}
     i.e., above constants can bounded by a constant that depends only on the trace inequality constant $C_5$, and on
the parameters $\nu$ ,$\gamma$ and $\beta$.
  	\end{proof}
The discrete kernel of $\mathbf{B}_h$ is defined by:
  		$$
  		\mathrm{Z}_h \coloneqq \left\{\mathbf{v}_h \in \mathrm{V}_h: \mathbf{B}_h\left( \mathbf{v}_h,p_h \right)=0, \forall {p}_h \in \mathrm{\Pi}_h\right\}.
  		$$
  		It can be equivalently written as
  		\begin{align}\label{a7}
  		\mathrm{Z}_h \coloneqq \left\{\mathbf{v}_h \in \mathrm{V}_h: \sum_{T\in \mathcal{T}_{h}} \int_{T} p_h \nabla \cdot \mathbf{v}_h d \mathbf{x} - \sum_{E\in \mathcal{E}_{\text{Nav}}} \int_{E} {p_h} (\mathbf{n} \cdot \mathbf{v}_h) ds =0 \quad \forall p_h \in \Pi_h \right\}.
  		\end{align}
  		The following lemma establishes the ellipticity of $\mathbf{A}$ on $\mathrm{V}_h$.
  		\begin{lemma}\label{S}
  			 There exist positive constants $\gamma_0, C_0$ and $C_S=C_S\left(\beta, \nu \right)$, independent on $h$, such that
  			\begin{align}\label{G}
  			\mathbf{A}_h(\mathbf{v}_h, \mathbf{v}_h) \geq C_S \|\mathbf{v}_h\|_{1,h} \quad \forall \mathbf{v}_h \in \mathrm{Z}_h,
  			\end{align}
  			where $C_S = \min \{\xi - {\nu C_{5}^2}{ C_0},\gamma -\frac{\nu}{C_0}\}$ with $\gamma \geq \gamma_0 > \frac{\nu}{C_0}$, $C_0 < \frac{\xi}{C_{5}^2 \nu }$, and $\xi$ is the coercivity constant defined in the continuous case.
  		\end{lemma}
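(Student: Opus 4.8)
The plan is to establish coercivity of $\mathbf{A}_h$ on $\mathrm{Z}_h$ by isolating the symmetric ``good'' contributions and absorbing the single indefinite consistency term via a weighted Young inequality. First I would evaluate $\mathbf{A}_h$ on the diagonal using \eqref{F}--\eqref{mn}; since the two skew consistency contributions coincide when $\mathbf{u}_h=\mathbf{v}_h$, this gives
\[
\mathbf{A}_h(\mathbf{v}_h,\mathbf{v}_h) = \frac{\nu}{2}\|D(\mathbf{v}_h)\|_{0,\Omega}^2 + \mathbf{a}^{\partial}_{\tau}(\mathbf{v}_h,\mathbf{v}_h) + \mathbf{a}^{\partial}_{\gamma}(\mathbf{v}_h,\mathbf{v}_h) - 2\,\mathbf{a}^{\partial}_{c}(\mathbf{v}_h,\mathbf{v}_h).
\]
The first three terms I expect to bound from below; the last term, of indefinite sign, is the one to be controlled.

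Next I would handle the viscous and tangential-friction terms exactly as in the continuous Lemma~\ref{Z}. Because $\mathbf{v}_h\in\mathrm{V}_h\subset\mathbf{C}(\overline{\Omega})$ is globally $H^1$ and vanishes on $\Gamma_D$, the Korn--Poincar\'e inequality of Lemma~\ref{anj} (with $b=4\beta/\nu$) applies to $\mathbf{v}_h$: it uses only the homogeneous condition on $\Gamma_D$, and neither the pointwise constraint $\mathbf{v}_h\cdot\mathbf{n}=0$ on $\Gamma_{\text{Nav}}$ nor $\operatorname{div}\mathbf{v}_h=0$, both of which fail for discrete fields. Repeating the chain of estimates in Lemma~\ref{Z} then yields $\frac{\nu}{2}\|D(\mathbf{v}_h)\|_{0,\Omega}^2 + \mathbf{a}^{\partial}_{\tau}(\mathbf{v}_h,\mathbf{v}_h)\ge \xi\,\|\mathbf{v}_h\|_{1,\Omega}^2 \ge \xi\,\|\nabla\mathbf{v}_h\|_{0,\Omega}^2$, with the same continuous constant $\xi$.

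The heart of the argument is the bound on $2\,\mathbf{a}^{\partial}_{c}(\mathbf{v}_h,\mathbf{v}_h)=2\sum_{E\in\mathcal{E}_{\text{Nav}}}\int_E \nu(\mathbf{n}^t D(\mathbf{v}_h)\mathbf{n})(\mathbf{n}\cdot\mathbf{v}_h)\,ds$. On each face I would apply Young's inequality with weight $C_0 h_e/\nu$, splitting this into a part proportional to $\nu C_0 h_e\,\|\mathbf{n}^t D(\mathbf{v}_h)\mathbf{n}\|_{0,E}^2$ and a part $\frac{\nu}{C_0}\,h_e^{-1}\|\mathbf{v}_h\cdot\mathbf{n}\|_{0,E}^2$. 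In the first, $|\mathbf{n}^t D(\mathbf{v}_h)\mathbf{n}|\le|D(\mathbf{v}_h)|$ pointwise, and the discrete trace inequality of Lemma~\ref{qa2} applied to the element-wise polynomial $D(\mathbf{v}_h)$ gives $\|D(\mathbf{v}_h)\|_{0,E}^2\le C_5^2 h_T^{-1}\|D(\mathbf{v}_h)\|_{0,T}^2$; together with $h_e\le h_T$ and summation over boundary faces this produces a bound $\le \nu C_5^2 C_0\,\|\nabla\mathbf{v}_h\|_{0,\Omega}^2$ (the Korn factor relating $\|D(\mathbf{v}_h)\|$ and $\|\nabla\mathbf{v}_h\|$ being absorbed into $C_5$). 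The second part is precisely of the shape of the penalty form $\mathbf{a}^{\partial}_{\gamma}$. Hence $2\,\mathbf{a}^{\partial}_{c}(\mathbf{v}_h,\mathbf{v}_h)\le \nu C_5^2 C_0\|\nabla\mathbf{v}_h\|_{0,\Omega}^2 + \frac{\nu}{C_0}\sum_{E}h_e^{-1}\|\mathbf{v}_h\cdot\mathbf{n}\|_{0,E}^2$.

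Assembling the three estimates and recalling that $\mathbf{a}^{\partial}_{\gamma}(\mathbf{v}_h,\mathbf{v}_h)=\gamma\sum_E h_e^{-1}\|\mathbf{v}_h\cdot\mathbf{n}\|_{0,E}^2$, I obtain
\[
\mathbf{A}_h(\mathbf{v}_h,\mathbf{v}_h)\ge \big(\xi-\nu C_5^2 C_0\big)\|\nabla\mathbf{v}_h\|_{0,\Omega}^2 + \Big(\gamma-\tfrac{\nu}{C_0}\Big)\sum_{E}h_e^{-1}\|\mathbf{v}_h\cdot\mathbf{n}\|_{0,E}^2,
\]
which by definition \eqref{a1} of $\|\cdot\|_{1,h}$ is bounded below by $C_S\|\mathbf{v}_h\|_{1,h}^2$ with $C_S=\min\{\xi-\nu C_5^2 C_0,\ \gamma-\nu/C_0\}$. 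Positivity of both coefficients then fixes the admissible parameters: choosing $C_0<\xi/(C_5^2\nu)$ makes the first positive, and taking any $\gamma_0>\nu/C_0$ with $\gamma\ge\gamma_0$ makes the second positive, giving a strictly positive, $h$-independent $C_S$. The main obstacle is this consistency term: the Young weight must carry exactly the factor $h_e$ so that, once the trace inequality trades the boundary norm of $D(\mathbf{v}_h)$ for a bulk norm at the cost of $h_T^{-1}$, the mesh sizes cancel ($h_e\le h_T$) and the constant is $h$-independent, while the same weight must leave room in the penalty, which is what couples $C_0$, $\gamma$, and $\xi$ through $\nu/C_0<\gamma$ and $C_0<\xi/(C_5^2\nu)$. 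A secondary subtlety, already resolved above, is transferring the continuous Korn--Poincar\'e coercivity to the nonconforming space $\mathrm{Z}_h\not\subset\mathrm{Z}$.
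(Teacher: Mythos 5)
Your proposal is correct and follows essentially the same route as the paper's proof: diagonal evaluation of $\mathbf{A}_h$, the continuous coercivity argument of Lemma~\ref{Z} for the viscous and tangential terms, a weighted Young inequality with the $h_e$-scaled parameter $C_0$ on the consistency term, the discrete trace inequality of Lemma~\ref{qa2} together with $h_e\le h_T$, and assembly into $\left(\xi-\nu C_5^2 C_0\right)\|\nabla\mathbf{v}_h\|_{0,\Omega}^2+\left(\gamma-\nu/C_0\right)\sum_E h_e^{-1}\|\mathbf{v}_h\cdot\mathbf{n}\|_{0,E}^2$ with identical parameter constraints. In fact you are slightly more careful than the paper on one point it passes over silently, namely justifying that the Korn--Poincar\'e bound of Lemma~\ref{anj} applies to elements of the nonconforming space $\mathrm{Z}_h\not\subset\mathrm{Z}$ because it relies only on the homogeneous condition on $\Gamma_D$.
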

  		\begin{proof}
  		For the proof, we use \eqref{F} to obtain 
  			$$
  			\begin{aligned}
  			\mathbf{A}_h(\mathbf{v}_h, \mathbf{v}_h) &=\frac{\nu}{2}(D(\mathbf{v}_h), D(\mathbf{v}_h)) + \sum_{E \in \mathcal{E}_{\text{Nav}}} \int_{E} \beta \sum_i\left(\boldsymbol{\tau}^i \cdot \mathbf{v}_h\right)\left(\boldsymbol{\tau}^i \cdot \mathbf{v}_h\right) d s  \\ & - \sum_{E \in \mathcal{E}_{\text{Nav}}}  2 \int_{E} \mathbf{n}^t \nu D(\mathbf{v}_h) \mathbf{n}(\mathbf{n} \cdot \mathbf{v}_h) ds + \frac{\gamma}{h_e} \int_{E} (\mathbf{v}_h \cdot \mathbf{n})(\mathbf{v}_h \cdot \mathbf{n}) ds.
  			\end{aligned}
  			$$
  			Using Lemma~\ref{Z}, Lemma~\ref{qa2}, Cauchy Schwarz and H\"{o}lder's inequalities, the following estimate can be established 
  			$$
  			\begin{aligned}
  			\mathbf{A}_h(\mathbf{v}_h, \mathbf{v}_h) &\geq \xi \|\nabla \mathbf{v}_h\|^2_{0,\Omega} -2 \nu \sum_{E \in \mathcal{E}_{\text{Nav}}} \|\mathbf{v}_h \cdot \mathbf{n} \|_{0,E} \| D \mathbf{v}_h \mathbf{n} \|_{0,E} +  \frac{\gamma}{h_e} \sum_{E \in \mathcal{E}_{\text{Nav}}} \|\mathbf{v}_h \cdot \mathbf{n} \|_{0,E}^2 
  			 \\  &\geq \xi \|\nabla \mathbf{v}_h\|^2_{0,\Omega} -2 \nu \sum_{E \in \mathcal{E}_{\text{Nav}}} h_e^{-1/2} \|\mathbf{v}_h \cdot \mathbf{n} \|_{0,E} h_e^{1/2} \| D \mathbf{v}_h \mathbf{n} \|_{0,E} +  \frac{\gamma}{h_e} \sum_{E \in \mathcal{E}_{\text{Nav}}} \|\mathbf{v}_h \cdot \mathbf{n} \|_{0,E}^2  \\ &
  			\geq \xi \|\nabla \mathbf{v}_h\|^2_{0,\Omega} - 2 \nu   \sum_{E \in \mathcal{E}_{\text{Nav}}} \left( \frac{h_e C_0}{2} \| D \mathbf{v}_h  \|_{0,E}^2   +   \frac{h_e^{-1}}{2 C_0}  \|\mathbf{v}_h \cdot \mathbf{n} \|_{0,E}^2 \right)+  \frac{\gamma}{h_e} \sum_{E \in \mathcal{E}_{\text{Nav}}} \|\mathbf{v}_h \cdot \mathbf{n} \|_{0,E}^2  \\ &
  			\geq \xi \|\nabla \mathbf{v}_h\|^2_{0,\Omega} -  \nu {C_{5}^2}{C_0} \sum_{K \in \mathcal{T}_h} \| D \mathbf{v}_h  \|_{0,K}^2 -     
  			 \frac{\nu}{C_0 \gamma} \sum_{E \in \mathcal{E}_{\text{Nav}}} \frac{\gamma}{h_e}	\|\mathbf{v}_h \cdot \mathbf{n} \|_{0,E}^2  +  \frac{\gamma}{h_e} \sum_{E \in \mathcal{E}_{\text{Nav}}} \|\mathbf{v}_h \cdot \mathbf{n} \|_{0,E}^2 
  	         \\& \geq \xi \|\nabla \mathbf{v}_h\|^2_{0,\Omega} - {\nu C_{5}^2}{C_0} \| \nabla \mathbf{v}_h \|^2_{0,\Omega} +  \left(1-\frac{\nu}{C_0 \gamma}\right) \sum_{E \in \mathcal{E}_{\text{Nav}}} \frac{\gamma}{h_e} \|\mathbf{v}_h \cdot \mathbf{n} \|_{0}^2 
  	         \\&  \geq \left(\xi - {\nu C_{5}^2}{C_0}\right) \|\nabla \mathbf{v}_h\|^2_{0,\Omega} + \left(\gamma-\frac{\nu}{C_0}\right) \sum_{E \in \mathcal{E}_{\text{Nav}}} \frac{1}{h_e} \|\mathbf{v}_h \cdot \mathbf{n} \|_{0,E}^2
  	        \\& \geq C_S \|\mathbf{v}_h\|_{1,h}.
  			\end{aligned}
  			$$
  			By selecting the positive parameter $C_0$ that satisfies $C_0 < \frac{\xi}{C_{5}^2 \nu }$, we ensure that $\left(\xi - {\nu C_{5}^2}{ C_0}\right) >0 $. Additionally, we define $C_S = \min \{\xi - {\nu C_{5}^2}{ C_0},\gamma -\frac{\nu}{C_0}\}$ with $\gamma \geq \gamma_0 > \frac{\nu}{C_0}$.
  		\end{proof}
  		Next, we proceed to derive the discrete version of Lemma~\ref{b}.
    \begin{remark}
 The inf-sup condition is associated with the construction of inf-sup stable elements in
incompressible flow modeling. However, this condition is not automatically fulfilled and needs to be verified for specific choices of the approximation spaces $\mathrm{V}_h$ and $\Pi_h$. It is worth mentioning that the Taylor-Hood elements are inf-sup stable, meaning they meet the necessary condition for stability. In fact, the proof can be demonstrated using any pair of inf-sup stable elements.
    \end{remark}
  			\begin{lemma}\label{infsup}
  				There exists $\hat{\theta}>0$ independent of $h$ such that
  				\begin{align}\label{46}
  				\sup _{\mathbf{0} \neq \mathbf{v}_h \in \mathrm{V}_h} \frac{\left | \mathbf{B}_h( \mathbf{v}_h,q_h) \right | }{\|\mathbf{v}_h\|_{1,h}} \geq \hat{\theta} \| q_h \|_{0,\Omega} \quad \forall q \in \Pi_h.
  				\end{align}
  			\end{lemma}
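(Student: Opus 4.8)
The plan is to reduce the claim to the classical (fully Dirichlet) discrete inf-sup condition of the chosen inf-sup stable pair, by restricting the supremum to the subspace of $\mathrm{V}_h$ whose members vanish on the \emph{entire} boundary, because there the Nitsche boundary modification disappears. Concretely, I would introduce
\[
\mathrm{V}_h^{0} \coloneqq \left\{\mathbf{v}_h \in \mathrm{V}_h : \mathbf{v}_h = \mathbf{0} \text{ on } \Gamma\right\} \subseteq \mathrm{V}_h,
\]
which is exactly the standard Taylor--Hood velocity space with homogeneous Dirichlet data on all of $\Gamma$. For any $\mathbf{v}_h \in \mathrm{V}_h^{0}$ we have $\mathbf{v}_h\cdot\mathbf{n}=0$ on $\Gamma_{\text{Nav}}$, so the Nitsche boundary contribution $\mathbf{b}^{\partial}(\mathbf{v}_h,q_h)$ vanishes and $\mathbf{B}_h(\mathbf{v}_h,q_h)=\mathbf{b}(\mathbf{v}_h,q_h)=-(\operatorname{div}\mathbf{v}_h,q_h)_{\Omega}$; moreover the boundary term in the energy norm \eqref{a1} also drops out, leaving $\|\mathbf{v}_h\|_{1,h}=\|\nabla\mathbf{v}_h\|_{0,\Omega}$.

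Next I would bound the supremum over $\mathrm{V}_h$ from below by the supremum over the smaller space $\mathrm{V}_h^{0}$, apply the two reductions above, and then use the elementary inequality $\|\nabla\mathbf{v}_h\|_{0,\Omega}\le\|\mathbf{v}_h\|_{1,\Omega}$ (smaller denominator, larger quotient) to descend to the standard $H^1$ norm:
\[
\sup_{\mathbf{0}\neq\mathbf{v}_h\in\mathrm{V}_h}\frac{|\mathbf{B}_h(\mathbf{v}_h,q_h)|}{\|\mathbf{v}_h\|_{1,h}}
\;\ge\;
\sup_{\mathbf{0}\neq\mathbf{v}_h\in\mathrm{V}_h^{0}}\frac{|(\operatorname{div}\mathbf{v}_h,q_h)|}{\|\nabla\mathbf{v}_h\|_{0,\Omega}}
\;\ge\;
\sup_{\mathbf{0}\neq\mathbf{v}_h\in\mathrm{V}_h^{0}}\frac{|(\operatorname{div}\mathbf{v}_h,q_h)|}{\|\mathbf{v}_h\|_{1,\Omega}}.
\]
Since $q_h\in\Pi_h\subset L_0^2(\Omega)$, the right-hand side is precisely the left side of the classical discrete inf-sup condition for the pair $(\mathrm{V}_h^{0},\Pi_h)$, which holds with an $h$-independent constant $\theta^{\ast}>0$ for Taylor--Hood and, more generally, for any inf-sup stable pair. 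Taking $\hat\theta=\theta^{\ast}$ then yields \eqref{46}.

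The point requiring the most care, and the main obstacle, is guaranteeing that discarding the boundary terms by restricting to $\mathrm{V}_h^{0}$ does not degrade the $h$-uniformity of the constant; this is exactly where the assumed inf-sup stability of the underlying velocity/pressure pair enters, since that subspace is still rich enough to control all of $\Pi_h$ with a mesh-independent constant. One must also verify that the norm inequality is invoked in the favorable direction, which it is, as $\|\nabla\mathbf{v}_h\|_{0,\Omega}\le\|\mathbf{v}_h\|_{1,\Omega}$ makes the middle quotient dominate the rightmost one. If one prefers to avoid the restriction trick entirely, a global integration by parts (legitimate because both $\mathbf{v}_h$ and $q_h$ are globally continuous and $\mathbf{v}_h=\mathbf{0}$ on $\Gamma_D$) yields the identity $\mathbf{B}_h(\mathbf{v}_h,q_h)=(\mathbf{v}_h,\nabla q_h)_{\Omega}$ for all $\mathbf{v}_h\in\mathrm{V}_h$, which makes the cancellation of the Nitsche term transparent and can instead be combined with a Fortin-operator construction.
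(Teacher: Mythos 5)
Your proposal is correct and follows essentially the same route as the paper: the paper likewise bounds the supremum from below by restricting to a subspace on which the boundary form $\mathbf{b}^{\partial}$ and the boundary part of $\|\cdot\|_{1,h}$ vanish (the paper uses the slightly larger slip space $\mathrm{V}_{h,0}=\{\mathbf{v}_h\in\mathrm{V}_h:\mathbf{v}_h\cdot\mathbf{n}=0 \text{ on } \Gamma_{\text{Nav}}\}$, keeping tangential freedom) and then invokes the classical Taylor--Hood inf-sup condition. Your further restriction to the full-Dirichlet subspace $\mathrm{V}_h^{0}$ is, if anything, a cleaner instantiation of the same idea, since the textbook inf-sup result then applies verbatim with an $h$-independent constant.
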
	
  \begin{proof}
 Consider the Taylor-Hood Finite element spaces $\mathrm{V}_h \times \Pi_h$ such that the
discrete inf-sup holds for the bilinear form $\mathbf{b}(\mathbf{v}_h,q_h)$, see \cite[Section 5.5]{MR3235759} i.e.\, there exist a positive constant $\hat{\theta}>$ 0, independent of h such that  
\begin{align*}
  		\sup _{\mathbf{0} \neq \mathbf{v}_h \in \mathrm{V}_{h}} \frac{\left| \mathbf{b}( \mathbf{v}_h,q_h)\right | }{\|\mathbf{v}_h\|_{1,\Omega}} \geq  \hat{\theta} \| q_h \|_{0,\Omega} \quad \forall q_h \in \Pi_h.
\end{align*}
Consider the discrete space of strongly imposed Navier conditions
$$\mathrm{V}_{h,0} = \{ \mathbf{v}_h \in \mathrm{V}_h : \mathbf{v}_h \cdot \mathbf{n} = 0 \,\text{on}\, \Gamma_{\text{nav}}\}.$$
This space naturally yields that $b^\partial(\mathbf{v}_h, q_h) = 0$ for all $\mathbf{v}_h$ in $\mathrm{V}_{h,0}$, so we obtain
\begin{align*}
  		\sup _{\mathbf{0} \neq \mathbf{v}_h \in \mathrm{V}_{h,0}} \frac{\left| \mathbf{B}_h( \mathbf{v}_h,q_h)\right | }{\|\mathbf{v}_h\|_{1,h}} = \sup_{\mathbf{0} \neq \mathbf{v}_h \in \mathrm{V}_{h,0}} \frac{\left| \mathbf{b}(\mathbf{v}_h, q_h)\right|}{\|\mathbf{v}_h\|_{1,h}} = \sup _{\mathbf{0} \neq \mathbf{v}_h \in \mathrm{V}_{h,0}} \frac{\left| \mathbf{b}( \mathbf{v}_h,q_h)\right | }{\|\mathbf{v}_h\|_{1,\Omega}} \geq \hat{\theta} \| q_h \|_{0,\Omega} \quad \forall q_h \in \Pi_h
\end{align*}
in virtue of the classical inf-sup condition.  Now, use this space for a lower bound i.e. 
\begin{align*}
  		\sup _{\mathbf{0} \neq \mathbf{v}_h \in \mathrm{V}_{h}} \frac{\left| \mathbf{B}_h( \mathbf{v}_h,q_h)\right | }{\|\mathbf{v}_h\|_{1,h}} \geq \sup _{\mathbf{0} \neq \mathbf{v}_h \in \mathrm{V}_{h,0}} \frac{\left| \mathbf{B}_h( \mathbf{v}_h,q_h)\right | }{\|\mathbf{v}_h\|_{1,h}} \geq \hat{\theta} \| q_h \|_{0,\Omega} \quad \forall q_h \in \Pi_h. 
\end{align*}
This concludes the proof.
\end{proof}
  		Next, We aim to establish the well-posedness of problem \eqref{E}. We will utilize a fixed-point operator linked to a linearized form of the problem and demonstrate that this operator has a  unique fixed-point. Equivalently, we can prove the well-posedness of problem \eqref{E} using the Banach fixed-point theorem. 
  		\subsubsection{The discrete fixed-point operator and its well-posedness}
  		Let us introduce the set
  		\begin{align}\label{qa5}
  		\mathbf{K}_{\mathbf{h}}=\left\{\mathbf{v}_h \in \mathrm{V}_h:\|\mathbf{v}_h\|_{1,h} \leq {\hat{\alpha}}^{-1}\| \mathbf{f}\|_{\mathrm{V}^{\prime}}\right\},
  		\end{align}
  		with $\hat{\alpha}>0$ being the constant defined below in Theorem~\ref{51}. Now, let us define the discrete fixed point operator as
  		$$
  		\mathcal{J}_h: \mathbf{K}_h \rightarrow \mathbf{K}_h, \quad \mathbf{w}_h \rightarrow \mathcal{J}_h\left(\mathbf{w}_h\right)=\mathbf{u}_h,
  		$$
  		where given $\mathbf{w}_h \in \mathbf{K}_h, 
     \mathbf{u}_h$ represents the first component of the solution of the linearized version of problem \eqref{F}: Find $\left( \mathbf{u}_h,p_h \right) \in \mathrm{V}_h \times \mathrm{\Pi}_h $ 
  		\begin{equation}\label{43}
  		\begin{array}{rlrl}
  		\mathbf{A}_h(\mathbf{u}_h, \mathbf{v}_h)+\mathbf{B}_h(\mathbf{v}_h,p_h)+\mathbf{c}(\mathbf{w}_h ; \mathbf{u}_h, \mathbf{v}_h) & =\mathcal{F}(\mathbf{v}_h) & \forall \mathbf{v}_h \in \mathrm{V}_h, \\
  		\mathbf{B}_h(\mathbf{u}_h, q_h) & = 0 & \forall q_h \in \mathrm{\Pi}_h.
  		\end{array}
  		\end{equation}
  		Based on the above, we can establish the following relation
    \begin{align}\label{44}
  		\mathcal{J}_h\left(\mathbf{u}_h\right)=\mathbf{u}_h \Leftrightarrow\left( \mathbf{u}_h,p_h\right) \in \mathrm{V}_h \times \mathrm{\Pi}_h 
  \quad\text{satisfies}\, \eqref{F}.
\end{align}
  		
  In order to guarantee the well-posedness of the discrete problem \eqref{F}, it is enough to demonstrate the existence of a unique fixed-point for $\mathcal{J}_h$ within the set $\mathbf{K}_h$. However, before delving into the analysis of solvability, we first need to establish the well-definedness of the operator $\mathcal{J}_h$.
Let us introduce the bilinear form.
  		\begin{align}\label{aa}
  		\mathcal{C}_h\left[(\mathbf{u}_h,p_h);( \mathbf{v}_h,q_h)\right]=\mathbf{A}_h(\mathbf{u}_h,\mathbf{v}_h)+\mathbf{B}_h(\mathbf{u}_h,q_h)+\mathbf{B}_h(\mathbf{v}_h,p_h).
  		\end{align}
  		\begin{theorem}\label{51}
  			There exist a positive constant $\hat{\alpha}$ such that 
  			\begin{align*}
  			\sup _{\mathbf{0} \neq (\mathbf{v}_h,q_h) \in \mathrm{V}_h \times \mathrm{\Pi}_h}\frac{\mathcal{C}_h\left[\left( \mathbf{u}_h,p_h\right);\left( \mathbf{v}_h,q_h \right)\right]}{\left\|\left( \mathbf{v}_h,q_h \right)\right\|} \geq \hat{\alpha}\left\|\left(\mathbf{u}_h,p_h\right)\right\| \quad \forall\left( \mathbf{u}_h,p_h\right) \in \mathrm{V}_h \times \mathrm{\Pi}_h ,
  			\end{align*}
  			with
  			\begin{align*}
  			\hat{\alpha} =\frac{C_S \hat{\theta}}{2 C_S + \hat{\theta} +1}.
  			\end{align*}
  			where $C_S$ and $\hat{\theta}$ are the coercivity and inf-sup stability constants. 
  		\end{theorem}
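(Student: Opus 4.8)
The statement is the discrete counterpart of the continuous inf-sup estimate \eqref{28}, and I would prove it exactly as in the continuous case, by invoking the abstract stability result for saddle-point forms \cite[Proposition 2.36]{MR4242224}. The three hypotheses that result requires are now all available at the discrete level: coercivity of $\mathbf{A}_h$ on the discrete kernel $\mathrm{Z}_h$ with the $h$-independent constant $C_S$ (Lemma~\ref{S}), the discrete inf-sup condition for $\mathbf{B}_h$ with the $h$-independent constant $\hat{\theta}$ (Lemma~\ref{infsup}), and the continuity of $\mathbf{A}_h$ and $\mathbf{B}_h$ in the $\|\cdot\|_{1,h}$ norm (Theorem~\ref{qa4}). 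Since all three constants are independent of $h$, the resulting $\hat{\alpha}$ is independent of $h$ as well, which is the decisive point for the forthcoming convergence analysis; the explicit value mirrors the continuous constant \eqref{29} with $\xi,\theta$ replaced by $C_S,\hat{\theta}$.

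Concretely, the construction I would carry out is as follows. Given a nonzero pair $(\mathbf{u}_h, p_h) \in \mathrm{V}_h \times \Pi_h$, first use Lemma~\ref{infsup} to select $\mathbf{z}_h \in \mathrm{V}_h$, normalized so that $\|\mathbf{z}_h\|_{1,h} = \|p_h\|_{0,\Omega}$, with $\mathbf{B}_h(\mathbf{z}_h, p_h) \ge \hat{\theta}\,\|p_h\|_{0,\Omega}^2$. Then I would test $\mathcal{C}_h$ against $(\mathbf{v}_h, q_h) = (\mathbf{u}_h + \delta\mathbf{z}_h, -p_h)$ for a parameter $\delta>0$ to be fixed at the end. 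By the definition \eqref{aa} the two copies of $\mathbf{B}_h(\mathbf{u}_h, \cdot)$ cancel, leaving
\[
\mathcal{C}_h[(\mathbf{u}_h,p_h);(\mathbf{v}_h,q_h)] = \mathbf{A}_h(\mathbf{u}_h,\mathbf{u}_h) + \delta\,\mathbf{A}_h(\mathbf{u}_h,\mathbf{z}_h) + \delta\,\mathbf{B}_h(\mathbf{z}_h,p_h).
\]
Here the last term is bounded below by $\delta\hat{\theta}\|p_h\|_{0,\Omega}^2$, the cross term is controlled by the continuity of $\mathbf{A}_h$ (Theorem~\ref{qa4}) together with Young's inequality, and the diagonal term $\mathbf{A}_h(\mathbf{u}_h,\mathbf{u}_h)$ is where the velocity coercivity must enter. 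After bounding $\|(\mathbf{v}_h,q_h)\|$ by $\|(\mathbf{u}_h,p_h)\|$ (using $\|\mathbf{z}_h\|_{1,h}=\|p_h\|_{0,\Omega}$) and optimizing over $\delta$, one recovers the claimed inf-sup constant $\hat{\alpha} = \tfrac{C_S \hat{\theta}}{2C_S + \hat{\theta} + 1}$.

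The main obstacle, and the only step requiring real care, is that Lemma~\ref{S} furnishes coercivity of $\mathbf{A}_h$ only on the discrete kernel $\mathrm{Z}_h$, whereas the diagonal term above involves a general $\mathbf{u}_h \in \mathrm{V}_h$. As in \cite[Proposition 2.36]{MR4242224}, this is dealt with by splitting $\mathbf{u}_h = \mathbf{u}_h^0 + \mathbf{u}_h^\perp$ with $\mathbf{u}_h^0 \in \mathrm{Z}_h$, noting that $\mathbf{B}_h(\mathbf{u}_h,\cdot) = \mathbf{B}_h(\mathbf{u}_h^\perp,\cdot)$ so that the complementary part $\mathbf{u}_h^\perp$ is itself controlled by the inf-sup constant $\hat{\theta}$, while coercivity is applied to $\mathbf{u}_h^0$; the stated form of $\hat{\alpha}$ emerges precisely from balancing these two contributions against the pressure term. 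I emphasize that, because $\mathrm{Z}_h$ is not contained in the continuous kernel $\mathrm{Z}$ (Nitsche's method being non-conforming), it is essential that the coercivity and inf-sup constants entering this balance are the genuinely discrete, $h$-independent ones provided by Lemmas~\ref{S} and~\ref{infsup}, and not their continuous analogues.
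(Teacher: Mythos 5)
Your proposal coincides with the paper's proof, which likewise obtains the discrete inf-sup bound by citing \cite[Proposition 2.36]{MR4242224} together with exactly the three ingredients you list: continuity from Theorem~\ref{qa4}, coercivity of $\mathbf{A}_h$ on $\mathrm{Z}_h$ from Lemma~\ref{S}, and the discrete inf-sup condition for $\mathbf{B}_h$ from Lemma~\ref{infsup}. Your second and third paragraphs merely unfold the internal mechanics of that cited abstract result (which the paper leaves entirely to the reference), so the approach is essentially identical and correct.
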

\begin{proof}
    Owing to Theorem~\ref{qa4}, it is clear that $\mathcal{C}_h\left[\cdot ; \cdot \right]$ is bounded. Moreover, from Lemma~\ref{S}, Lemma~\ref{infsup}, and \cite[Proposition 2.36]{MR4242224}, it is not difficult to see that above inf-sup condition holds.
\end{proof}

Now, we are in position to establish the well-posedness of $\mathcal{J}_{\text {h }}$.
  		\begin{theorem}\label{qa7}
  			Assume that
  			\begin{align}\label{53}
  			\frac{2}{\hat{\alpha}^2}\|\mathbf{f}\|_{\mathrm{V}^{\prime}}\leq 1,
  			\end{align}
  			Then, given $\mathbf{w}_h \in \mathbf{K}_h$, there exists a unique $\mathbf{u}_h \in \mathbf{K}_h$ such that $\mathcal{J}_h\left(\mathbf{w}_h\right)=\mathbf{u}_h$.	
  		\end{theorem}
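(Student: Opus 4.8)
The plan is to read Theorem~\ref{qa7} as the assertion that $\mathcal{J}_h$ is a well-defined self-map of $\mathbf{K}_h$, which splits into two tasks: first, that for each frozen $\mathbf{w}_h \in \mathbf{K}_h$ the linear saddle-point problem \eqref{43} has a unique solution $(\mathbf{u}_h, p_h) \in \mathrm{V}_h \times \Pi_h$, so that $\mathbf{u}_h = \mathcal{J}_h(\mathbf{w}_h)$ is unambiguously defined; and second, that the velocity component obeys the a priori bound $\|\mathbf{u}_h\|_{1,h} \leq \hat{\alpha}^{-1}\|\mathbf{f}\|_{\mathrm{V}^{\prime}}$, which places it back inside $\mathbf{K}_h$.

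For the first task, since \eqref{43} is a square linear system on the finite-dimensional space $\mathrm{V}_h \times \Pi_h$, existence follows from uniqueness, so I would assume two solutions and subtract them to obtain a difference $(\mathbf{e}_h, \pi_h)$ solving the homogeneous problem. The second equation forces $\mathbf{e}_h \in \mathrm{Z}_h$, so testing the first with $\mathbf{v}_h = \mathbf{e}_h$ annihilates the pressure coupling (because $\mathbf{B}_h(\mathbf{e}_h, \pi_h) = 0$) and leaves $\mathbf{A}_h(\mathbf{e}_h, \mathbf{e}_h) + \mathbf{c}(\mathbf{w}_h; \mathbf{e}_h, \mathbf{e}_h) = 0$. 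Invoking the discrete ellipticity of Lemma~\ref{S} together with the continuity of the trilinear form from Theorem~\ref{qa4} yields $(C_S - C_8\|\mathbf{w}_h\|_{1,h})\|\mathbf{e}_h\|_{1,h}^2 \leq 0$; since $\mathbf{w}_h \in \mathbf{K}_h$ bounds $\|\mathbf{w}_h\|_{1,h}$ by $\hat{\alpha}^{-1}\|\mathbf{f}\|_{\mathrm{V}^{\prime}}$, the smallness hypothesis \eqref{53} renders the bracketed factor strictly positive and forces $\mathbf{e}_h = 0$. The discrete inf-sup of Lemma~\ref{infsup} then gives $\pi_h = 0$, establishing uniqueness and hence solvability. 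Equivalently, one may perturb the global inf-sup of Theorem~\ref{51} by the frozen convective form and apply the Banach--Ne\v{c}as--Babu\v{s}ka theorem directly.

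For the second task, I would exploit that the solution velocity itself lies in $\mathrm{Z}_h$ and choose $\mathbf{v}_h = \mathbf{u}_h$ in the first equation of \eqref{43}; once again the pressure term drops and one is left with $\mathbf{A}_h(\mathbf{u}_h, \mathbf{u}_h) + \mathbf{c}(\mathbf{w}_h; \mathbf{u}_h, \mathbf{u}_h) = \mathcal{F}(\mathbf{u}_h)$. Bounding $\mathcal{F}(\mathbf{u}_h)$ by $\|\mathbf{f}\|_{\mathrm{V}^{\prime}}\|\mathbf{u}_h\|_{1,h}$ via the norm equivalence on $\mathrm{V}_h$, the convective term through Theorem~\ref{qa4}, and $\mathbf{A}_h(\mathbf{u}_h, \mathbf{u}_h)$ from below through Lemma~\ref{S}, I arrive at $(C_S - C_8\|\mathbf{w}_h\|_{1,h})\|\mathbf{u}_h\|_{1,h} \leq \|\mathbf{f}\|_{\mathrm{V}^{\prime}}$. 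Feeding in $\|\mathbf{w}_h\|_{1,h} \leq \hat{\alpha}^{-1}\|\mathbf{f}\|_{\mathrm{V}^{\prime}}$ and the threshold \eqref{53} then closes the estimate into the defining bound of $\mathbf{K}_h$, so that $\mathbf{u}_h \in \mathbf{K}_h$ and $\mathcal{J}_h$ maps $\mathbf{K}_h$ into itself.

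The principal obstacle is that the convection form $\mathbf{c}(\mathbf{w}_h; \cdot, \cdot)$ is \emph{not} skew-symmetric in this non-stabilized formulation, so $\mathbf{c}(\mathbf{w}_h; \mathbf{u}_h, \mathbf{u}_h)$ does not vanish and must be absorbed rather than cancelled. Controlling it is precisely where the smallness of the data \eqref{53} and the confinement $\mathbf{w}_h \in \mathbf{K}_h$ act in tandem, and the delicate point is the bookkeeping of the constants $C_S$, $C_8$, $\hat{\theta}$ and $\hat{\alpha}$ so that the single smallness threshold simultaneously guarantees invertibility of the linearized operator and the self-map property that makes $\mathcal{J}_h$ well-defined.
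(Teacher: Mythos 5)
Your proposal is correct, but it reaches the conclusion by a genuinely different route than the paper. The paper never decouples velocity and pressure: it forms the perturbed bilinear form $\mathcal{A}_{\mathbf{w}_h}=\mathcal{C}_h+\mathbf{c}(\mathbf{w}_h;\cdot,\cdot)$ on the product space $\mathrm{V}_h\times\Pi_h$, treats the frozen convective term as a perturbation of the global inf-sup of Theorem~\ref{51}, verifies \emph{both} Banach--Ne\v{c}as--Babu\v{s}ka conditions (the transposed one via the symmetry of $\mathcal{C}_h$), and reads off unique solvability of \eqref{43} together with the quantitative inequality \eqref{35}, $\sup\,\mathcal{A}_{\mathbf{w}_h}/\|(\mathbf{v}_h,q_h)\|\geq \tfrac{\hat{\alpha}}{2}\|(\mathbf{u}_h,p_h)\|$, which simultaneously bounds velocity \emph{and} pressure and is reused verbatim later (in the contraction argument of Theorem~\ref{van} and the C\'ea estimate of Theorem~\ref{RR1}). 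You instead run the classical Brezzi-type splitting: the homogeneous second equation places the difference in $\mathrm{Z}_h$, kernel ellipticity (Lemma~\ref{S}) absorbs $\mathbf{c}(\mathbf{w}_h;\mathbf{e}_h,\mathbf{e}_h)$, existence follows from uniqueness because \eqref{43} is a square finite-dimensional linear system, the pressure is recovered via Lemma~\ref{infsup}, and the self-map bound comes from testing with $\mathbf{u}_h\in\mathrm{Z}_h$. Your route is more elementary (no adjoint inf-sup condition, no product-space machinery), and your observation that $\mathbf{c}$ is \emph{not} skew-symmetrized in this formulation, so that $\mathbf{c}(\mathbf{w}_h;\mathbf{u}_h,\mathbf{u}_h)$ must be absorbed rather than cancelled, is precisely the right concern; what the paper's route buys is the pressure estimate for free and an inequality that feeds the downstream proofs. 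One quantitative point you flag but do not close: your self-map step requires $C_S-C_8\|\mathbf{w}_h\|_{1,h}\geq\hat{\alpha}$, which does not follow from \eqref{53} for arbitrary $C_8$. With the normalization $C_8=1$ that the paper itself uses silently (its proof of this theorem drops the constant from Theorem~\ref{qa4}), the bound $\|\mathbf{w}_h\|_{1,h}\leq\hat{\alpha}/2$ from \eqref{34} reduces the requirement to $\hat{\alpha}\leq\tfrac{2}{3}C_S$, which is automatic from $\hat{\alpha}=\tfrac{C_S\hat{\theta}}{2C_S+\hat{\theta}+1}$ whenever $\hat{\theta}\leq 4C_S+2$, true here since $\hat{\theta}$ is dominated by the continuity constant of the divergence form. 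The paper is no more careful on this score: its final chain $\|\mathbf{u}_h\|_{1,h}\leq\|(\mathbf{u}_h,p_h)\|\leq\hat{\alpha}^{-1}\|\mathbf{f}\|_{\mathrm{V}^{\prime}}$ quietly loses the same factor of two (the inf-sup constant available at that point is $\hat{\alpha}/2$, not $\hat{\alpha}$), so your bookkeeping caveat applies equally to both arguments and does not disqualify yours.
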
 
  		\begin{proof}
  			Given $\mathbf{w}_h \in \mathbf{K}_h$, we begin by defining the bilinear form:
  			\begin{align}\label{31}
  			\mathcal{A}_{\mathbf{w}_h}\left[(\mathbf{u}_h,p_h);(\mathbf{v}_h,q_h)\right]:=\mathcal{C}_h\left[( \mathbf{u}_h,p_h);( \mathbf{v}_h,q_h)\right]+\mathbf{c}(\mathbf{w}_h ; \mathbf{u}_h, \mathbf{v}_h),
  			\end{align}
  			where $\mathcal{C}_h$ and $\mathbf{c}$ are the forms defined in Theorem~\ref{51} and \eqref{mn}, respectively, that is
  			$$
  			\mathcal{A}_{\mathbf{w}_h}\left[(\mathbf{u}_h,p_h);(\mathbf{v}_h,q_h)\right]=\mathbf{A}_h(\mathbf{u}_h, \mathbf{v}_h)+\mathbf{B}_h(\mathbf{u}_h,q_h)+\mathbf{B}_h(\mathbf{v}_h,q_h)+\mathbf{c}(\mathbf{w}_h ; \mathbf{u}_h, \mathbf{v}_h).
  			$$
  			Then, problem \eqref{F} can be rewritten equivalently as: Find $(\mathbf{u}_h,p_h) \in \mathrm{V}_h \times \Pi_h$, such that
  			\begin{align}\label{32}
  			\mathcal{A}_{\mathbf{w}_h}\left[(\mathbf{u}_h,p_h);(\mathbf{v}_h,q_h)\right]=\mathcal{F}(\mathbf{v}_h) \quad \forall( \mathbf{v}_h,q_h) \in \mathrm{V}_h \times \Pi_h.
  			\end{align}
  			First, we establish the well-posedness of $\mathcal{J}$ in order to demonstrate the well posedness of problem \eqref{32} using the Banach Ne\v{c}as Babu\v{s}ka theorem  \cite[Theorem 2.6] {MR4242224}.
  		     Consider $(\mathbf{u}_h,p_h),( \hat{\mathbf{v}}_h,\hat{q}_h) \in \mathrm{V}_h \times \Pi_h$ with $(\hat{\mathbf{v}}_h,\hat{q}_h) \neq$ 0, from Theorem~\ref{qa4} we can observe that
  			$$
  			\begin{aligned}
  			\sup _{\mathbf{0} \neq(\mathbf{v}_h,p_h) \in \mathrm{V}_h \times \Pi_h} \frac{\mathcal{A}_{\mathbf{w}_h}\left[(\mathbf{u}_h,p_h);(\mathbf{v}_h,q_h)\right]}{\|(\mathbf{v}_h,q_h)\|} & \geq \frac{|\mathcal{C}_h\left[(\mathbf{u}_h,p_h);( \hat{\mathbf{v}_h},\hat{q}_h)\right]|}{\|( \hat{\mathbf{v}_h},\hat{q}_h)\|}-\frac{|\mathbf{c}(\mathbf{w}_h ; \mathbf{u}_h, \hat{\mathbf{v}}_h)|}{\|(\hat{\mathbf{v}}_h,\hat{q}_h)\|} \\
  			& \geq \frac{|\mathcal{C}_h\left[(\mathbf{u}_h,p_h);( \hat{\mathbf{v}_h},\hat{q}_h)\right]|}{\|( \hat{\mathbf{v}_h},\hat{q}_h)\|}-\|\mathbf{w}_h\|_{1,h}\|(\mathbf{u}_h,p_h)\| ,
  			\end{aligned}
  			$$
  			which together with Theorem~\ref{51} and the fact that $(\hat{\mathbf{v}}_h, \hat{p}_h)$ is arbitrary, implies
  			\begin{align}\label{33}
  			\sup _{\mathbf{0} \neq (\mathbf{v}_h,p_h) \in \mathrm{V}_h \times \Pi_h} \frac{\mathcal{A}_{\mathbf{w}_h}\left[(\mathbf{u}_h,p_h);(\mathbf{v}_h,q_h)\right]}{\|(\mathbf{v}_h,p_h)\|} \geq\left(\hat{\alpha}-\|\mathbf{w}_h\|_{1,h}\right)\|( \mathbf{u}_h,p_h)\|.
  			\end{align}
  			Hence, from the definition of set $\mathbf{K}_h$ see \eqref{qa5}, and assumption \eqref{53}, we easily get
  			\begin{align}\label{34}
  			\|\mathbf{w}_h\|_{1,h} \leq \frac{1}{\hat{\alpha}} \|\mathbf{f}\|_{\mathrm{V}^{\prime}} \leq \frac{\hat{\alpha}}{2},
  			\end{align}
  			and then, combining \eqref{33} and \eqref{34}, we obtain
  			\begin{align}\label{35}
  			\sup _{\mathbf{0} \neq  (\mathbf{v}_h,p_h) \in \mathrm{V}_h \times \Pi_h} \frac{\mathcal{A}_{\mathbf{w}_h}\left[(\mathbf{u}_h,p_h);(\mathbf{v}_h,q_h)\right]}{\|(\mathbf{v}_h,q_h)\|} \geq \frac{\hat{\alpha}}{2}\|(\mathbf{u}_h,p_h)\| \quad \forall(\mathbf{u}_h,p_h) \in  \mathrm{V}_h \times \Pi_h.
  			\end{align}
  			On the other hand, for a given $ (\mathbf{u}_h,q_h) \in \mathrm{V}_h \times \Pi_h$, we observe that
  			$$
  			\begin{aligned}
  			\sup_{\mathbf{0} \neq (\mathbf{v}_h,q_h) \in \mathbf{V}_h \times \Pi_h} \mathcal{A}_{\mathrm{w}_h}\left[(\mathbf{v}_h,q_h);(\mathbf{u}_h,p_h)\right] & \geq \sup_{\mathbf{0} \neq(\mathbf{v}_h,q_h) \in \mathrm{V}_h \times \Pi_h} \frac{\mathcal{A}_{\mathbf{w}_h}\left[(\mathbf{v}_h,q_h);(\mathbf{u}_h,p_h)\right]}{\|(\mathbf{v}_h,q_h)\|} \\
  			& =\sup _{\mathbf{0} \neq(\mathbf{v}_h,q_h) \in \mathrm{V}_h \times \Pi_h} \frac{\mathcal{C}_h\left[(\mathbf{v}_h,q_h);(\mathbf{u}_h,q_h)\right]+\mathbf{c}(\mathbf{w}_h ; \mathbf{v}_h, \mathbf{u}_h)}{\|(\mathbf{v}_h,q_h)\|},
  			\end{aligned}
  			$$
  			from which,
  			$$
  			\begin{aligned}
  			\sup _{\mathbf{0} \neq (\mathbf{v}_h,q_h) \in \mathrm{V}_h \times \Pi_h} \mathcal{A}_{\mathbf{w}_h}\left[( \mathbf{v}_h,q_h);(\mathbf{u}_h,q_h)\right] & \geq \sup _{\mathbf{0} \neq(\mathbf{v}_h,q_h) \in \mathrm{V}_h \times \Pi_h}\frac{|\mathcal{C}_h\left[(\mathbf{v}_h,q_h);( \mathbf{u}_h,p_h)\right]+\mathbf{c}(\mathbf{w}_h ; \mathbf{v}_h, \mathbf{u}_h)|}{\|( \mathbf{v}_h,q_h)\|} \\
  			& \geq \sup _{\mathbf{0} \neq(\mathbf{v}_h,q_h) \in \mathrm{V}_h \times \Pi_h} \frac{|\mathcal{C}_h\left[(\mathbf{v}_h,q_h);(\mathbf{u}_h,p_h)\right]|}{\|( \mathbf{v}_h,q_h)\|}-\frac{|\mathbf{c}(\mathbf{w}_h ; \mathbf{v}_h, \mathbf{u}_h)|}{\|( \mathbf{v}_h,q_h)\|},
  			\end{aligned}
  			$$
  			for all $\mathbf{0} \neq (\mathbf{v}_h,q_h) \in \mathrm{V}_h \times \Pi_h $, which together with Theorem~\ref{qa4}, implies
  			\begin{align}\label{36}
  			\sup _{\mathbf{0} \neq (\mathbf{v}_h,q_h) \in \mathrm{V}_h \times \Pi_h} \mathcal{A}_{\mathbf{w}_h}\left[(\mathbf{v}_h,q_h);(\mathbf{u}_h,p_h)\right] \geq \sup _{\mathbf{0} \neq ( \mathbf{v}_h,q_h) \in \mathrm{V}_h \times \Pi_h} \frac{\mathcal{C}_h\left[( \mathbf{v}_h,q_h);(\mathbf{u}_h,p_h)\right]}{\|(\mathbf{v}_h,q_h)\|}-\|\mathbf{w}_h\|_{1,h}\|( \mathbf{u}_h,p_h)\|.
  			\end{align}
  			Therefore, using the fact that $\mathcal{C}_h\left[\cdot ;\, \cdot\right]$ is symmetric, from the inequality in Theorem~\ref{51} and \eqref{36} we obtain
  			$$
  			\sup _{\mathbf{0} \neq (\mathbf{v}_h,q_h) \in \mathrm{V}_h \times \Pi_h} \mathcal{A}_{\mathbf{w}_h}\left[( \mathbf{v}_h,q_h);(\mathbf{u}_h,p_h)\right] \geq \hat{\alpha} \|( \mathbf{u}_h,p_h)\|-\|\mathbf{w}_h\|_{1,h}\|( \mathbf{u}_h,p_h)\|,
  			$$
  			which combined with \eqref{34}, yields
  			\begin{align}\label{37}
  			\sup _{\mathbf{0} \neq (\mathbf{v}_h,q_h) \in \mathrm{V}_h \times \Pi_h} \mathcal{A}_{\mathbf{w}_h}\left[(\mathbf{v}_h,q_h);(\mathbf{u}_h,p_h)\right] \geq \frac{\hat{\alpha}}{2}\|(\mathbf{u}_h,p_h)\|>0 \quad \forall(\mathbf{u}_h,p_h) \in \mathrm{V}_h \times \Pi_h,(\mathbf{u}_h,p_h) \neq 0.
  			\end{align}
  			By examining \eqref{35} and \eqref{37}, we can deduce that $\mathcal{A}_{\mathbf{w}_h}\left( \cdot , \cdot\right)$ satisfies the conditions of the Banach Ne\v{c}as Babu\v{s}ka theorem  \cite[Theorem 2.6]{MR4242224}. This guarantees the existence of a unique solution $(\mathbf{u}_h,p_h) \in \mathrm{V}_h \times \Pi_h$ to \eqref{F}, or equivalently, the existence of a unique $\mathbf{u}_h \in \mathrm{V}_h $ such that $\mathcal{J}_h(\mathbf{w}_h)=\mathbf{u}_h$. Furthermore, from \eqref{35} and \eqref{32} we derive the following inequality:
  			$$
  			\|\mathbf{u}_h\|_{1,h} \leq\|(\mathbf{u}_h,p_h)\| \leq \frac{1}{\hat{\alpha}} \|\mathbf{f}\|_{\mathrm{V}^{\prime}}.
  			$$
  		This concludes the proof by showing that $\mathbf{u_h}$ $\in$ $\mathbf{K_h}$.	
  		\end{proof} 
  		\subsubsection{Well-posedness of the discrete problem}
  		The subsequent theorem establishes the well-posedness of Nitsche's scheme \eqref{F}.
  		\begin{theorem}\label{van}
  			Let $\mathbf{f} \in \mathrm{V}^{\prime}$  such that
  			\begin{align}\label{55}
  			\frac{2}{\hat{\alpha}^2}\|\mathbf{f}\|_{\mathrm{V}^{\prime}}\leq 1.
  			\end{align}
  			Then, there exists a unique $\left( \mathbf{u}_h,p_h\right) \in \mathrm{V}_h \times \mathrm{\Pi}_h$ solution to \eqref{E}. In addition, there exists $C>0$, independent of $h$, such that
  			\begin{align}\label{qa8}
  			\left\|\mathbf{u}_h\right\|_{1,h}+\left\|p_h\right\|_{0,\Omega} \leq C\|\mathbf{f}\|_{\mathrm{V}^{\prime}}.
  			\end{align}
  		\end{theorem}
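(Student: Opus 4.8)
The plan is to invoke the Banach fixed-point theorem for the operator $\mathcal{J}_h$ on the set $\mathbf{K}_h$, mirroring the continuous argument behind Theorem~\ref{c}. Theorem~\ref{qa7} already guarantees, under assumption \eqref{55}, that $\mathcal{J}_h$ is well-defined and maps $\mathbf{K}_h$ into itself; since $\mathbf{K}_h$ defined in \eqref{qa5} is a nonempty, closed (hence complete) subset of the finite-dimensional space $\mathrm{V}_h$, it only remains to show that $\mathcal{J}_h$ is a contraction. Once this is done, the unique fixed point $\mathbf{u}_h$ exists, and by the equivalence \eqref{44} it is exactly the velocity component of the unique solution $(\mathbf{u}_h, p_h) \in \mathrm{V}_h\times\Pi_h$ of \eqref{E}.

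For the contraction estimate I would take $\mathbf{w}_h^1,\mathbf{w}_h^2 \in \mathbf{K}_h$, set $\mathbf{u}_h^j=\mathcal{J}_h(\mathbf{w}_h^j)$ with associated pressures $p_h^j$, and subtract the two linearized problems \eqref{43}. Writing $(\mathbf{e}_h,\pi_h):=(\mathbf{u}_h^1-\mathbf{u}_h^2,\,p_h^1-p_h^2)$ and using the splitting $\mathbf{c}(\mathbf{w}_h^1;\mathbf{u}_h^1,\mathbf{v}_h)-\mathbf{c}(\mathbf{w}_h^2;\mathbf{u}_h^2,\mathbf{v}_h)=\mathbf{c}(\mathbf{w}_h^1;\mathbf{e}_h,\mathbf{v}_h)+\mathbf{c}(\mathbf{w}_h^1-\mathbf{w}_h^2;\mathbf{u}_h^2,\mathbf{v}_h)$, the difference satisfies $\mathcal{A}_{\mathbf{w}_h^1}\big[(\mathbf{e}_h,\pi_h);(\mathbf{v}_h,q_h)\big]=-\mathbf{c}(\mathbf{w}_h^1-\mathbf{w}_h^2;\mathbf{u}_h^2,\mathbf{v}_h)$ for all $(\mathbf{v}_h,q_h)$. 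I would then apply the global inf-sup bound \eqref{35} for $\mathcal{A}_{\mathbf{w}_h^1}$, which holds with constant $\hat{\alpha}/2$ precisely because $\mathbf{w}_h^1\in\mathbf{K}_h$, together with the continuity of $\mathbf{c}$ from Theorem~\ref{qa4} and the membership bound $\|\mathbf{u}_h^2\|_{1,h}\le\hat{\alpha}^{-1}\|\mathbf{f}\|_{\mathrm{V}^{\prime}}$. Since $\|\mathbf{v}_h\|_{1,h}\le\|(\mathbf{v}_h,q_h)\|$, this yields $\|\mathbf{e}_h\|_{1,h}\le\|(\mathbf{e}_h,\pi_h)\|\le \tfrac{2C_8}{\hat{\alpha}^2}\|\mathbf{f}\|_{\mathrm{V}^{\prime}}\,\|\mathbf{w}_h^1-\mathbf{w}_h^2\|_{1,h}$, and \eqref{55} forces the Lipschitz factor to be strictly below one.

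Finally, the a priori bound \eqref{qa8} follows by evaluating the linearized problem at the fixed point $\mathbf{w}_h=\mathbf{u}_h$: the velocity estimate $\|\mathbf{u}_h\|_{1,h}\le\hat{\alpha}^{-1}\|\mathbf{f}\|_{\mathrm{V}^{\prime}}$ is immediate from $\mathbf{u}_h\in\mathbf{K}_h$, while the pressure is uniquely determined and controlled through the combined inf-sup stability of Theorem~\ref{51} (equivalently Lemma~\ref{infsup}) applied to the momentum residual, giving $\|p_h\|_{0,\Omega}\lesssim\|\mathbf{f}\|_{\mathrm{V}^{\prime}}+\|\mathbf{u}_h\|_{1,h}$; summing the two bounds produces \eqref{qa8} with a constant independent of $h$. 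I expect the main obstacle to be the bookkeeping of the contraction constant: one must keep the trilinear continuity constant $C_8$ and the stability constant $\hat{\alpha}$ explicit and verify that the smallness condition \eqref{55} indeed drives $\tfrac{2C_8}{\hat{\alpha}^2}\|\mathbf{f}\|_{\mathrm{V}^{\prime}}$ below one. This is delicate because, in the nonconforming Nitsche setting, $\mathbf{u}_h$ is only weakly divergence-free, so the usual skew-symmetry that makes $\mathbf{c}(\mathbf{w}_h;\mathbf{e}_h,\mathbf{e}_h)$ vanish is unavailable, and the argument must rely solely on the inf-sup bound \eqref{35} rather than on testing with $\mathbf{e}_h$.
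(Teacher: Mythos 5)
Your proposal is correct and follows essentially the same route as the paper: well-definedness of $\mathcal{J}_h$ from Theorem~\ref{qa7}, the contraction estimate obtained by subtracting the two linearized problems, splitting the trilinear form, and invoking the inf-sup bound \eqref{35} together with $\mathbf{u}_h^2\in\mathbf{K}_h$, and finally the pressure bound via the stability of Theorem~\ref{51} evaluated at the fixed point. Your explicit tracking of the trilinear continuity constant $C_8$ is in fact slightly more careful than the paper, which silently absorbs this constant (effectively taking $C_8=1$) both here and in the proof of Theorem~\ref{qa7}.
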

  		\begin{proof}
  			
  			According to the relations given in \eqref{44}, our aim is to establish well-posedness of \eqref{E}. This can be accomplished by demostrating that $\mathcal{J}_h$ possessess a unique fixed point in $\mathbf{K}_h$ using Banach's fixed point theorem.
  			\newline
  			The validity of Assumption \eqref{55} as shown in Theorem~\ref{qa7}, ensures the well-definedness of $\mathcal{J}_h$.
  			Now, let $\mathbf{w}_{h1}$, $\mathbf{w}_{h2}$, $\mathbf{u}_{h1}$, $\mathbf{u}_{h2}$ $\in \mathbf{K}_h$, be such that $\mathbf{u}_{h1}=\mathcal{J}_h\left(\mathbf{w}_{h1}\right)$ and $\mathbf{u}_{h2}=\mathcal{J}_h\left(\mathbf{w}_{h2}\right)$. By employing the definition of $\mathcal{J}$ and \eqref{32}, we can conclude the existence of unique $p_{h1}, p_{h2} \in {L}^2(\Omega)$, satisfies the following equations:
  			$$
  			\mathcal{A}_{\mathbf{w}_{h1}}\left[\left(\mathbf{u}_{h1},p_{h1}\right);( \mathbf{v}_h,q_h)\right]=\mathcal{F}(\mathbf{v}_h), \quad \text{and} \quad \mathcal{A}_{\mathbf{w}_{h2}}\left[\left( \mathbf{u}_{h2},p_{h2} \right);( \mathbf{v}_h,q_h)\right]=\mathcal{F}(\mathbf{v}_h) \quad \forall (\mathbf{v}_h,q_h) \in \mathrm{V}_h \times \Pi_h.
  			$$
  			By adding and subtracting appropriate terms, we can derive the following:
  			\begin{align}\label{40}
  			\mathcal{A}_{\mathbf{w}_{h1}}\left[\left( \mathbf{u}_{h1}-\mathbf{u}_{h2}; p_{h1}-p_{h2}\right),( \mathbf{v}_h,q_h)\right]=-\mathbf{c}\left(\mathbf{w}_{h1}-\mathbf{w}_{h2} ;  \mathbf{u}_{h2}, \mathbf{v}_h \right) \quad \forall(\mathbf{v}_h,q_h) \in \mathrm{V}_h \times \Pi_h.
  			\end{align}
  			Given that $\mathbf{w}_{h1} \in \mathbf{K}_h$ and using \eqref{40}, \eqref{35}, and Theorem~\ref{qa4}, we can deduce:
  			$$
  			\begin{aligned}
  			\frac{\hat{\alpha}}{2}\left\|\mathbf{u}_{h1}-\mathbf{u}_{h2}\right\|_{1} & \leq \sup _{\mathbf{0} \neq(\mathbf{v}_h,p_h) \in \mathrm{V}_h \times \Pi_h} \frac{\mathcal{A}_{\mathbf{w}_{h1}}\left[\left( \mathbf{u}_{h1}-\mathbf{u}_{h2},p_{h1}-p_{h2}\right);( \mathbf{v}_h,q_h)\right]}{\|( \mathbf{v}_h,q_h)\|} \\
  			& =\sup _{\mathbf{0} \neq( \mathbf{v}_h,q_h) \in \mathrm{V}_h \times \Pi_h} \frac{-\mathbf{c}\left(\mathbf{w}_{h1}-\mathbf{w}_{h2} ; \mathbf{u}_{h2}, \mathbf{v}_h\right)}{\|(\mathbf{v}_h,q_h)\|} \\
  			& \leq \left\|\mathbf{w}_{h1}-\mathbf{w}_{h2}\right\|_{1,h}\left\|\mathbf{u}_{h2}\right\|_{1,h},
  			\end{aligned}
  			$$
  			which together with the fact that $\mathbf{u}_{h2} \in \mathbf{K}_h$, implies
  			$$
  			\left\|\mathbf{u}_{h1}-\mathbf{u}_{h2}\right\|_{1,h} \leq \frac{1}{\hat{\alpha}} \|\mathbf{f}\|_{\mathrm{V}^{\prime}} \left\|\mathbf{w}_{h1}-\mathbf{w}_{h2}\right\|_{1,h}
  			$$
  			$$
  			\left\|\mathbf{u}_{h1}-\mathbf{u}_{h2}\right\|_{1,h} \leq \frac{\hat{\alpha}}{2}  \left\|\mathbf{w}_{h1}-\mathbf{w}_{h2}\right\|_{1,h}.
  			$$
  			Assumption \eqref{55} directly implies that $\mathcal{J}_h$ is a contraction mapping.
  			Now, to establish the estimate \eqref{qa8}, let $\mathbf{u}_h \in \mathbf{K}_h$ be the unique fixed point of $\mathcal{J}_h$ and let $\mathbf{u}_h \in \mathrm{V}_h$ be the unique solution of \eqref{E} with $( \mathbf{u}_h,p_h) \in \mathrm{V}_h \times \Pi_h $. By the definition of $\mathbf{K}_h$, it is evident that $\mathbf{u}_h$ satisfies the following
  			$$
  			\|\mathbf{u}_h\|_{1,h} \leq {\hat{\alpha}}^{-1}\|\mathbf{f}\|_{\mathrm{V}^{\prime}}.
  			$$
  			Consequently, utilizing \eqref{35} on $\mathcal{A}_{\mathbf{u}_h}$, referring back to the definition of $\mathcal{A}_{\mathbf{u}_h}$ given in \eqref{31}, and using the fact that $( \mathbf{u}_h,p_h)$ satisfies \eqref{E}, we obtain
  			$$
  			\|p_h\|_{\textbf0,\Omega}\leq\|( \mathbf{u}_h,p_h)\| \leq \frac{2}{\hat{\alpha}} \sup _{\mathbf{0} \neq( \mathbf{v}_h,q_h) \in \mathrm{V}_h \times \Pi_h} \frac{\mathcal{A}_{\mathbf{u}_h}\left[(\mathbf{u}_h,p_h);(\mathbf{v}_h,q_h)\right]}{\|( \mathbf{v}_h,q_h)\|}=\frac{2}{\hat{\alpha}} \sup _{\mathbf{0} \neq( \mathbf{v}_h,q_h) \in \mathrm{V}_h \times \Pi_h} \frac{\mathcal{F}(\mathbf{v}_h)}{\|( \mathbf{v}_h,p_h)\|},
  			$$
  			Thus
  			$$
  			\|p_h\|_{0,\Omega} \leq \frac{2}{\alpha} \|\mathbf{f}\|_{\mathrm{V}^{\prime}}.
  			$$	
  		\end{proof}
 Let us denote $\mathrm{I}_h$ be the interpolator operator, Under usual assumptions, the following approximation property hold.
\begin{lemma}\label{error}
   Let there exists $C_1>0$ and $C_2>0$, independent of $h$, such that for each $\mathbf{u} \in \textbf{\textit{H}}^{l+1}(T)$ with $0 \leq l \leq k$, there holds
$$
\begin{gathered}
\left\|\mathbf{u}-\mathrm{I}_h(\mathbf{u})\right\|_{\textbf{\textit{L}}^2(T)} \leq C_1 \frac{h_T^{l+2}}{\rho_T}|\mathbf{u}|_{\textbf{\textit{H}}^{l+1}(T)} \leq \hat{C}_1 h_T^{l+1}|\mathbf{u}|_{\textbf{\textit{H}}^{l+1}(T)} \\
\left|\mathbf{u}-\mathrm{I}_h(\mathbf{u})\right|_{\textbf{\textit{H}}^1(T)} \leq C_2 \frac{h_T^{l+2}}{\rho_T^2}|\mathbf{u}|_{\textbf{\textit{H}}^{l+1}(T)} \leq \hat{C}_2 h_T^l|\mathbf{u}|_{\textbf{\textit{H}}^{l+1}(T)}
\end{gathered}
$$
where $h_T$ is the diameter of $T, \rho_T$ is the diameter of the largest sphere contained in $T$, and $k$ is the degree of the polynomial. 
\end{lemma}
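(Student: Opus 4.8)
The plan is to prove this by the classical combination of affine scaling to a fixed reference element and the Bramble-Hilbert lemma, exploiting that $\mathrm{I}_h$ reproduces polynomials of degree at most $k$ (hence of degree $l$, since $l \le k$). First I would fix a reference simplex $\hat{T}$ and write the affine map $F_T(\hat{\mathbf{x}}) = B_T \hat{\mathbf{x}} + \mathbf{b}_T$ with $F_T(\hat{T}) = T$, together with the standard geometric bounds $\|B_T\| \lesssim h_T$, $\|B_T^{-1}\| \lesssim \rho_T^{-1}$ and $|\det B_T| \sim |T|$, all immediate from the definitions of $h_T$ and $\rho_T$. For $\mathbf{v}$ defined on $T$ set $\hat{\mathbf{v}} = \mathbf{v} \circ F_T$; the change of variables then gives the two-sided seminorm transfer
\begin{align*}
|\hat{\mathbf{v}}|_{\textbf{\textit{H}}^m(\hat{T})} \lesssim \|B_T\|^m |\det B_T|^{-1/2} |\mathbf{v}|_{\textbf{\textit{H}}^m(T)}, \qquad |\mathbf{v}|_{\textbf{\textit{H}}^m(T)} \lesssim \|B_T^{-1}\|^m |\det B_T|^{1/2} |\hat{\mathbf{v}}|_{\textbf{\textit{H}}^m(\hat{T})}.
\end{align*}

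The next step is to use the commutation property $\widehat{\mathrm{I}_h \mathbf{u}} = \hat{\mathrm{I}}\,\hat{\mathbf{u}}$, where $\hat{\mathrm{I}}$ is the interpolation operator on $\hat{T}$, so that the interpolation error transforms like any other function. Applying the first transfer estimate to $\mathbf{v} = \mathbf{u} - \mathrm{I}_h\mathbf{u}$ for $m \in \{0,1\}$, then the Bramble-Hilbert lemma on the fixed domain $\hat{T}$ — which, since $\hat{\mathrm{I}}$ leaves $\mathbf{P}_l(\hat{T})$ invariant, yields $|\hat{\mathbf{u}} - \hat{\mathrm{I}}\hat{\mathbf{u}}|_{\textbf{\textit{H}}^m(\hat{T})} \lesssim |\hat{\mathbf{u}}|_{\textbf{\textit{H}}^{l+1}(\hat{T})}$ for $0 \le m \le l+1$ — and finally the second transfer estimate to return $|\hat{\mathbf{u}}|_{\textbf{\textit{H}}^{l+1}(\hat{T})}$ to $|\mathbf{u}|_{\textbf{\textit{H}}^{l+1}(T)}$, I would reach a bound of the form
$$
|\mathbf{u} - \mathrm{I}_h \mathbf{u}|_{\textbf{\textit{H}}^m(T)} \lesssim \|B_T^{-1}\|^m \|B_T\|^{l+1} |\mathbf{u}|_{\textbf{\textit{H}}^{l+1}(T)}.
$$
Collecting the powers of $\|B_T\|$, $\|B_T^{-1}\|$ and the Jacobian factors $|\det B_T|^{\pm 1/2}$ through the geometric estimates produces exactly the middle inequalities in the two displayed bounds, with numerators $h_T^{l+2}$ and denominators $\rho_T$ and $\rho_T^2$ as stated. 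The right-hand (final) inequalities then follow at once from the shape-regularity of the family $\{\mathcal{T}_h\}_{h>0}$, i.e.\ $h_T/\rho_T \le \sigma$ uniformly, which absorbs the surplus factors $h_T/\rho_T$ into $\hat{C}_1$ and $\hat{C}_2$.

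I expect the only genuine difficulty to be the Bramble-Hilbert step on the reference element, whose proof rests on a compactness/quotient-space argument showing that the $\textbf{\textit{H}}^{l+1}(\hat{T})$-seminorm controls the full norm modulo $\mathbf{P}_l$; everything else is routine bookkeeping of the affine scaling. A secondary point worth flagging is the admissibility of $\mathrm{I}_h$ on $\textbf{\textit{H}}^{l+1}(T)$: for a nodal Lagrange interpolant one needs the embedding $\textbf{\textit{H}}^{l+1}(T) \hookrightarrow \mathbf{C}(\overline{T})$, valid for $l \ge 1$ when $n \le 3$, whereas in the borderline low-regularity case one should instead take $\mathrm{I}_h$ to be a Cl\'ement- or Scott-Zhang-type quasi-interpolant, for which the same scaling argument applies after replacing pointwise evaluation by local averaging. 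Since the statement is entirely classical, I would ultimately cite the corresponding result in \cite{MR4242224} rather than reproduce the compactness argument in full.
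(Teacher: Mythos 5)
Your proposal is correct, and it coincides with the paper's treatment: the paper gives no proof at all, simply citing the classical interpolation theory (Brenner--Scott), and your affine-scaling plus Bramble--Hilbert argument is precisely the standard proof underlying that citation. Your remark on the borderline case $l=0$ (where the nodal Lagrange interpolant is not defined on $\textbf{\textit{H}}^1$ and a Cl\'ement or Scott--Zhang quasi-interpolant is needed) is a genuinely careful observation that the paper glosses over by leaving $\mathrm{I}_h$ unspecified.
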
 
\begin{proof}
    See \cite{MR2373954}.
\end{proof}
  		\section{A priori error bounds}
    \label{sec4}
  		The objective of this section is to establish the convergence of Nitsche's scheme \eqref{E} and determine the rate of convergence. We begin by deriving the corresponding C\`{e}a's estimate.
  		\begin{theorem}\label{RR1}
  			Assume that
  			\begin{align}\label{57}
  			\frac{2}{\alpha \hat{\alpha}} \|\mathbf{f}\|_{\mathrm{V}^{\prime}} \leq \frac{1}{2},
  			\end{align}
  			with $\alpha$ and $\hat{\alpha}$ being the positive constants in \eqref{29} and Theorem~\ref{51}, respectively. Let $(\mathbf{u},p) \in$ $\mathrm{V} \times \Pi$ and $\left(\mathbf{u}_h,p_h \right) \in \mathrm{V}_h \times \mathrm{\Pi}_h$ be the unique solutions of problems \eqref{9} and \eqref{E}, respectively. Then there exists $C_{\text{cea}}>0$, independent of $h$, such that
  			\begin{align}\label{58}
  			\left\|\left( \mathbf{u}-\mathbf{u}_h, p - p_h \right)\right\| \leq C_{\text{cea}} \inf _{\mathbf{0} \neq  \left(\mathbf{v}_h,q_h \right) \in \mathrm{V}_h \times \mathrm{\Pi}_h}\left\|\left(\mathbf{u}-\mathbf{v}_h, p-q_h \right)\right\|.
  			\end{align}	
  		\end{theorem}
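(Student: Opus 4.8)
The plan is to follow the standard quasi-Galerkin / fixed-point argument for nonconforming discretizations of nonlinear saddle-point problems, exploiting the augmented inf-sup stability already established in \eqref{35}. \textbf{First}, since $\mathbf{u}_h$ is the fixed point of $\mathcal{J}_h$ it lies in $\mathbf{K}_h$, so $\|\mathbf{u}_h\|_{1,h} \le \hat{\alpha}/2$ by \eqref{34}; consequently the linearized form $\mathcal{A}_{\mathbf{u}_h}$ satisfies the inf-sup bound \eqref{35} with constant $\hat{\alpha}/2$. Fixing an arbitrary $(\mathbf{v}_h,q_h) \in \mathrm{V}_h \times \Pi_h$ and applying this stability to the discrete pair $(\mathbf{u}_h-\mathbf{v}_h,\, p_h-q_h)$ gives
\[
\frac{\hat{\alpha}}{2}\,\|(\mathbf{u}_h-\mathbf{v}_h,\, p_h-q_h)\| \le \sup_{\mathbf{0}\neq(\mathbf{w}_h,r_h)} \frac{\mathcal{A}_{\mathbf{u}_h}\!\left[(\mathbf{u}_h-\mathbf{v}_h,\, p_h-q_h);(\mathbf{w}_h,r_h)\right]}{\|(\mathbf{w}_h,r_h)\|}.
\]

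\textbf{Second} (consistency), the exact solution satisfies $\mathbf{u}\cdot\mathbf{n}=0$ and the tangential relation in \eqref{2}; integrating the strong form \eqref{1} by parts element-wise, the Nitsche penalty term $\mathbf{a}^\partial_\gamma(\mathbf{u},\mathbf{w}_h)$ and the symmetric term $\mathbf{a}^\partial_c(\mathbf{w}_h,\mathbf{u})$ vanish because $\mathbf{u}\cdot\mathbf{n}=0$, while the surviving boundary contributions reproduce exactly $-\mathbf{a}^\partial_c(\mathbf{u},\mathbf{w}_h)$ together with $\mathbf{a}^\partial_\tau$. This yields strong consistency,
\[
\mathbf{A}_h(\mathbf{u},\mathbf{w}_h)+\mathbf{B}_h(\mathbf{w}_h,p)+\mathbf{c}(\mathbf{u};\mathbf{u},\mathbf{w}_h)=\mathcal{F}(\mathbf{w}_h), \qquad \mathbf{B}_h(\mathbf{u},r_h)=0,
\]
for all $(\mathbf{w}_h,r_h)\in\mathrm{V}_h\times\Pi_h$. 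Subtracting this from the discrete equation \eqref{32} written at the fixed point $\mathbf{w}_h=\mathbf{u}_h$, and observing that $\mathcal{A}_{\mathbf{u}_h}$ and the consistent identity differ only in the advecting argument of $\mathbf{c}$, produces the quasi-orthogonality identity
\[
\mathcal{A}_{\mathbf{u}_h}\!\left[(\mathbf{u}_h-\mathbf{u},\, p_h-p);(\mathbf{w}_h,r_h)\right]=\mathbf{c}(\mathbf{u}-\mathbf{u}_h;\mathbf{u},\mathbf{w}_h).
\]

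\textbf{Third} (split, bound, absorb), I decompose $\mathbf{u}_h-\mathbf{v}_h=(\mathbf{u}_h-\mathbf{u})+(\mathbf{u}-\mathbf{v}_h)$ and likewise for the pressure, so the numerator above equals $\mathcal{A}_{\mathbf{u}_h}[(\mathbf{u}-\mathbf{v}_h,p-q_h);(\mathbf{w}_h,r_h)]+\mathbf{c}(\mathbf{u}-\mathbf{u}_h;\mathbf{u},\mathbf{w}_h)$. The first term is bounded via the continuity estimates of Theorem~\ref{qa4} extended to the augmented space $\mathrm{V}+\mathrm{V}_h$ (on which $\|\cdot\|_{1,h}$ and the Nitsche forms make sense, since $\mathbf{u}$ is regular enough by Lemma~\ref{error}), with constant depending on $C_7,C_9$ and $C_8\|\mathbf{u}_h\|_{1,h}$. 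The remainder is controlled by trilinear continuity, $|\mathbf{c}(\mathbf{u}-\mathbf{u}_h;\mathbf{u},\mathbf{w}_h)|\le C_8\|\mathbf{u}-\mathbf{u}_h\|_{1,h}\|\mathbf{u}\|_{1,h}\|\mathbf{w}_h\|_{1,h}$, where the a priori bound from Theorem~\ref{c} and \eqref{11} gives $\|\mathbf{u}\|_{1,h}\le\alpha^{-1}\|\mathbf{f}\|_{\mathrm{V}'}$. Using $\|\mathbf{u}-\mathbf{u}_h\|_{1,h}\le\|\mathbf{u}-\mathbf{v}_h\|_{1,h}+\|\mathbf{v}_h-\mathbf{u}_h\|_{1,h}$ and absorbing the $\|\mathbf{u}_h-\mathbf{v}_h\|_{1,h}$ contribution on the left leaves
\[
\Big(\tfrac{\hat{\alpha}}{2}-\tfrac{C_8}{\alpha}\|\mathbf{f}\|_{\mathrm{V}'}\Big)\|(\mathbf{u}_h-\mathbf{v}_h,\,p_h-q_h)\| \lesssim \|(\mathbf{u}-\mathbf{v}_h,\,p-q_h)\|.
\]

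\textbf{Closing and obstacle.} The smallness hypothesis \eqref{57}, equivalently $\frac{2}{\alpha\hat{\alpha}}\|\mathbf{f}\|_{\mathrm{V}'}\le\tfrac12$, is exactly what keeps the prefactor $\frac{\hat{\alpha}}{2}-\frac{C_8}{\alpha}\|\mathbf{f}\|_{\mathrm{V}'}$ strictly positive, so it can be divided out; a last triangle inequality $\|(\mathbf{u}-\mathbf{u}_h,p-p_h)\|\le\|(\mathbf{u}-\mathbf{v}_h,p-q_h)\|+\|(\mathbf{v}_h-\mathbf{u}_h,q_h-p_h)\|$ followed by the infimum over $(\mathbf{v}_h,q_h)$ gives \eqref{58}, with $C_{\text{cea}}$ depending on $\hat{\alpha},\alpha,C_7,C_8,C_9$ and $\|\mathbf{f}\|_{\mathrm{V}'}$. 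I expect the main obstacle to be the second step: verifying strong consistency of the nonconforming Nitsche form, carefully tracking which boundary integrals cancel by virtue of $\mathbf{u}\cdot\mathbf{n}=0$, and then handling the nonlinear convective remainder so that its error-dependent part can be re-absorbed on the left — this re-absorption is precisely where hypothesis \eqref{57} is consumed.
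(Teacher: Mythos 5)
Your proposal is correct and follows essentially the same route as the paper's proof: discrete inf-sup stability of the linearized form $\mathcal{A}_{\mathbf{u}_h}$ (inequality \eqref{35}) applied to the discrete error pair, Galerkin orthogonality obtained from consistency of the Nitsche formulation, the splitting identity \eqref{111} for the trilinear term, continuity bounds from Theorem~\ref{qa4}, absorption of the error-dependent convective term via the small-data hypothesis \eqref{57} combined with $\|\mathbf{u}\|_{1}\leq\alpha^{-1}\|\mathbf{f}\|_{\mathrm{V}'}$, and a closing triangle inequality. The only difference is organizational — you state quasi-orthogonality compactly and actually sketch why the Nitsche form is consistent (the paper merely asserts the identity), which is a welcome extra level of care rather than a departure in method.
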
 
  		\begin{proof}
  			In order to simplify the subsequent analysis, we define $\mathbf{e}_{\mathrm{u}} = \mathbf{u}-\mathbf{u}_h$ and  $\mathbf{e}_p = p - p_h$, and for any $\left( \mathbf{z}_h, \zeta_h \right) \in \mathrm{V}_h \times \mathrm{\Pi}_h $, we write
  			\begin{align}\label{59}
  			\mathbf{e}_{\mathbf{u}}=\xi_{\mathbf{u}}+\chi_{\mathbf{u}}=\left(\mathbf{u}-\mathbf{z}_h\right)+\left(\mathbf{z}_h-\mathbf{u}_h\right), \text{and} \quad  \mathbf{e}_p=\xi_p+\chi_p=\left(p-\zeta_h\right)+\left(\zeta_h-p_h\right).
  			\end{align}
  			By recalling the definition of the bilinear forms $\mathcal{C}$ and $\mathcal{C}_h$ in \eqref{27} and \eqref{aa}, respectively, and considering \eqref{9} and \eqref{E}, we can observe the validity of following identities
  			$$
  			\mathcal{C}_h\left[( \mathbf{u},p);(\mathbf{v},q)\right]+\mathbf{c}(\mathbf{u} ; \mathbf{u}, \mathbf{v})= \mathcal{F}(\mathbf{v}) \quad \forall(\mathbf{v},q) \in \mathrm{V} \times \Pi.
  			$$
  			and
  			$$
  			\mathcal{C}_h\left[\left(\mathbf{u}_h,p_h \right);\left(\mathbf{v}_h,q_h\right)\right]+\mathbf{c}\left(\mathbf{u}_h ; \mathbf{u}_h, \mathbf{v}_h\right)=\mathcal{F}\left(\mathbf{v}_h\right) \quad \forall\left( \mathbf{v}_h,q_h \right) \in \mathrm{V}_h \times \mathrm{\Pi}_h.
  			$$
  			Based on these observations, we can deduce the Galerkin orthogonality property
  			\begin{align}\label{60}
  			\mathcal{C}_h\left[(\mathbf{e}_\mathbf{u}, \mathbf{e}_p);( \mathbf{v}_h,q_h)\right]+ \left[\mathbf{c}\left(\mathbf{u} ; \mathbf{u}, \mathbf{v}_h \right)-\mathbf{c}\left(\mathbf{u}_h ; \mathbf{u}_h, \mathbf{v}_h \right)\right]=0  \quad \forall\left( \mathbf{v}_h,q_h \right) \in \mathrm{V}_h \times {\Pi}_h.
  			\end{align}
  			Subsequently, by utilizing the decompositions given in \eqref{59}, the definition of $\mathcal{A}_{\mathrm{w}}$ in \eqref{31} for discrete $\mathcal{A}_{\mathrm{w}_h}$, and the identity
   \begin{align}\label{111}
  \mathbf{c}\left(\mathbf{u} ; \mathbf{u}, \mathbf{v}_h \right)=\mathbf{c}\left(\mathbf{u}-\mathbf{u}_h ; \mathbf{u}, \mathbf{v}_h\right)+\mathbf{c}\left(\mathbf{u}_h ; \mathbf{u}, \mathbf{v}_h\right).
  \end{align}\
     Now, using  \eqref{31}, \eqref{111}, and \eqref{60}, we deduce that for all $\left( \mathbf{v}_h,q_h \right) \in \mathrm{V}_h \times \mathrm{\Pi}_h $, the following relationship holds
  			$$
  			\begin{aligned}
  			\mathcal{A}_{\mathbf{u}_{\mathrm{h}}}\left[\left( \chi_{\mathbf{u}},\chi_p\right);\left(\mathbf{v}_h,q_h \right)\right]&= 
  			 \mathcal{C}_h \left[\left( \chi_{\mathbf{u}};\chi_p \right),\left( \mathbf{v}_h,q_h\right)\right]+\mathbf{c}\left(\mathbf{u}_h;{\chi_\mathbf{u}}, \mathbf{v}_h\right), 
  			 \\& = - \mathcal{C}_h\left[\left( \xi_{\mathbf{u}},\xi_p \right);\left( \mathbf{v}_h,q_h\right)\right] +
  			 \mathcal{C}_h\left[\left( \mathbf{e}_{\mathbf{u}},\mathbf{e}_p \right);\left( \mathbf{v}_h,q_h\right)\right]+\mathbf{c}\left(\mathbf{u}_h;{\chi_\mathbf{u}}, \mathbf{v}_h\right),
      \\& =  - \mathcal{C}_h\left[\left( \xi_{\mathbf{u}},\xi_p \right);\left( \mathbf{v}_h,q_h\right)\right] -  \mathbf{c}\left(\mathbf{u}-\mathbf{u}_h ; \mathbf{u}, \mathbf{v}_h\right)-\mathbf{c}\left(\mathbf{u}_h ; \mathbf{u}, \mathbf{v}_h\right)+ \mathbf{c}\left(\mathbf{u}_h ; \mathbf{u}_h,\mathbf{v}_h\right)+\mathbf{c}\left(\mathbf{u}_h;{\chi_\mathbf{u}}, \mathbf{v}_h\right),
      \\& =  - \mathcal{C}_h\left[\left( \xi_{\mathbf{u}},\xi_p \right);\left( \mathbf{v}_h,q_h\right)\right] -  \mathbf{c}\left(\xi_{\mathbf{u}}+\chi_{\mathbf{u}} ; \mathbf{u}, \mathbf{v}_h\right)-\mathbf{c}\left(\mathbf{u}_h ; \xi_{\mathbf{u}}+\chi_{\mathbf{u}}, \mathbf{v}_h\right)+\mathbf{c}\left(\mathbf{u}_h;{\chi_\mathbf{u}}, \mathbf{v}_h\right),
  			  \\& = - \mathcal{C}_h\left[\left( \xi_{\mathbf{u}},\xi_p \right);\left( \mathbf{v}_h,q_h\right)\right] -  \mathbf{c}\left(\xi_{\mathbf{u}}; \mathbf{u}, \mathbf{v}_h \right)-\mathbf{c}\left(\chi_{\mathbf{u}} ; \mathbf{u},\mathbf{v}_h\right)-\mathbf{c}\left(\mathbf{u}_h;{\xi_\mathbf{u}}, \mathbf{v}_h\right),
  	     	\end{aligned}
  			$$
  			which together with the definition of $\mathcal{C}_h$ given in equation \eqref{aa}, implies
  			\begin{align}\label{61}
  			\mathcal{A}_{\mathbf{u}_{\mathrm{h}}}\left[\left( \chi_{\mathbf{u}},\chi_p\right);\left(\mathbf{v}_h, q_h\right)\right]=& \nonumber -\mathbf{A}_h\left(\xi_\mathbf{u}, \mathbf{v}_h\right)-\mathbf{B}_h\left(\xi_\mathbf{u}, q_h \right)-\mathbf{B}_h\left({\mathbf{v}_h},\xi_p\right)-\mathbf{c}\left(\xi_{\mathbf{u}} ; \mathbf{u}, \mathbf{v}_h\right) 
  			 -\mathbf{c}\left(\chi_{\mathbf{u}} ; \mathbf{u}, \mathbf{v}_h\right)
  			 \\ 
  			 & -\mathbf{c}\left(\mathbf{u}_h ; \xi_{\mathbf{u}}, \mathbf{v}_h\right), 
  			\end{align}
  			for all $\left( \mathbf{v}_h,q_h\right) \in \mathrm{V}_h \times \mathrm{\Pi}_h$.
  			Next, utilizing the discrete inf-sup condition \eqref{37} at the left hand side of \eqref{61}, and applying the continuity properties of $\mathbf{A}, \mathbf{B}$, and $\mathbf{c}$ stated in Theorem~\ref{qa4} to the right hand side of \eqref{61}, we can derive the following
  			\begin{align*}
  			\left\|\chi_\mathbf{u}\right\|_{1,h} +\left\|\chi_p\right\|_{0}  &\lesssim \frac{2}{\hat{\alpha} \| (\mathbf{v}_h,q_h)\|}\bigg(\|\xi_{\mathbf{u}}\|_{1,h} \|\mathbf{v}_h\|_{1,h} + \|\xi_{\mathbf{u}}\|_{1,h} \|q_h\|_{0} + \|\xi_{\mathbf{p}}\|_{0} \|\mathbf{v}_h\|_{1,h} + \|\xi_{\mathbf{u}}\|_{1,h} \|\mathbf{v}_h\|_{1,h} \| \mathbf{u} \|_{1} + \\&   \|\chi_{\mathbf{u}}\|_{1,h} \|\mathbf{v}_h\|_{1,h} \| \mathbf{u} \|_{1} +  \|\xi_{\mathbf{u}}\|_{1,h} \|\mathbf{v}_h\|_{1,h} \| \mathbf{u}_h \|_{1,h} \bigg), \\
  		  &\lesssim \frac{2 }{\hat{\alpha}}\left(\left\|\xi_p\right\|_{0}+\left( 2+\left\|\mathbf{u}_h\right\|_{1,h}+\|\mathbf{u}\|_{1}\right)\left\|\xi_{\mathbf{u}}\right\|_{1,h}+\left\|\chi_{\mathbf{u}}\right\|_{1,h}\|\mathbf{u}\|_{1}\right),\\
  		 &\lesssim  \frac{2 }{\hat{\alpha}}\left(\left\|\xi_p\right\|_{0}+\left( 2+\left\|\mathbf{u}_h\right\|_{1,h}+\|\mathbf{u}\|_{1}\right)\left\|\xi_{\mathbf{u}}\right\|_{1,h}+\left\|\chi_{\mathbf{u}}\right\|_{1,h}\|\mathbf{u}\|_{1}\right),
  	\end{align*}
  			\begin{align}\label{62}
  			 \left\|\chi_p\right\|_{0}+\left(1-\frac{2}{\hat{\alpha}}\|\mathbf{u}\|_{1}\right)\left\|\chi_{\mathbf{u}}\right\|_{1,h}  &\lesssim \frac{2}{\hat{\alpha}}\left(\left\|\xi_p\right\|_{0}+\left\{2+\left\|\mathbf{u}_h\right\|_{1,h}+\|\mathbf{u}\|_{1}\right\}
  			 \left\|{\xi}_{\mathbf{u}}\right\|_{1,h}\right).
  			\end{align}
  			Therefore, by taking into account the fact that $\mathbf{u} \in \mathbf{K}$ and $\mathbf{u}_h \in \mathbf{K}_h$ based respectively on assumptions \eqref{57} and \eqref{62}, we can conclude that
  			\begin{align}\label{63}
  			\left\|\chi_p\right\|_{0}+\left\|\chi_{\mathbf{u}}\right\|_{1,h} \lesssim \left(\left\|\xi_p\right\|_{0}+\left\|\xi_{\mathbf{u}}\right\|_{1,h}\right).
  			\end{align}
            In this way, from \eqref{59}, \eqref{63} and the triangle inequality we obtain
  			$$
  			\left\|\left(\mathbf{e}_p, \mathbf{e}_{\mathbf{u}}\right)\right\| \leq\left\|\left(\chi_p, \chi_{\mathbf{u}}\right)\right\|+\left\|\left({\xi}_p, {\xi}_{\mathbf{u}}\right)\right\| \lesssim \left\|\left({\xi}_p, {\xi}_{\mathrm{u}}\right)\right\|.
  			$$
  		This, together with the fact that  $\left(\mathbf{z}_h,\zeta_h \right) \in \mathrm{V}_h \times \mathrm{\Pi}_h$ is arbitrary, leads to the conclusion of the proof.
  		\end{proof}
  		\begin{theorem}\label{101}
  			Let $(\mathbf{u},p) \in \mathrm{V} \times \Pi$ and $\left( \mathbf{u}_h,p_h\right) \in \mathrm{V}_h \times \mathrm{\Pi}_h$ denotes the unique solutions of the continuous problem \eqref{9} and discrete problem \eqref{E}, respectively, with $\mathbf{f}$   satisfying \eqref{57}. Suppose that
  			$(\mathbf{u},p) \in$ $  (\textbf{\textit{H}}^{l+1}(\Omega) \cap \mathrm{V}) \times ({H}^l(\Omega) \cap \Pi)$ with $l \geq 1$, Then there exists $C_{\text {rate }}>0$, independent of $h$, such that
  			$$
  			\left\|\left( \mathbf{u}-\mathbf{u}_h,p-p_h\right)\right\| \leq C_{\text {rate }} h^{l}\left\{|\mathbf{u}|_{\textbf{\textit{H}}^{l+1}(\Omega)}+|p|_{{H}^{l}(\Omega)}\right\}.
  			$$
  		\end{theorem}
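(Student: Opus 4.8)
The plan is to combine the quasi-optimality guaranteed by the C\'ea estimate of Theorem~\ref{RR1} with the interpolation bounds of Lemma~\ref{error}. Since Theorem~\ref{RR1} gives
$$
\left\|\left(\mathbf{u}-\mathbf{u}_h, p-p_h\right)\right\| \leq C_{\text{cea}} \inf_{(\mathbf{v}_h,q_h)\in\mathrm{V}_h\times\Pi_h}\left\|\left(\mathbf{u}-\mathbf{v}_h, p-q_h\right)\right\|,
$$
it suffices to exhibit one admissible pair $(\mathbf{v}_h,q_h)$ for which the right-hand side is $O(h^l)$. The natural choice is $\mathbf{v}_h = \mathrm{I}_h(\mathbf{u})$ and $q_h = \mathrm{I}_h(p)$, so that
$$
\left\|\left(\mathbf{u}-\mathbf{u}_h, p-p_h\right)\right\| \lesssim \left\|\mathbf{u}-\mathrm{I}_h(\mathbf{u})\right\|_{1,h} + \left\|p-\mathrm{I}_h(p)\right\|_{0,\Omega},
$$
and the remainder of the argument reduces to estimating these two interpolation errors, recalling that the product norm uses the energy norm $\|\cdot\|_{1,h}$ in the velocity slot and the $L^2$ norm in the pressure slot.

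Next I would estimate the velocity term by splitting the energy norm \eqref{a1} into its two contributions. The interior gradient part is immediate: $\|\nabla(\mathbf{u}-\mathrm{I}_h(\mathbf{u}))\|_{0,\Omega}^2 = \sum_{T}|\mathbf{u}-\mathrm{I}_h(\mathbf{u})|_{1,T}^2 \lesssim \sum_T h_T^{2l}|\mathbf{u}|_{\textbf{\textit{H}}^{l+1}(T)}^2 \lesssim h^{2l}|\mathbf{u}|_{\textbf{\textit{H}}^{l+1}(\Omega)}^2$ by the $H^1$ bound in Lemma~\ref{error}. The boundary part $\sum_{E\in\mathcal{E}_{\text{Nav}}} h_e^{-1}\|(\mathbf{u}-\mathrm{I}_h(\mathbf{u}))\cdot\mathbf{n}\|_{0,E}^2$ is the technical heart of the proof. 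Because $\mathbf{u}-\mathrm{I}_h(\mathbf{u})$ is not a polynomial, the discrete trace inequality of Lemma~\ref{qa2} does not apply directly; instead I would invoke the standard scaled (continuous) trace inequality $\|w\|_{0,E}^2 \lesssim h_T^{-1}\|w\|_{0,T}^2 + h_T|w|_{1,T}^2$ valid for all $w\in H^1(T)$. Applying it to each component of $\mathbf{u}-\mathrm{I}_h(\mathbf{u})$, dividing by $h_e\sim h_T$, and inserting the two estimates of Lemma~\ref{error} yields
$$
\frac{1}{h_e}\left\|(\mathbf{u}-\mathrm{I}_h(\mathbf{u}))\cdot\mathbf{n}\right\|_{0,E}^2 \lesssim h_T^{-2}\left\|\mathbf{u}-\mathrm{I}_h(\mathbf{u})\right\|_{0,T}^2 + \left|\mathbf{u}-\mathrm{I}_h(\mathbf{u})\right|_{1,T}^2 \lesssim h_T^{2l}|\mathbf{u}|_{\textbf{\textit{H}}^{l+1}(T)}^2.
$$
Summing over the boundary faces then gives $\|\mathbf{u}-\mathrm{I}_h(\mathbf{u})\|_{1,h} \lesssim h^l|\mathbf{u}|_{\textbf{\textit{H}}^{l+1}(\Omega)}$.

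Finally, the pressure term is controlled by the $L^2$ interpolation estimate for the $\mathrm{P}_{k-1}$ space, giving $\|p-\mathrm{I}_h(p)\|_{0,\Omega} \lesssim h^l|p|_{{H}^l(\Omega)}$ for $1\le l\le k$, which is again a consequence of the approximation bounds underlying Lemma~\ref{error} applied to the scalar space. Collecting the velocity and pressure contributions and absorbing all mesh-independent constants into a single $C_{\text{rate}}$ produces the claimed bound. The main obstacle is precisely the treatment of the normal-trace boundary term in the energy norm: one must not reach for the inverse-type trace estimate of Lemma~\ref{qa2}, which is valid only for finite element functions, but rather use the scaled continuous trace inequality so that the factor $h_T^{-1}$ is exactly balanced by the extra power of $h_T$ coming from the $L^2$ interpolation error, leaving the optimal $h^l$ rate intact.
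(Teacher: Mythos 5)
Your proposal is correct and follows essentially the same route as the paper, whose proof of Theorem~\ref{101} consists precisely of invoking the C\'ea estimate of Theorem~\ref{RR1} and the interpolation bounds of Lemma~\ref{error}. Your treatment of the normal-trace term in the energy norm via the scaled continuous trace inequality (rather than the discrete inverse trace estimate of Lemma~\ref{qa2}) correctly fills in the one detail the paper leaves implicit, and the rate bookkeeping ($h_T^{-2}\cdot h_T^{2(l+1)} = h_T^{2l}$, with $l \leq k$ for the pressure) checks out.
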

  		\begin{proof}
  			The conclusion can be easily obtained by directly applying Theorem~\ref{RR1} and Lemma~\ref{error}.
  		\end{proof}
  		\section{Stabilized formulation for high Reynolds numbers}
    \label{sec5}
  The aim of this section is to provide a VMS-LES formulation of the Navier Stokes equations with slip boundary conditions. We validate with numerical tests the use of the Variational Multiscale (VMS) method with Nitsche in solving the Navier-Stokes equations in their standard weak form. In a time interval $(0,T]$ with $T>0$, the model problem reads:
  \begin{equation}\label{uns1}
  \begin{aligned}
  \frac{\partial \mathbf{u}}{\partial t} -\nu \Delta \mathbf{u} +  (\mathbf{u} \cdot \nabla) \mathbf{u}+\nabla p & =\mathbf{f} \quad \text {in}\, \Omega \times (0,T), \\
  \operatorname{div} \mathbf{u} & =0 \quad \text {in}\, \Omega \times (0,T), \\ 
  \mathbf{u} &= 0 \quad \text {on}\, \Gamma_D \times (0,T),\\
  \mathbf{u} \cdot \mathbf{n}&=0 \quad \text{on}\,  \Gamma_{\text{Nav}} \times (0,T), \\
  \nu \mathbf{n}^t D(\mathbf{u}) \boldsymbol{\tau}^k + \beta \mathbf{u} \cdot \boldsymbol{\tau}^k &=0\quad \text {on}\,  \Gamma_{\text{Nav}} \times (0,T), \quad k =1,2,\\
  \mathbf{u}(0) &= 0 \quad \text {in}\, \Omega \times \{0\}, \\
  (p, 1)_{\Omega} & =0.
  \end{aligned}
  \end{equation}
  The weak formulation of problem \eqref{uns1} can be written as 
  for all $ t \in (0,T]$,  Find $\left(\mathbf{u}, p \right) \in \mathrm{V} \times \Pi $ with $\mathbf{u}(0)=0$ such that
  \begin{align}\label{bb1}
  \mathcal{A}\left[\left(\mathbf{u}, p \right) ;\left(\mathbf {v}, q\right)\right]=\mathcal{F}(\mathbf{v})  \quad \forall  \in \left(\mathbf{v}, q \right) \in \mathrm{V} \times \Pi,
  \end{align}
  where
  $$
  \mathcal{A}\left[\left(\mathbf{u}, p \right);\left(\mathbf {v}, q\right)\right] \coloneqq \left( \frac{\partial \mathbf{u}}{\partial t},\mathbf{v}\right) + \frac{\nu}{2}\left({D}(\mathbf{u}), {D}(\mathbf{v})\right) +\left( \mathbf{u} \cdot \nabla \mathbf{u}, \mathbf{v}\right) -\left(p, \nabla \cdot \mathbf{v}\right) -\left(q, \nabla \cdot \mathbf{u}\right)+\int_{\Gamma_{\text{Nav}}} \beta \sum_i\left(\boldsymbol{\tau}^i \cdot \mathbf{v}\right)\left(\boldsymbol{\tau}^i \cdot \mathbf{u}\right) d s,
  $$
  $$
  \mathcal{F}(\mathbf{v}) \coloneqq \langle \mathbf{f},\mathbf{v}\rangle.
  $$
  \subsection{The VMS-LES formulation}
  The VMS technique involves decomposing the solution into coarser and finer scales. As a result, we decompose the weak formulation of the Navier-Stokes equations \eqref{bb1} into two subproblems, considering the coarse scale and the fine scale. The finite element method is used to approximate the coarse scale solution, while the fine scale solution is formulated analytically. Now, we decompose the
  		space into the direct sum of two subspaces:
  		\begin{align}\label{@1}
  		\mathcal{Y}_{0}=\mathcal{V}^h_0 \oplus \mathcal{V}^{\prime}_0
  		\end{align}
  where	$\mathcal{V}_0^h$ known as coarse scale spaces, are the finite element spaces used for the numerical discretization, i.e.
  	$$
  	\begin{aligned}
  	& \mathcal{V}_{0}^h=\mathrm{V}_h \times \Pi_h, 
  	\end{aligned}
  	$$
  	where $\mathcal{V}_0^{\prime}$ are infinite dimensional, known as the fine scale spaces, and orthogonal to $\mathcal{V}_0^h$ respectively. Then, we have the following decompositions from \eqref{@1}:
  		$$
  		\begin{aligned}
  		& \mathbf{u}=\mathbf{u}_h+\mathbf{u}^{\prime}, \\
  		& p=p_h+p^{\prime},
  		\end{aligned}
  		$$
  		where $(\mathbf{u},p) \in \mathcal{Y}_{0}$ and this is the starting point of VMS-LES method i.e.\, the separation of the flow field into resolved scales $(\mathbf{u}_h,p_h)$ and unresolved scales $(\mathbf{u}^{\prime},p^{\prime})$. 
  		Following the approach proposed in \cite{MR3920985}, 
  		Performing the two-scale decomposition on the problem \eqref{bb1}, we  obtain two subproblems, one for the coarse and one for the fine scales:
  		\begin{align}\label{1@}
  		\mathcal{A}\left[\mathbf{V}_h;\mathbf{U}_h+\mathbf{U}^{\prime}\right] = \mathcal{F}(\mathbf{V}_h)
  		\end{align}
  		\begin{align}\label{2@}
  		\mathcal{A}\left[\mathbf{V}^{\prime};\mathbf{U}_h+\mathbf{U}^{\prime}\right] = \mathcal{F}(\mathbf{V}^{\prime})
  		\end{align}
  	where the abbreviations $ \mathbf{U} = (\mathbf{u},p)$ and $\mathbf{V} = (\mathbf{v},q)$ are used for simplicity. It should be noted that the fine scale solution is typically modeled analytically,
expressed in terms of both the problem’s data and the coarse scale solution, and then substituted into the
coarse scale subproblem. By projecting the fine scale solution into the coarse scale solution, a finite dimensional
system for the coarse scale solution is obtained. 
  The solution of  \eqref{2@} is represented as
  	\begin{align}\label{uns3}
  	\mathbf{U}^{\prime}=F_{\mathbf{U}}(\mathbf{R}({\mathbf{U}_h})),
  	\end{align}
   which can be interpreted as the unresolved scales that are derived as a function of the residual of the resolved scales. By substituting \eqref{uns3} into the resolved scales equation \eqref{1@}, a unified set of equations for the resolved scales is obtained.
  	
The objective is to approximate $F_{\mathbf{U}}$ using models that are not dependent on the underlying physics of turbulent flows but are derived solely based on mathematical reasoning. Finally, we adopt a similar approach to \cite{MR1925888} in modeling the fine-scale velocity and pressure variables as:
  	$$
  	\begin{aligned}
  	& \mathbf{u}^{\prime} \simeq-\mathcal{S}_M\left(\mathbf{u}_h\right) \mathbf{r}_M\left(\mathbf{u}_h, p_h\right) \\
  	& p^{\prime} \simeq-\mathcal{S}_C\left(\mathbf{u}_h\right) r_C\left(\mathbf{u}_h\right)
  	\end{aligned}
  	$$
  	where $\mathbf{r}_M\left(\mathbf{u}_h, p_h\right)$ and $r_C\left(\mathbf{u}_h\right)$ indicate the strong residuals of the momentum and continuity equations:
  	$$
  	\begin{aligned}
  	& \mathbf{r}_M\left(\mathbf{u}_h, p_h\right)= \frac{\partial{\mathbf{u}}_h}{\partial t} + \mathbf{u}_h \cdot \nabla \mathbf{u}_h+\nabla p_h-\nu \Delta \mathbf{u}_h-\mathbf{f}, \\
  	& {r}_C\left(\mathbf{u}_h\right)=\nabla \cdot \mathbf{u}_h,
  	\end{aligned}
  	$$
  	respectively. Moreover, $\mathcal{S}_M$ and $\mathcal{S}_C$ are the stabilization parameters designed by a specific Fourier analysis applied in the framework of stabilized methods, which we choose similarly to \cite{MR2361475} as:
  	
  	\begin{align}\label{uns00}
  	\mathcal{S}_M\left(\mathbf{u}_h\right) & = \nonumber \left( \frac{\sigma^2}{\Delta{t}^2}+ \mathbf{u}_h \cdot \boldsymbol{G} \mathbf{u}_h+C_r \nu^2 \boldsymbol{G}: \boldsymbol{G}\right)^{-1 / 2} \\
  	\mathcal{S}_C\left(\mathbf{u}_h\right) & =\left(\mathcal{S}_M \boldsymbol{g} \cdot \boldsymbol{g}\right)^{-1}
  	\end{align}
  	where $\Delta t$ denotes the time step, while $\sigma$ denotes the order of the BDF (Backward Differentiation Formulas) time scheme \cite{MR1702282}. Furthermore, the constant $C_r = 60 \cdot 2^{r-2}$ is calculated using an inverse inequality which depends on the polynomial degree $r$ associated with the velocity finite element space \cite{MR3361036}. Moreover, $\boldsymbol{G}$ and $\boldsymbol{g}$ corresponds to the metric tensor and vector, respectively, and their definitions are defined as
  	$$
  	\begin{aligned}
  	& G_{i j}=\sum_{k=1}^d \frac{\partial \xi_k}{\partial x_i} \frac{\partial \xi_k}{\partial x_j}, \\
  	& g_i=\sum_{k=1}^d \frac{\partial \xi_k}{\partial x_i},
  	\end{aligned}
  	$$
 with $\boldsymbol{x}=\left\{x_i\right\}_{i=1}^d$ represents the coordinates of element $K$ in physical space, $\boldsymbol{\xi}=\left\{\xi_i\right\}_{i=1}^d$  represents the coordinates of element $\hat{K}$ in parametric space, and $\frac{\partial \boldsymbol{\xi}}{\partial \boldsymbol{x}}$ represents the inverse Jacobian of the element mapping between the reference and physical domains. We make the same assumptions as \cite{MR2361475}:
  	\begin{align}\label{uns11}
  	\begin{cases}
  	 &	 \frac{\partial \mathbf{v}_h}{\partial t}=0, \\
     & \mathbf{u}^{\prime}=0 \,\text{on}\, \Gamma,\\
     &  \left(D (\mathbf{v}_h), D( \mathbf{u}^{\prime})\right)=0.
  	\end{cases}
  	\end{align} 
 By explicitly expressing the left-hand side of \eqref{1@} and adding the Nitsche terms to the action of the Laplace distribution, we arrive at the following result:
  		$$
  		\begin{aligned}
  		\mathcal{A}\left[\mathbf{V}_h;\mathbf{U}_h+\mathbf{U}^{\prime}\right]=& ( \mathbf{u}_t,\mathbf{v}_h)+ \frac{\nu}{2}(D(\mathbf{u}_h), D(\mathbf{v}_h))+((\mathbf{u}_h+\mathbf{u^{\prime}}) \cdot \nabla (\mathbf{u}_h +\mathbf{u^{\prime}}), \mathbf{v}_h)-(p_h +p^{\prime}, \nabla \cdot \mathbf{v}_h)  \\& -(q_h, \nabla \cdot (\mathbf{u}_h + \mathbf{u^{\prime}})) + \int_{\Gamma_{\text{Nav}}} p^{\prime} (\mathbf{n} \cdot \mathbf{v}_h)  d s + \int_{\Gamma_{\text{Nav}}} q (\mathbf{n} \cdot \mathbf{u}^{\prime}) d s\\&+
  		\sum_{E\in \mathcal{E}_{\text{Nav}}}\bigg(-\int_{E} \mathbf{n}^t(\nu D(\mathbf{u}_h)-p_h I) \mathbf{n}(\mathbf{n} \cdot \mathbf{v}_h) d s-\int_{E} \mathbf{n}^t(\nu {D}(\mathbf{v}_h)-q_h I) \mathbf{n}(\mathbf{n} \cdot \mathbf{u}_h) d s \\& +\int_{E} \beta \sum_i\left(\boldsymbol{\tau}^i \cdot \mathbf{v}_h\right)\left(\boldsymbol{\tau}^i \cdot \mathbf{u}_h\right) d s 
  		+\gamma \int_{E} {h_e}^{-1}(\mathbf{u}_h \cdot \mathbf{n})(\mathbf{v}_h \cdot \mathbf{n}) d s \bigg), \\
  		=& ( \mathbf{u}_t,\mathbf{v}_h) +   \frac{\nu}{2}(D(\mathbf{u}_h), D(\mathbf{v}_h)) + ( \mathbf{u}_h \cdot \nabla \mathbf{u}_h, \mathbf{v}_h) - (p_h , \nabla \cdot \mathbf{v}_h) - (q_h, \nabla \cdot \mathbf{u}_h) \\& +
  			\sum_{E\in \mathcal{E}_{\text{Nav}}}\bigg(-\int_{E} \mathbf{n}^t(\nu D(\mathbf{u}_h)-p_h I) \mathbf{n}(\mathbf{n} \cdot \mathbf{v}_h) d s-\int_{E} \mathbf{n}^t(\nu {D}(\mathbf{v}_h)-q_h I) \mathbf{n}(\mathbf{n} \cdot \mathbf{u}_h) d s \\& +\int_{E} \beta \sum_i\left(\boldsymbol{\tau}^i \cdot \mathbf{v}_h\right)\left(\boldsymbol{\tau}^i \cdot \mathbf{u}_h\right) d s 
  			+\gamma \int_{E} {h_e}^{-1}(\mathbf{u}_h \cdot \mathbf{n})(\mathbf{v}_h \cdot \mathbf{n}) d s \bigg)  -(p^{\prime}, \nabla \cdot \mathbf{v}_h) \\& - (q_h, \nabla \cdot  \mathbf{u^{\prime}})  + \int_{\Gamma_{\text{Nav}}} p^{\prime} (\mathbf{n} \cdot \mathbf{v}_h)  d s + \int_{\Gamma_{\text{Nav}}} q (\mathbf{n} \cdot \mathbf{u}^{\prime}) d s  +( \mathbf{u}_h \cdot \nabla \mathbf{u^{\prime}}, \mathbf{v}_h) \\&+ ( \mathbf{u^{\prime}} \cdot \nabla \mathbf{u}_h , \mathbf{v}_h) + ( \mathbf{u^{\prime}} \cdot \nabla \mathbf{u^{\prime}}, \mathbf{v}_h).
  			\end{aligned}
  			$$	
Thereafter, we apply integration by parts to the fine-scale terms that appear in the coarse-scale equations, by considering the aforementioned assumption \eqref{uns11}. This results into the semi-discrete VMS-LES formulation of the Navier-Stokes equation with Nitsche which is expressed in terms of the weak residual as follows:
	for all $t \in (0,T]$, Find $\mathbf{U}_h=\left\{\mathbf{u}_h, p_h\right\} \in \mathcal{V}_0^h$ with $\mathbf{u}_h(0) = 0$ such that 
	\begin{align}\label{@10}
    \mathbf{H}\left[\mathbf{V}_h;\mathbf{U}_h\right]=\mathbf{L}\left(\mathbf{V}_h\right)
	\end{align}
for all $\mathbf{V}_h=\left\{\mathbf{v}_h, q_h\right\} \in \mathcal{V}_0^h$, where we considered the following definitions:
	$$
	\begin{aligned}
	& \mathbf{H}\left[\mathbf{V}_{{h}}; \mathbf{U}_{{h}}\right] \coloneqq {\mathcal{G}}^{N S}\left(\mathbf{V}_{{h}}, \mathbf{U}_{{h}}\right) + {\mathcal{G}}^{\text {SUPG }}\left(\mathbf{V}_{{h}}, \mathbf{U}_{{h}}\right)+{\mathcal{G}}^{\text {VMS }}\left(\mathbf{V}_{{h}}, \mathbf{U}_{{h}}\right)+ {\mathcal{G}}^{\text {LES }}\left(\mathbf{V}_{{h}}, \mathbf{U}_{{h}}\right), \\
	& \mathbf{L}\left(\mathbf{V}_{{h}}\right) \coloneqq \langle \mathbf{v}_h, \boldsymbol{f}\rangle,
	\end{aligned}
	$$
	with
\begin{align}\label{@3}
	{\mathcal{G}}^{NS}\left(\mathbf{V}_h, \mathbf{U}_h\right)= & \nonumber 
	\sum_{T\in \mathcal{T}_{h}}\bigg( \left(\mathbf{u}_t,\mathbf{v}_h\right)+ \frac{\nu}{2}\left(D(\mathbf{v}_h), D(\mathbf{u}_h)\right)+\left(  \mathbf{v}_h,\mathbf{u}_h \cdot \nabla \mathbf{u}_h\right)-\left(\nabla \cdot \mathbf{v}_h,p_h\right)-\left(q_h, \nabla \cdot \mathbf{u}_h\right)\bigg) +\\& \nonumber
	\sum_{E\in \mathcal{E}_{\text{Nav}}}\bigg(-\int_{E} \mathbf{n}^t\left(\nu D(\mathbf{u}_h)-p_h I\right) \mathbf{n}\left(\mathbf{n} \cdot \mathbf{v}_h\right) d s-\int_{E} \mathbf{n}^t\left(\nu {D}(\mathbf{v}_h)-q_h I \right) \mathbf{n} \left(\mathbf{n} \cdot \mathbf{u}_h \right) d s \\&  +\int_{E} \beta \sum_i\left(\boldsymbol{\tau}^i \cdot \mathbf{v}_h\right)\left(\boldsymbol{\tau}^i \cdot \mathbf{u}_h\right) d s 
	+\gamma \int_{E} {h_e}^{-1}\left(\mathbf{u}_h \cdot \mathbf{n}\right) \left(\mathbf{v}_h \cdot \mathbf{n}\right) d s \bigg) \\
	\mathcal{G}^{\text {SUPG}}\left(\mathbf{V}_h, \mathbf{U}_h\right)= & \sum_{T\in \mathcal{T}_{h}}\bigg( \left( \mathbf{u}_h \cdot \nabla \mathbf{v}_h-\tilde{C}\nabla q_h, \mathcal{S}_M\left(\mathbf{u}_h\right) \mathbf{r}_M\left(\mathbf{u}_h, p_h\right)\right)
	-\left(\nabla \cdot \mathbf{v}_h, \mathcal{S}_C\left(\mathbf{u}_h\right) \mathbf{r}_C\left(\mathbf{u}_h\right)\right)  \bigg) \label{@4}\\
  		\mathcal{G}^{\text {VMS}}\left(\mathbf{V}_h, \mathbf{U}_h\right)= & \nonumber
  		   \sum_{T\in \mathcal{T}_{h}}\bigg(\left( \mathbf{u}_h \cdot (\nabla \mathbf{v}_h)^T, \mathcal{S}_M\left(\mathbf{u}_h\right) \mathbf{r}_M\left(\mathbf{u}_h, p_h\right)\right) - 
  		   \sum_{E\in \mathcal{E}_{\text{Nav}}}\bigg(\int_{E} \mathcal{S}_C\left(\mathbf{u}_h \right) \mathbf{r}_C\left(\mathbf{u}_h\right) \left(\mathbf{n} \cdot \mathbf{v}_h\right) d s \\& + \int_{E} q_h \left(\mathbf{n} \cdot  \mathcal{S}_M\left(\mathbf{u}_h\right) \mathbf{r}_M\left(\mathbf{u}_h, {p}_h\right)\right) d s\bigg) \label{@5}\\
  		\mathcal{G}^{\text {LES}}\left(\mathbf{V}_h, \mathbf{U}_h\right)= & -\sum_{T\in \mathcal{T}_{h}} \bigg(\left(  \nabla \mathbf{v}_h, \mathcal{S}_M\left(\mathbf{u}_h\right) \mathbf{r}_M \left(\mathbf{u}_h, {p}_h \right) \otimes \mathcal{S}_M\left(\mathbf{u}_h\right) \mathbf{r}_M\left(\mathbf{u}_h, {p}_h\right)\right) \bigg). \label{@6}
  		\end{align}
  	In our formulation, we introduce an additional constant $\tilde{C}$ in \eqref{@4}  to discuss two choices for finite elements. Specifically, we set $\tilde{C}=0$ when we make use of $\mathbb P_2-\mathbb P_1$ inf-sup stable finite elements. Conversely, when we make use of stabilized equal order finite elements for both the velocity and pressure variables, we set $\tilde{C}=1$, as stated in \cite{MR3920985}.
  We would like to highligt that \eqref{@3} represents the weak formulation of the Navier-Stokes equation with Nitsche. We conclude that \eqref{@4} represents the classical Streamline Upwind Petrov Galerkin (SUPG) stabilization terms and \eqref{@5} represents additional terms introduced by VMS (Variational Multiscale) method. Finally, \eqref{@6} represents the LES (Large Eddy Simulation) modeling of turbulence.
  \begin{remark}
 This formulation differs from the standard one because of the contribution of pressure terms on the boundary $\Gamma_{\text{Nav}}$.
  \end{remark}
  \begin{remark}\label{uns01} 
  It is observed that as the time step $\Delta t$ approaches zero, the stabilization parameters in  \eqref{uns00} behave as follows:
  $$
  \mathcal{S}_M \sim \Delta t \rightarrow 0 \quad \mathcal{S}_C \sim \frac{1}{\Delta t} \rightarrow \infty.
  $$
A similar behavior is also demonstrated in \cite{perdoncin2020numerical} i.e., the VMS-LES modelling may lose its effectiveness for small time steps. It is observed that as $\Delta t \rightarrow 0$ , the term associated with the LES modeling of turbulence 
$\left(\nabla \mathbf{v}_h, \mathcal{S}_M\left(\mathbf{u}_h\right) \mathbf{r}_M \left(\mathbf{u}_h, {p}_h \right) \otimes \mathcal{S}_M\left(\mathbf{u}_h\right) \mathbf{r}_M\left(\mathbf{u}_h, {p}_h\right)\right)$ becomes negligible, while the dominant term \\$\left(\nabla \cdot \mathbf{v}_h, \mathcal{S}_C\left(\mathbf{u}_h\right)  
\mathbf{r}_C\left(\mathbf{u}_h\right)\right)$ fails to effectively act as a turbulence model in the semi-discrete VMS-LES weak formulation of the Navier-Stokes equations \eqref{@10}. 
  \end{remark}
\subsection{Fully discrete VMS-LES formulation}\label{pp}
  We obtain a fully discrete VMS-LES weak formulation of the Navier-Stokes equations with Nitsche by discretizing time with the BDF scheme of order $\sigma$, and the nonlinear terms in the above formulation are handled using Newton-Gregory backward polynomials \cite{cellier2006continuous}. A detailed explanation of the fully discrete VMS-LES method is provided in \cite{perdoncin2020numerical}.\\ \\
Find $ \mathbf{U}_{{h}} =\left\{\mathbf{u}^{n+1}_h, p^{n+1}_h\right\} \in \mathcal{V}_{0}^h$ :
   \begin{align}\label{full}
   {\tilde{\mathcal{G}}}^{N S}\left(\mathbf{V}_{{h}}, \mathbf{U}_{{h}}\right) + {\mathcal{\tilde{G}}}^{\text {SUPG }}\left(\mathbf{V}_{{h}}, \mathbf{U}_{{h}}\right)+{\mathcal{\tilde{G}}}^{\text {VMS }}\left(\mathbf{V}_{{h}}, \mathbf{U}_{{h}}\right)+ {\mathcal{\tilde{G}}}^{\text {LES }}\left(\mathbf{V}_{{h}}, \mathbf{U}_{{h}}\right) = \langle\mathbf{v}_h, \mathbf{f}^{n+1}\rangle,
   \end{align}
for all $ \mathbf{V}_{{h}} = \left\{\mathbf{v}_h, q_h\right\} \in \mathcal{V}_0^h$  with $\mathbf{f}^{n+1}=\mathbf{f}(t^{n+1})$.\\ \\
The bilinear forms $\tilde{\mathcal{G}}^{N S}\left(\mathbf{V}_{{h}}, \mathbf{U}_{{h}}\right), {\mathcal{\tilde{G}}}^{\text {SUPG }}\left(\mathbf{V}_{{h}}, \mathbf{U}_{{h}}\right),{\mathcal{\tilde{G}}}^{\text {VMS }}\left(\mathbf{V}_{{h}}, \mathbf{U}_{{h}}\right), \text{ and } {\mathcal{\tilde{G}}}^{\text {LES }}\left(\mathbf{V}_{{h}}, \mathbf{U}_{{h}}\right)$ are defined in Appendix~\ref{AA}. These forms are modifications of the bilinear forms defined in \eqref{@10}.
  	\section{Numerical Experiments}
   \label{sec6}
  	Now, computational examples are presented to demonstrate the consistency of the numerical scheme. The open-source finite element library FEniCS \cite{alnaes2015fenics} is utilized to simulate all numerical computations. The theoretical results of Theorem~\ref{101} is numerically validated in the first example. The Nitsche method is validated in the second example by comparing  with the benchmark problem found in the literature \cite{MR4307023}. The VMS-LES approach with Nitsche at high Reynolds numbers is validated in the last two examples. A Lagrange multiplier is used to implement the average zero condition for the pressure approximation. 
 	\subsection{Test 1: Convergence rates}
 	In this numerical test, we compute the convergence rate of the Nitsche method 
 \eqref{101}, considering the square domain  $\Omega=(-1,1)^2$, and a sequence of uniformly refined meshes. We present numerical test based on the following exact solution 
  		$$
  		\begin{gathered}
  		\mathbf{u}\left(x_1, x_2\right)=\left(2x_2(1-x_1^2),-2x_1(1-x_2^2)\right)^T 
  		\end{gathered}
  		$$
  		$$
  		\begin{gathered}
  		p \left(x_1,x_2 \right)= \left(2x_1-1 \right) \left(2x_2-1 \right).
  		\end{gathered}
  		$$
  		The slip boundary condition is imposed on $x_2=-1$ and the essential boundary  condition is enforced on the rest of the boundary.
      We can observe that Table~\ref{Nits} present the approximation errors for pressure and velocity as well as the convergence rate, which are in good agreement with the theory. Table~\ref{Nits1} presents the error in $L_2$ norm on the slip condition on $\Gamma_{\text{Nav}}$ and it shows that the larger the Nitsche parameter $\gamma$ is, the smaller error on the slip  condition. This happen because the error of the actual equation increases, so there is a compromise between both things. We can observe this behaviour in Table~\ref{Nits}. Additionally, we see that the number of Newton iterations required to reach the prescribed tolerance of $10^{-7}$ is at most three. Figure~\ref{con_test} represents the computed velocity field. The results in Table~\ref{Nits} and Figure~\ref{con_test} were computed with $\beta = 10$, $\nu = 1$, and $\gamma = 10$.

\begin{table}
  			\caption{Test 1: Experimental errors, iteration count, Number of degree of freedom (D.O.F.), and convergence rates for the approximate solutions $\mathbf{u}_h$ and $p_h$. Values are displayed for the Taylor-Hood space $\mathbb P_2-\mathbb P_1$ with $\beta = 10$ and $\nu = 1$.}
  			\label{Nits}
  			\begin{tabular}{|r|r|r|c|c|c|c|c|c|c|c|}
  				\toprule
  $\gamma$&	 Mesh & D.O.F. & Newton Its & $\|p-p_h\|_{0}$ & rate & $\|\nabla(\mathbf{u}-\mathbf{u}_h)\|$ & rate & $ \|\mathbf{u} - \mathbf{u}_h \|_{0} $ & rate \\
  				\midrule
   & $8 \times 8$ & $659$ & $3$ & $ 5.20 \times 10^{-2}$ & $-$ & $ 1.12 \times 10^{-1}$ & $-$& $ 4.90 \times 10^{-3}$ & - \\
  &	$16 \times 16$ &$2467$ & $2$ & $1.27 \times 10^{-2}$ & $2.03$ & $2.23 \times 10^{-2}$ & $2.32$ &$ 4.90 \times 10^{-4}$ & $3.34$\\
  		1.0	&	$32 \times 32$ & $9539$ & $2$ & $ 3.13 \times 10^{-3}$ & $2.02$ & $ 4.55 \times 10^{-3}$ & $2.29$ & $ 5.00 \times 10^{-5}$ &$3.30$\\
  			&	$64 \times 64$ & $37507$ & $2$ & $7.78 \times 10^{-4}$ & $2.01$ & $ 1.23 \times 10^{-3}$ & $1.88$ & $7.00\times 10^{-6}$ &$2.92$\\
  			&    $128 \times 128$ & $148739$ & $2$ & $1.94 \times 10^{-4}$ &  $2.00$& $2.59 \times 10^{-4}$ & $2.25$  & $ 1.00 \times 10^{-6}$ & $3.23$\\
  				\bottomrule
       & $8 \times 8$ & $659$ & $3$ & $ 5.18 \times 10^{-2}$ & $-$ & $ 8.33 \times 10^{-2}$ & $-$& $ 3.47 \times 10^{-3}$ & - \\
  			&	$16 \times 16$ &$2467$ & $2$ & $1.27 \times 10^{-2}$ & $2.02$ & $1.81 \times 10^{-2}$ & $2.19$ &$ 3.82 \times 10^{-4}$ & $3.18$\\
  	10		&	$32 \times 32$ & $9539$ & $2$ & $ 3.14 \times 10^{-3}$ & $2.02$ & $ 4.24 \times 10^{-3}$ & $2.10$ & $ 4.50 \times 10^{-5}$ &$3.09$\\
  			&	$64 \times 64$ & $37507$ & $2$ & $7.78 \times 10^{-4}$ & $2.01$ & $ 1.03 \times 10^{-3}$ & $2.04$ & $5.00 \times 10^{-6}$ &$3.04$\\
  			&    $128 \times 128$ & $148739$ & $2$ & $1.94 \times 10^{-4}$ &  $2.00$& $2.53 \times 10^{-4}$ & $2.02$  & $ 1.00 \times 10^{-6}$ & $3.01$\\
      \bottomrule
       & $8 \times 8$ & $659$ & $3$ & $ 5.15 \times 10^{-2}$ & $-$ & $ 6.32 \times 10^{-2}$ & $-$& $ 2.74 \times 10^{-3}$ & - \\
  			&	$16 \times 16$ &$2467$ & $2$ & $1.27 \times 10^{-2}$ & $2.02$ & $1.59 \times 10^{-2}$ & $1.98$ &$ 3.42 \times 10^{-4}$ & $3.00$\\
  		100	&	$32 \times 32$ & $9539$ & $2$ & $ 3.13 \times 10^{-3}$ & $2.02$ & $ 3.90 \times 10^{-3}$ & $1.99$ & $ 4.30 \times 10^{-5}$ &$3.00$\\
  			&	$64 \times 64$ & $37507$ & $2$ & $7.78 \times 10^{-4}$ & $2.01$ & $ 1.00 \times 10^{-3}$ & $1.99$ & $5.00 \times 10^{-6}$ &$2.99$\\
  			&    $128 \times 128$ & $148739$ & $2$ & $1.94 \times 10^{-4}$ &  $2.00$& $2.50\times 10^{-4}$ & $1.99$  & $ 1.00 \times 10^{-6}$ & $2.99$\\
     \bottomrule
  			\end{tabular}
  		\end{table}

  		\begin{table}
  			\centering
  			\caption{Test 1: Computation of $\|\mathbf{u}_h \cdot \mathbf{n}\|_{0,\Gamma_{\text{Nav}}}$ for different values of $\gamma$}
  			\label{Nits1}
  			\begin{tabular}{|r|r|c|c|c|c|c|}
  				\toprule
  				Mesh & $\gamma = 0.01$ & $\gamma = 0.1$ & $\gamma = 1$ & $\gamma = 10$ & $\gamma = 100$    \\
  				\midrule
  $8 \times 8$ & $ 1.09 \times 10^{-2}$ & $ 1.10 \times 10^{-2}$ & $ 1.87 \times 10^{-2}$ & $ 1.31\times 10^{-2}$ & $ 5.77 \times 10^{-4}$  \\
  $16 \times 16$ &$ 1.32 \times 10^{-3}$ & $ 1.37 \times 10^{-3}$ & $ 2.23 \times 10^{-3}$ & $1.46 \times 10^{-3}$ &$ 6.40 \times 10^{-5}$\\
  $32 \times 32$ & $2.16 \times 10^{-4}$ & $7.35 \times 10^{-4}$ & $ 2.24 \times 10^{-4}$ &  $ 1.60 \times 10^{-4}$ & $7.00 \times 10^{-6}$ \\
  $64 \times 64$ & $2.20 \times 10^{-5}$ & $ 1.90 \times 10^{-5}$ &  $ 4.90 \times 10^{-5}$ & $ 1.80 \times 10^{-5}$ & $ 1.00 \times 10^{-6}$ \\
  $128 \times 128$ & $3.00 \times 10^{-6}$ & $ 3.00 \times 10^{-6}$ &  $ 2.00 \times 10^{-6}$ & $ 2.00 \times 10^{-6}$  & $1.00 \times 10^{-7}$ \\
  				\bottomrule
  			\end{tabular}
  		\end{table}
  		\begin{figure}[htbp] 
  			\centering 
  			\includegraphics[width=0.6\textwidth]{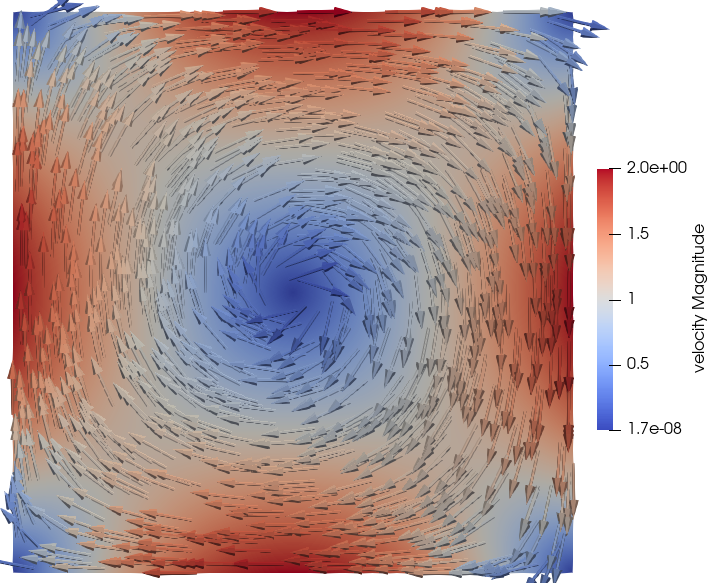}
  			\caption{Test 1: velocity field} 
  			\label{con_test} 
  		\end{figure}
  		
\subsection{Test 2: Lid-driven cavity test}
  This is a lid-driven cavity test and we perform it for steady and unsteady formulations. Consider the stationary Navier-Stokes equations with slip boundary conditions. The evaluation includes modelling a planar flow of an isothermal fluid inside a cavity driven. The cavity is represented as a square domain $\Omega=(0,1) \times(0,1)$, with negligible body force, and one moving wall. The velocity imposed on the top boundary $\{x_2=1\}$ is defined as
  		$$
  		\mathbf{u}= \begin{cases}(10 x_1, 0)^{\mathrm{T}} & \text {for}\, 0.0 \leqslant x_1 \leqslant 0.1 \\ (1,0)^{\mathrm{T}} & \text {for}\, 0.1 \leqslant x_1 \leqslant 0.9 \\ (10-10 x_1, 0)^{\mathrm{T}} & \text {for}\, 0.9 \leqslant x_1 \leqslant 1\end{cases}
  		$$
  		and the homogeneous slip boundary with $\beta = 1$ and $\gamma = 10$ is enforced on the other three sides. 
  	In Figure~\ref{v1-v2}, we observe the velocity streamlines for $Re = 1$ and $Re = 500$, confirming the expected behavior and aligning with the results reported in \cite{MR2683650}.\\
   
  Secondly, we consider the  unsteady Navier Stokes equations with the slip boundary condition and we apply the VMS-LES approach with Nitsche for validating the scheme at high Reynolds numbers. Figure~\ref{5000-15000-30000-50000} shows the velocity streamlines plots at Reynolds number $Re = 1000$ and $Re = 5000$ at final time $T = 35$ with time step $\Delta t = 0.035 s$. As the Reynolds number increases, the primary vortex migrates towards the center of the cavity or becomes more dense and the flow of the fluid is unpredictable in this region.

\begin{figure}
	\centering
 \begin{subfigure}[b]{0.4\textwidth}
		\centering
		\includegraphics[width=\textwidth]{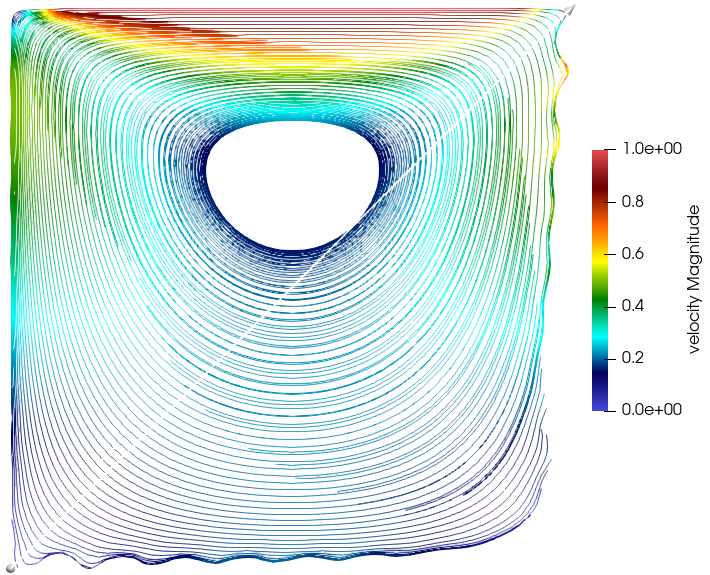}
        \caption{Re = 1}
	\end{subfigure}
	\hfill
	\begin{subfigure}[b]{0.4\textwidth}
		\centering
		\includegraphics[width=\textwidth]{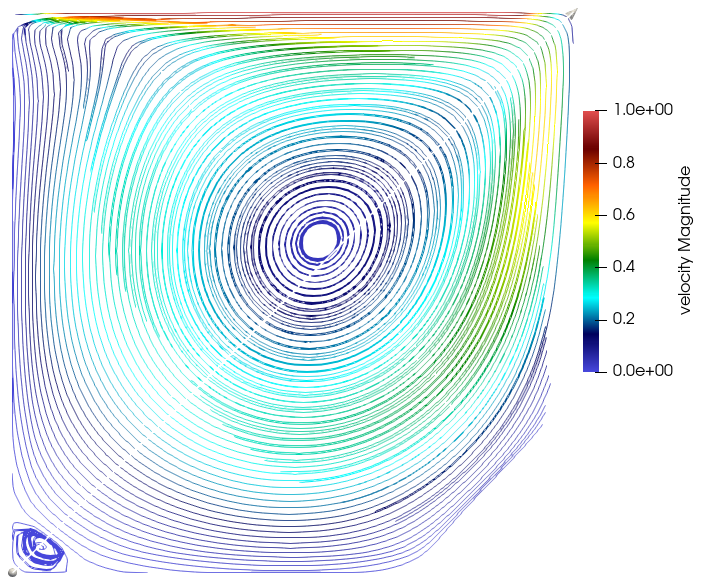}
  \caption{Re = 500}
	\end{subfigure}
      \caption{Test 2: Velocity streamlines $for Re=1$ and $Re=500 $ with mesh = 32 $\times$ 32}
	\label{v1-v2}
\end{figure}
\begin{figure}
	\centering
	\begin{subfigure}[b]{0.4\textwidth}
		\centering
		\includegraphics[width=\textwidth]{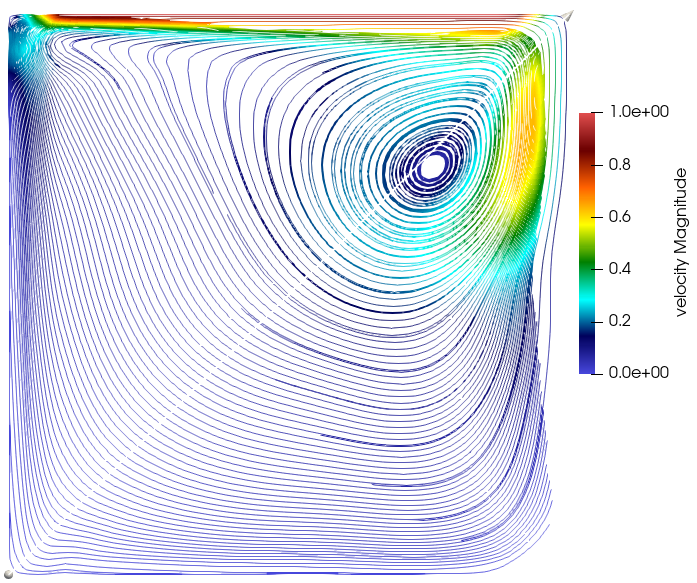}
		\caption{Re = 1000}
	\end{subfigure}
	\hfill
	\begin{subfigure}[b]{0.4\textwidth}
		\centering
		\includegraphics[width=\textwidth]{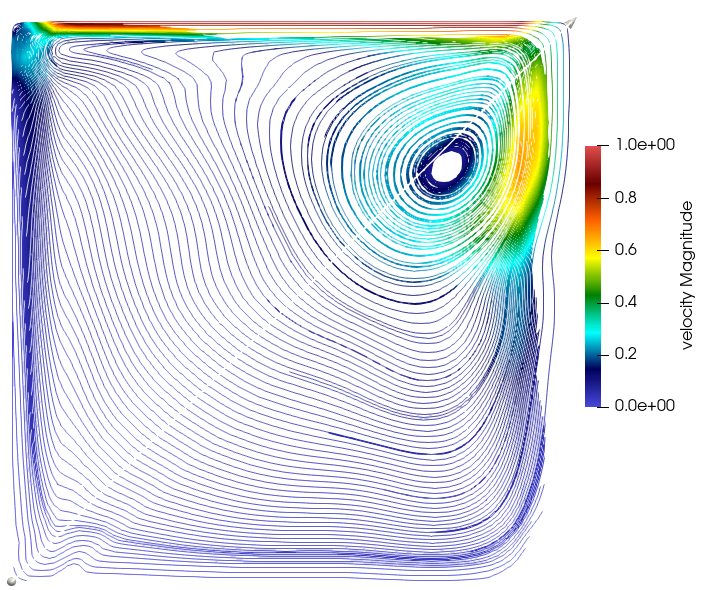}
		\caption{Re = 5000}
	\end{subfigure}
	\caption{Test 2: Velocity streamlines for $Re=1000$ and $Re=5000$  with mesh = 32 $\times$ 32 at $T = 35$ with $\Delta t=0.035s$}
	\label{5000-15000-30000-50000}
\end{figure}
\subsection{Test 3: Flow past through a circular cylinder}
This example is based on a standard three dimensional CFD benchmark problem: flow past through a circular cylinder. The geometrical settings of the domain is taken from \cite{araya:hal-04077986}. The computational mesh is depicted in Figure~\ref{cylinder}. 
\begin{figure}[htbp]
	\centering
	\includegraphics[width=0.8\textwidth]{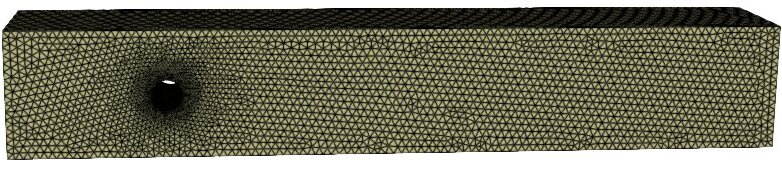}
	\caption {Test 3: Surface view of the computational mesh}
	\label{cylinder}
\end{figure}	
In this problem, no slip boundary condition are imposed on all the lateral walls of the box, while do-nothing boundary conditions is imposed at the outflow plane. On the surface of the cylinder we impose the homogeneous slip condition with $\beta = 1$ and $\gamma = 10$. Finally, the inflow condition is given by
$$
\mathbf{u}_D:=\left(\frac{16 U_m \sin(\pi t /8) x_2 x_3(H-x_2)(H-x_3)}{H^4}, 0,0\right)^T
$$
with $U_m:=2.25 \mathrm{~m} / \mathrm{s}$ and $H=0.41 \mathrm{~m}$. The Reynolds number is given by the formula $Re =\frac{UD }{\nu}$ where $U$ represents the average velocity of the fluid imposed on the inflow boundary, $D$ corresponds to the diameter of the cylinder. The mesh is depicted in Figure~\ref{cylinder}. Our objective is to represent the behavior of fluid velocity at high Reynolds numbers. The numerical solution of the streamlines of the fluid at different Reynolds numbers, including $1000$; $10\,000$; and $50\,000$ are observed at time $T=1$ with time step $\Delta t = 0.1 s$ in Figures~\ref{ap1},~\ref{ap2} and~\ref{ap3}. The isovalues of the pressure is presented at Reynold number $50,000$ in Figure~\ref{ap4}. The solution exhibits oscillations at larger time intervals, specifically, the tail of the flow after the obstacle develops typical oscillations.
\begin{figure}[htbp]
	\centering
	\includegraphics[width=0.8\textwidth]{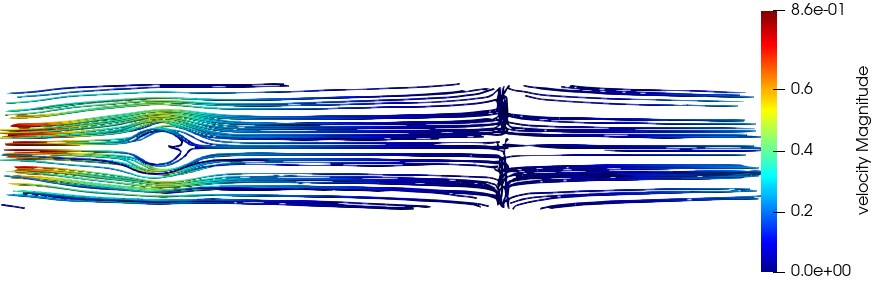}
	\caption {Test 3: Fluid velocity streamlines tubes at Re = 1000 at $T =1$ with $\Delta t = 0.1$}
	\label{ap1}
\end{figure}
\begin{figure}[htbp]
	\centering
	\includegraphics[width=0.8\textwidth]{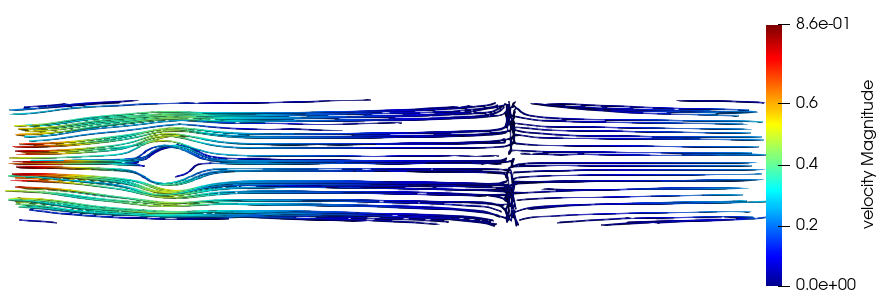}
	\caption {Test 3: Fluid velocity streamlines tubes at Re = 10\,000 at $T =1$ with $\Delta t = 0.1$}
	\label{ap2}
\end{figure}
\begin{figure}[htbp]
	\centering
	\includegraphics[width=0.8\textwidth]{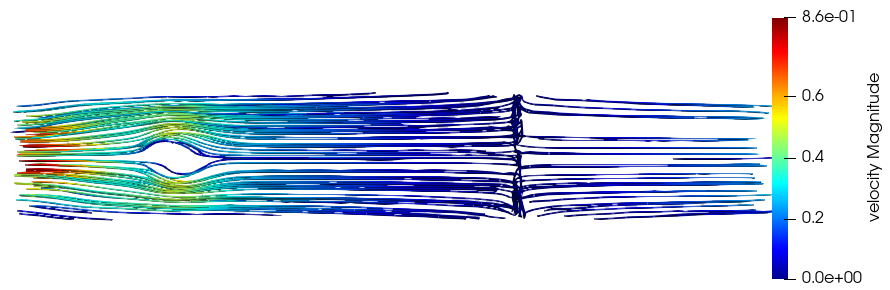}
	\caption {Test 3: Fluid velocity streamlines tubes at Re = 50\,000 at $T =1$ with $\Delta t = 0.1$}
	\label{ap3}
\end{figure}
\begin{figure}[htbp]
	\centering
	\includegraphics[width=0.8\textwidth]{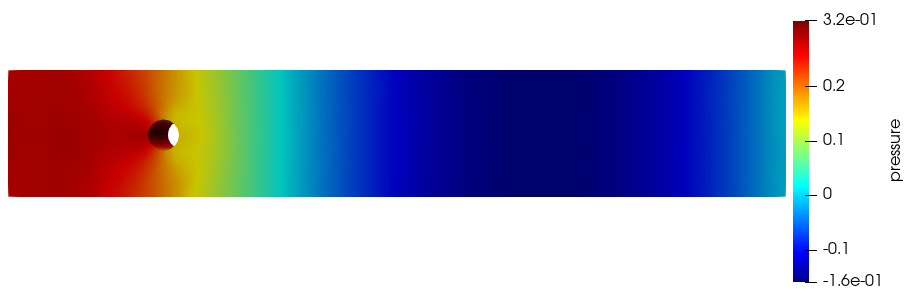}
	\caption {Test 3: Isovalues of the pressure at Re = 50\,000 at $T =1$ with $\Delta t = 0.1$}
	\label{ap4}
\end{figure}
\section{Conclusion}
In this paper, we address two main contributions. Firstly, we analyze Nitsche's method for the stationary Navier-Stokes equations on Lipschitz domains under minimal regularity assumptions. Our analysis provides a robust formulation for implementing slip (i.e., Navier) boundary conditions in arbitrarily complex boundaries. We establish the well-posedness of the discrete problem using the Banach Ne\v{c}as Babu\v{s}ka and the Banach fixed-point theorems under standard small data assumptions. Additionally, we provide optimal convergence rates for the approximation error. Secondly, we propose a Variational Multiscale Large Eddy Simulation (VMS-LES) stabilized formulation, which enables the simulation of incompressible fluids at high Reynolds numbers.
Finally, we perform three numerical tests: the first one validates the theoretical results of the Nitsche's scheme, the second one is a benchmark problem that demonstrates the consistency of our scheme for both steady and unsteady formulations at arbitrary Reynolds numbers, and the third test shows the behavior of the fluid passing through a cylinder at high Reynolds numbers.

 \section*{Acknowledgments}
AB was supported by the Ministry of Education, Government
of India - MHRD for financial assistance. NAB was supported by the ANID Grant \emph{FONDECYT de Postdoctorado N° 3230326}. 
\section*{Data Availability}
Enquiries about data availability should be directed to the authors.
\section*{Declarations}
\textbf{Conflict of interest} The authors have not disclosed any competing interests.

\bibliography{Navier-Stokes}  
\bibliographystyle{alpha}
\appendix
\section{Fully discrete VMS-LES formulation}\label{AA}
The bilinear forms appearing in the fully discrete VMS-LES formulation  (cf. Section~\ref{pp}) are defined below:
\begin{align*}
{\tilde{\mathcal{G}}}^{N S}\left(\mathbf{V}_{{h}}, \mathbf{U}_{{h}}\right) \coloneqq& \sum_{T\in \mathcal{T}_{h}}\bigg(\left(\mathbf{v}_h,  \frac{\alpha_\sigma \mathbf{u}^{n+1}_h-\mathbf{u}^{n, \mathrm{BDF}\sigma}_h}{\Delta t}\right) +\left( \mathbf{v}_h, \mathbf{u}^{n+1, \mathrm{EXT}}_h  \cdot \nabla \mathbf{u}^{n+1}_h\right)  + \frac{\nu}{2}\left(D (\mathbf{v}_h), D(\mathbf{u}_h^{n+1})\right) \\&-\left(\nabla \cdot \mathbf{v}_h, p^{n+1}_h\right)- \left(q_h, \nabla \cdot \mathbf{u}^{n+1}_h\right) \bigg) +
  	\sum_{E\in \mathcal{E}_{h}^b}\bigg(-\int_{E} \mathbf{n}^t(\nu D(\mathbf{u}_h^{n+1})-p_h^{n+1} I) \mathbf{n}(\mathbf{n} \cdot \mathbf{v}_h) d s \\& -\int_{E} \mathbf{n}^t(\nu {D}(\mathbf{v}_h)-q_h I) \mathbf{n}(\mathbf{n} \cdot \mathbf{u}_h^{n+1}) d s  +\int_{E} \beta \sum_i\left(\boldsymbol{\tau}^i \cdot \mathbf{v}_h\right)\left(\boldsymbol{\tau}^i \cdot \mathbf{u}_h^{n+1}\right) d s 
  	+\gamma \int_{E} {h_e}^{-1}(\mathbf{u}_h^{n+1} \cdot \mathbf{n})(\mathbf{v}_h \cdot \mathbf{n}) d s \bigg)
\end{align*}
\begin{align*}
 {\mathcal{\tilde{G}}}^{\text {SUPG }}\left(\mathbf{V}_{{h}}, \mathbf{U}_{{h}}\right) \coloneqq & \sum_{T\in \mathcal{T}_{h}}\bigg(\left(  \mathbf{u}^{n+1, \mathrm{EXT}}_h \cdot \nabla \mathbf{v}_h- \tilde{C}\nabla q_h, \mathcal{S}_M(\mathbf{u}^{n+1, \mathrm{EXT}}_h) \mathbf{r}_M(\mathbf{u}^{n+1}_h, p^{n+1}_h)\right) \\ & + \left(\nabla \cdot \mathbf{v}_h, \mathcal{S}_C\left(\mathbf{u}^{n+1, \mathrm{EXT}}_h\right) r_C(\mathbf{u}^{n+1}_h)\right) \\
{\mathcal{\tilde{G}}}^{\text {VMS }}\left(\mathbf{V}_{{h}}, \mathbf{U}_{{h}}\right) \coloneqq &	\sum_{T\in \mathcal{T}_{h}} \left( \mathbf{u}^{n+1, \mathrm{EXT}}_h \cdot\left(\nabla \mathbf{v}_h\right)^T, \mathcal{S}_M(\mathbf{u}^{n+1, \mathrm{EXT}}_h) \mathbf{r}_M(\mathbf{u}^{n+1}_h, p^{n+1}_h)\right)\\
-&\sum_{E\in \mathcal{E}_{h}^b}\bigg(\int_{E} \mathcal{S}_C(\mathbf{u}_h^{n+1, \mathrm{EXT}}) \mathbf{r}_C(\mathbf{u}_h^{n+1}) (\mathbf{n} \cdot \mathbf{v}_h) d s+ \int_{E} q_h (\mathbf{n} \cdot  \mathcal{S}_M\left(\mathbf{u}_h^{n+1, \mathrm{EXT}}\right) \mathbf{r}_M\left(\mathbf{u}_h^{n+1}, {p}_h^{n+1}\right)) d s\bigg) \\
{\mathcal{\tilde{G}}}^{\text {LES }}\left(\mathbf{V}_{{h}}, \mathbf{U}_{{h}}\right) \coloneqq & - \sum_{T\in \mathcal{T}_{h}} \left( \nabla \mathbf{v}_h, \mathcal{S}_M(\mathbf{u}^{n+1, \mathrm{EXT}}_h) \mathbf{r}_M(\mathbf{u}^{n+1, \mathrm{EXT}}_h, p^{n+1, \mathrm{EXT}}_h) \otimes \mathcal{S}_M\left(\mathbf{u}^{n+1, \mathrm{EXT}}_h\right) \mathbf{r}_M(\mathbf{u}^{n+1}_h, p^{n+1}_h)\right)
\end{align*}
for all $n \geq \sigma-1$,
  	given $\mathbf{u}^n_h, \ldots, \mathbf{u}^{n+1-\sigma}_h$, where :
  	$$
  	\begin{aligned}
  		\mathbf{u}^{n+1, \mathrm{EXT}}_h=& \begin{cases}\mathbf{u}^n_h & \text {for}\, \sigma=1, \text {if}\, n \geq 0 \\
  		2 \mathbf{u}^n_h-\mathbf{u}^{n-1}_h & \text {for}\, \sigma=2, \text {if}\, n \geq 1\end{cases} \\
  		p^{n+1, \mathrm{EXT}}_h=& \begin{cases}p^n_h & \text {for}\, \sigma=1, \text {if}\, n \geq 0 \\
  		2 p^n_h-p^{n-1}_h & \text {for}\, \sigma=2, \text {if}\, n \geq 1.\end{cases} \\
  		\frac{\partial \mathbf{u}_h}{\partial t} =& \begin{cases} \frac{\mathbf{u}^{n+1}_h - \mathbf{u}^{n}_h}{\Delta t} & \text {for}\, \sigma=1, \text {if}\, n \geq 1 \\
  	\frac{3 \mathbf{u}^{n+1}_h - 4 \mathbf{u}^n_h+ \mathbf{u}^{n-1}_h }{2 \Delta t} & \text {for}\, \sigma=2, \text {if}\, n \geq 2\end{cases} \\
  	\mathbf{u}^{n, \mathrm{BDF} \sigma}_h =& \begin{cases}\mathbf{u}^n_h & \text {for}\, \sigma=1, \text {if}\, n \geq 1 \\
 		2 \mathbf{u}^n_h-\frac{1}{2} \mathbf{u}^{n-1}_h & \text {for}\, \sigma=2, \text {if}\, n \geq 2 \end{cases} \\
   	\alpha_\sigma=& \begin{cases}1 & \text {for}\, \sigma=1 \\
  		\frac{3}{2} & \text {for}\, \sigma=2 \end{cases} \\
  	\mathbf{r}_M(\mathbf{u}, p)= &  \frac{\alpha_{\sigma} \mathbf{u}}{\Delta t}+ \mathbf{u}^{n+1, \mathrm{EXT}}_h \cdot \nabla \mathbf{u}+\nabla p-\nu \Delta \mathbf{u}- \frac{\mathbf{u}^{n, \mathrm{BDF \sigma}}_h}{\Delta t}-\mathbf{f}^{n+1}.
  \end{aligned}
  	$$
  	\begin{remark}
  	Here, we determine the values of the residual and stabilization parameters by substituting the values of $\mathbf{u}^{n+1, \mathrm{EXT}}_h$, $p^{n+1, \mathrm{EXT}}_h$, $\mathbf{u}^{n, \mathrm{BDF} \sigma}_h$ and $\alpha_\sigma$ that we require in our variational formulation.
  	\end{remark}
\end{document}